\documentclass[a4paper,12pt]{amsart}
\setlength{\oddsidemargin}{30mm}
\addtolength{\oddsidemargin}{-1in}
\setlength{\evensidemargin}{30mm}
\addtolength{\evensidemargin}{-1in}
\setlength{\textwidth}{150mm}
\usepackage{amsmath,amssymb,amsthm}
\usepackage{amscd}
\usepackage{array,enumerate}
\newtheorem{Thm}{Theorem}[section]
\newtheorem{Prop}[Thm]{Proposition}
\newtheorem{Lem}[Thm]{Lemma}
\newtheorem{Cor}[Thm]{Corollary}
\newtheorem{Thmint}{Theorem}[section]
\newtheorem{Corint}[Thmint]{Corollary}

\theoremstyle{definition}
\newtheorem{Rem}[Thm]{Remark}

\newtheorem{Def}[Thm]{Definition}

\newcommand{\Cs}{\mbox{${\rm C}^\ast$}}
\newcommand{\id}{\mbox{\rm id}}

\DeclareMathOperator{\bigfp}{\mbox{\lower0.50ex\hbox{\LARGE $\ast$}}}
\setcounter{tocdepth}{1}
\title[Ambient nuclear \Cs -algebras]{Group \Cs -algebras as decreasing intersection of nuclear \Cs -algebras}
\author{Yuhei Suzuki}
\subjclass[2000]{Primary~20F67, Secondary~46L05}
\keywords{Reduced group \Cs -algebras; amenable actions; approximation property.}
\address{Department of Mathematical Sciences,
University of Tokyo, Komaba, Tokyo, 153-8914, Japan}
\address{Research Institute for Mathematical Sciences, Kyoto University, Kyoto, 606-8502, Japan}
\email{suzukiyu@ms.u-tokyo.ac.jp}
\begin{document}
\begin{abstract}
We prove that for every exact discrete group $\Gamma$,
there is an intermediate \Cs -algebra between the reduced group \Cs -algebra and
the intersection of the group von Neumann algebra and the uniform Roe algebra which is realized as the intersection of a decreasing sequence of isomorphs of the Cuntz algebra $\mathcal{O}_2$.
In particular, when $\Gamma$ has the AP (approximation property),
the reduced group \Cs -algebra is realized in this way.
We also study extensions of the reduced free group \Cs -algebras and show
that any exact absorbing or unital absorbing extension of it by any stable separable nuclear \Cs -algebra
is realized in this way.
\end{abstract} 
\maketitle

\section{Introduction}
It is well-known that every exact discrete group admits an amenable action on a compact space \cite{Oz2},
and each such action gives rise to an ambient nuclear \Cs -algebra of the reduced group \Cs -algebra
via the crossed product construction \cite{Ana}.
More generally, it is known that every separable exact \Cs -algebra is embeddable into the Cuntz algebra $\mathcal{O}_2$ \cite{KP}.
Motivated by these phenomena, we are interested in the following question.
How small can we take an ambient nuclear \Cs -algebra/ Cuntz algebra $\mathcal{O}_2$ for a given exact \Cs -algebra?
In the present paper, we give an answer to the question for the reduced group \Cs -algebras of discrete groups with the AP.
The next theorem states that an ambient nuclear \Cs -algebra of the reduced group \Cs -algebras with the AP
can be arbitrarily small in a certain sense.
\begin{Thmint}\label{Thmint:Main}
Let $\Gamma$ be a countable discrete exact group.
Then there is an intermediate \Cs -algebra $A$ between the reduced group ${\rm C}^\ast$-algebra ${\rm C}_r^\ast(\Gamma)$ and the intersection of the group von Neumann algebra $L(\Gamma)$ and
the uniform Roe algebra ${\rm C}^\ast_u(\Gamma)$ satisfying the following properties.
\begin{itemize}
\item
There is a decreasing sequence of isomorphs of the Cuntz algebra $\mathcal{O}_2$ whose intersection is isomorphic to $A$.
\item
There is a decreasing sequence $(A_n)_{n=1}^\infty$ of separable nuclear \Cs -algebras whose intersection
is isomorphic to $A$ and the sequence admits compatible multiplicative conditional expectations
$(E_n\colon A_1\rightarrow A_n)_{n=1}^\infty$.
Here the compatibility means that the equality $E_n\circ E_m=E_n$ holds for all $n\geq m$. 
\end{itemize}
In particular, when the group $\Gamma$ has the AP,
the statements hold for the reduced group \Cs -algebra ${\rm C}_{r}^\ast(\Gamma)$.
\end{Thmint}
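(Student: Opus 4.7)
The plan is to exploit Ozawa's result that an exact $\Gamma$ acts amenably on its Stone--\v{C}ech compactification, so that ${\rm C}^\ast_u(\Gamma) = \ell^\infty(\Gamma) \rtimes_r \Gamma$ is nuclear, and to build $A$ as the intersection of crossed products of a carefully chosen decreasing family of $\Gamma$-invariant subalgebras of $\ell^\infty(\Gamma)$. Concretely, I would construct a decreasing sequence $C_1 \supseteq C_2 \supseteq \cdots$ of separable unital $\Gamma$-invariant \Cs -subalgebras of $\ell^\infty(\Gamma)$ with $\bigcap_n C_n = \mathbb{C}\cdot 1$, and set $A_n := C_n \rtimes_r \Gamma \subseteq {\rm C}^\ast_u(\Gamma)$. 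Each $A_n$ is then separable and nuclear, since the $\Gamma$-action on $\mathrm{Spec}(C_n)$ is a quotient of the amenable action on $\beta\Gamma$; it contains ${\rm C}^\ast_r(\Gamma)$ (because all $\lambda_g$ lie in $A_n$) and sits inside ${\rm C}^\ast_u(\Gamma)$.

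To see that $A := \bigcap_n A_n$ lies in $L(\Gamma)$, I would use the canonical faithful conditional expectation $E_\Gamma : {\rm C}^\ast_u(\Gamma) \to \ell^\infty(\Gamma)$ from the crossed product structure, together with the Fourier coefficients $a_g := E_\Gamma(a \lambda_g^\ast)$. For $a \in A_n$ these coefficients belong to $C_n$, so for $a \in A$ every $a_g$ is a scalar; but an element of ${\rm C}^\ast_u(\Gamma)$ whose Fourier coefficients are all scalar commutes with the right regular representation, and therefore lies in $L(\Gamma) \cap {\rm C}^\ast_u(\Gamma)$. The compatible multiplicative expectations $E_n : A_1 \to A_n$ are produced by first arranging the $C_n$ to be images of commuting $\Gamma$-equivariant $\ast$-homomorphic conditional expectations on $C_1$---for instance, by taking the $C_n$ to be function algebras on a nested family of equivariant retracts of the spectrum of $C_1$---and then extending these maps across the crossed product so that they fix each $\lambda_g$.

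The main obstacle is upgrading the sequence $(A_n)$ to a decreasing sequence of isomorphs of $\mathcal{O}_2$ with the same intersection. The strategy is to strengthen the choice of $C_n$ so that the $\Gamma$-action on $\mathrm{Spec}(C_n)$ is also free and minimal; this makes each $A_n$ a unital Kirchberg algebra, and Kirchberg's absorption $A_n \otimes \mathcal{O}_2 \cong \mathcal{O}_2$ then provides ambient copies of $\mathcal{O}_2$ into which the $A_n$ embed. The Kirchberg--Phillips classification should allow one to nest these ambient copies with intersection exactly $A$; the delicate point is to use approximate unitary equivalence to ensure that the shrinking copies of $\mathcal{O}_2$ intersect down to $A$ rather than to $A \otimes \mathcal{O}_2$, and this is where I expect most of the technical work to lie. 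Finally, for the AP case the Haagerup--Kraus slice map characterization of AP gives $L(\Gamma) \cap {\rm C}^\ast_u(\Gamma) = {\rm C}^\ast_r(\Gamma)$, so $A$ automatically collapses to ${\rm C}^\ast_r(\Gamma)$ and the two bullets apply directly to the reduced group \Cs -algebra.
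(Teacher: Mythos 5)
Your construction of the nuclear sequence and the localization of $A$ inside $L(\Gamma)\cap {\rm C}^\ast_u(\Gamma)$ is essentially the paper's argument: the paper realizes your $C_n$ concretely as $C(\prod_{k=n}^{\infty}X)$ for an amenable compact $\Gamma$-space $X$ (mapped into $\ell^\infty(\Gamma)$ by an orbit map), gets $\bigcap_n C_n=\mathbb{C}$ because a continuous function on the infinite product independent of every coordinate is constant, and obtains the compatible multiplicative expectations from the explicit coordinate-repeating maps $f\mapsto f(x_n,\ldots,x_n,x_{n+1},\ldots)$. Your ``nested equivariant retracts'' is a reasonable abstraction of this, though you should check that the retractions can be chosen compatibly and $\Gamma$-equivariantly while still forcing $\bigcap_n C_n=\mathbb{C}$; the infinite product does both at once.

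The genuine gap is the $\mathcal{O}_2$ bullet, and it is exactly the point you defer to ``approximate unitary equivalence.'' Two problems. First, making the diagonal actions on $\mathrm{Spec}(C_n)$ simultaneously free, minimal, and still satisfying $\bigcap_n C_n=\mathbb{C}$ is not available for a general exact $\Gamma$ (infinite diagonal products are essentially never minimal), and minimality is what you need for simplicity of the crossed product. The paper sidesteps dynamics entirely: it forces simplicity by passing to the reduced free product $C_n:=(B_n,\varphi_n)\ast\bigl(\bigfp_{k=n}^{\infty}(\mathbb{M}_2,\psi)\bigr)$, which is simple by Dykema's theorem and still nuclear by Dykema--Smith, and --- crucially --- the free-product conditional expectations show this does not enlarge the intersection. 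Second, and more fundamentally, once each term absorbs $\mathcal{O}_2$ the naive nesting $A_{n+1}\otimes\mathcal{O}_2\subset A_n\otimes\mathcal{O}_2$ intersects to $A\otimes\mathcal{O}_2$, and no amount of classification or approximate unitary equivalence (which only controls maps up to conjugation, not intersections of concretely nested subalgebras) will shrink this to $A$. The paper's fix is a concrete one you are missing: tensor the $n$th term with the \emph{tail} product $\bigotimes_{k=n}^{\infty}\mathcal{O}_2$, so that the auxiliary factors decrease with $n$; the slice maps $\id\otimes(\bigotimes_{k=1}^{n}\id)\otimes(\bigotimes_{k=n+1}^{\infty}\xi)$ for a state $\xi$ on $\mathcal{O}_2$ converge pointwise to the identity and retract each deeper term onto $C_m$, which yields $\bigcap_n D_n=\bigcap_n C_n=A$. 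Without this (or an equivalent device) the first bullet of the theorem is not proved.
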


As a consequence of Theorem \ref{Thmint:Main}, we obtain the following result.
\begin{Corint}\label{Corint:Main}
The decreasing intersection of nuclear \Cs -algebras need not have the following properties.
\begin{enumerate}[\upshape (1)]
\item The OAP, hence nuclearity, the CBAP, the WEP, and the SOAP.
\item The local lifting property.
\end{enumerate}
They can happen simultaneously.
The statements are true even when the decreasing sequence admits a compatible family of multiplicative conditional expectations.
\end{Corint}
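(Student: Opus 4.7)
The plan is to deduce the corollary from Theorem A applied to a single countable discrete exact group $\Gamma$ whose reduced group ${\rm C}^{\ast}$-algebra and the associated intermediate algebra simultaneously violate all the properties listed. A natural candidate is a higher-rank lattice such as $\Gamma = \mathrm{SL}_{3}(\mathbb{Z})$: this group is exact, has property $(T)$, and fails the AP by Haagerup--de~Laat. Theorem A then produces an intermediate ${\rm C}^{\ast}$-algebra $A$ with ${\rm C}_{r}^{\ast}(\Gamma) \subseteq A \subseteq L(\Gamma) \cap {\rm C}_{u}^{\ast}(\Gamma)$ realized simultaneously as a decreasing intersection of isomorphs of $\mathcal{O}_{2}$ and of separable nuclear ${\rm C}^{\ast}$-algebras with compatible multiplicative conditional expectations; note that $A$ is separable as it is a ${\rm C}^{\ast}$-subalgebra of the first $\mathcal{O}_{2}$ in the sequence. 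It remains to verify that $A$ violates each listed property.

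The failure of the OAP (and hence of nuclearity, the CBAP, and the SOAP) will follow from a bicommutant argument. Since ${\rm C}_{r}^{\ast}(\Gamma)$ is weakly dense in $L(\Gamma)$ and $A$ is squeezed between the two, we have $A'' = L(\Gamma)$ inside $B(\ell^{2}\Gamma)$. Any net of finite-rank completely bounded multipliers witnessing the OAP for $A$ extends normally to one witnessing the weak-$\ast$ OAP for $L(\Gamma)$, which by Haagerup--Kraus is equivalent to $\Gamma$ having the AP---a contradiction. Since nuclearity, the CBAP, and the SOAP all imply the OAP, these three fail as well.

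For the WEP, exactness of $\Gamma$ makes the action of $\Gamma$ on its Stone--\v{C}ech compactification amenable, so ${\rm C}_{u}^{\ast}(\Gamma) = \ell^{\infty}(\Gamma) \rtimes_{r} \Gamma$ is nuclear. Hence $A \subseteq {\rm C}_{u}^{\ast}(\Gamma)$ is exact, and Kirchberg's theorem that a unital separable exact ${\rm C}^{\ast}$-algebra with the WEP is nuclear, combined with the failure of nuclearity above, forces the WEP of $A$ to fail.

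The main obstacle is the LLP, since the exactness-plus-WEP shortcut is unavailable. I would close this gap by exploiting property $(T)$: for $\Gamma$ a non-amenable property $(T)$ group, known rigidity results in the spirit of Ozawa give that ${\rm C}_{r}^{\ast}(\Gamma)$ fails the LLP, and the failure must then be propagated to $A$. The natural route is through Kirchberg's tensor-product characterization: LLP of $A$ would force $A \otimes_{\max} B(H) = A \otimes_{\min} B(H)$, which after passing to the bicommutant and using $A'' = L(\Gamma)$ should contradict the known non-vanishing of the max--min gap for $L(\Gamma) \otimes B(H)$ when $\Gamma$ has $(T)$ and is non-amenable. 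Should a single group not suffice, I would take a direct product $\Gamma_{1} \times \Gamma_{2}$ whose factors jointly exhibit every failure, preserving exactness, and apply Theorem A to the product; the ``can happen simultaneously'' clause of the corollary then follows from the single resulting construction.
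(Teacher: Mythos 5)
Your choice of $\Gamma=\mathrm{SL}_3(\mathbb{Z})$ and the treatment of the OAP and WEP match the paper: the OAP failure is exactly Proposition \ref{Prop:AP} (an intermediate algebra between ${\rm C}^\ast_r(\Gamma)$ and $L(\Gamma)$ with the OAP forces the AP), and the WEP is dispatched by exactness of $A\subset {\rm C}^\ast_u(\Gamma)$ plus ``exact $+$ WEP $\Rightarrow$ nuclear.'' One caveat on your OAP mechanism: the OAP is not witnessed by \emph{multipliers}, and a finite-rank completely bounded map on $A$ need not extend normally to $A''=L(\Gamma)$, so ``extends normally to a weak-$\ast$ OAP for $L(\Gamma)$'' is not a proof; the correct route is the slice-map/Fubini argument behind Proposition \ref{Prop:AP}(4)$\Rightarrow$(1), which you can simply cite. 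Property $(T)$ plays no role anywhere.

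The genuine gap is the LLP. The difficulty is not showing that ${\rm C}^\ast_r(\Gamma)$ fails the LLP (that is elementary for any exact non-amenable group: exactness plus LLP gives the lifting property, and a u.c.p.\ splitting of ${\rm C}^\ast(\Gamma)\to{\rm C}^\ast_r(\Gamma)$ forces amenability); the difficulty is that the LLP does not pass from the subalgebra ${\rm C}^\ast_r(\Gamma)\subset A$ up to $A$, nor from $A$ to its weak closure. Your proposed step --- deducing $L(\Gamma)\otimes_{\max}\mathbb{B}\neq L(\Gamma)\otimes_{\min}\mathbb{B}$ from $A\otimes_{\max}\mathbb{B}=A\otimes_{\min}\mathbb{B}$ ``by passing to the bicommutant'' --- has no valid mechanism: min $=$ max for a weakly dense ${\rm C}^\ast$-subalgebra says nothing about the von Neumann closure (e.g.\ ${\rm C}^\ast(\mathbb{F}_\infty)$ has the LLP while being weakly dense in very non-injective algebras). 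The missing idea in the paper is a \emph{conditional expectation out of} $A$ onto an algebra known to fail the LLP: take $\Lambda\cong\mathrm{SL}(2,\mathbb{Z})\leq\Gamma$, compress by the projection $\ell^2(\Gamma)\to\ell^2(\Lambda)$ to get $E^\Gamma_\Lambda\colon {\rm C}^\ast_u(\Gamma)\to{\rm C}^\ast_u(\Lambda)$ carrying $L(\Gamma)\cap{\rm C}^\ast_u(\Gamma)$ into $L(\Lambda)\cap{\rm C}^\ast_u(\Lambda)$, and use that $\Lambda$ has the AP, hence the ITAP, so $L(\Lambda)\cap{\rm C}^\ast_u(\Lambda)={\rm C}^\ast_r(\Lambda)$. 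This yields a u.c.p.\ projection $\Phi\colon A\to{\rm C}^\ast_r(\Lambda)$; since the LLP passes to ranges of conditional expectations and ${\rm C}^\ast_r(\mathrm{SL}(2,\mathbb{Z}))$ fails the LLP, so does $A$. Without some such expectation your argument does not close.
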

For the implication stated in (1), recall that for exact \Cs -algebras,
nuclearity is equivalent to the WEP \cite[Exercise 2.3.14]{BO}.
Thus the decreasing intersection of nuclear \Cs -algebras can lost most of good properties.
Since the decreasing intersection of injective von Neumann algebras
is injective, the analogous results for von Neumann algebras can never be true.

We also give a geometric construction of a decreasing sequence of Kirchberg algebras whose intersection
is isomorphic to the hyperbolic group \Cs -algebra.
Although the result follows from Theorem \ref{Thmint:Main},
this approach has good points. Our decreasing sequence is taken inside the boundary algebra $C(\partial \Gamma)\rtimes \Gamma$. Moreover, the proof does not depend on Kirchberg--Phillips's $\mathcal{O}_2$-absorption theorem and the theory of reduced free products, both of which are used in the proof of Theorem \ref{Thmint:Main}.
Using the sequence constructed by this method, we also study absorbing extensions of the reduced free group \Cs -algebra by
a stable separable nuclear \Cs -algebras,
and prove the following theorem.
\begin{Thmint}\label{Thmint:NE}
Let $A$ be a stable separable nuclear \Cs -algebra and let
$$0\rightarrow A \rightarrow B\rightarrow {\rm C}_{r}^\ast(\mathbb{F}_d)\rightarrow 0$$ be an extension of ${\rm C}_{r}^\ast(\mathbb{F}_d)$ by $A$ $(2\leq d \leq \infty)$.
Assume $B$ is exact and the extension is either absorbing or unital absorbing.
Then $B$ is realized as a decreasing intersection
of isomorphs of the Cuntz algebra $\mathcal{O}_2$.
In particular, any exact extension of ${\rm C}_{r}^\ast(\mathbb{F}_d)$ by $\mathbb{K}$
is realized in this way.
\end{Thmint}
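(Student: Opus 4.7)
My plan is to combine the geometric decreasing sequence of Kirchberg algebras in the boundary crossed product $C(\partial \mathbb{F}_d) \rtimes \mathbb{F}_d$ (with intersection $C_r^\ast(\mathbb{F}_d)$) with the rigidity of absorbing extensions and the Kirchberg--Phillips classification, in order to build a decreasing sequence of copies of $\mathcal{O}_2$ each containing $B$ and together intersecting at $B$.

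First I would take the geometric decreasing sequence $(D_n)$ of Kirchberg algebras inside $C(\partial \mathbb{F}_d) \rtimes \mathbb{F}_d$ with $\bigcap_n D_n = C_r^\ast(\mathbb{F}_d)$ that the paper constructs for hyperbolic groups. By Kirchberg's $\mathcal{O}_2$-absorption theorem (\emph{i.e.} $\mathcal{O}_2 \otimes D \cong \mathcal{O}_2$ for any unital separable simple nuclear $D$), and possibly after an $\mathcal{O}_2$-tensorial enlargement of the ambient algebra that preserves the intersection, each $D_n$ may be arranged to be isomorphic to $\mathcal{O}_2$.

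Next, using the absorbing (or unital absorbing) hypothesis, I would extend the Busby invariant $\tau \colon C_r^\ast(\mathbb{F}_d) \to M(A)/A$ of the given extension to a coherent sequence of Busby invariants $\tau_n \colon D_n \to M(A)/A$, with $\tau_{n+1}$ restricting to $\tau_n$ on $D_{n+1} \subseteq D_n$ and $\tau_1$ restricting to $\tau$ on $C_r^\ast(\mathbb{F}_d)$. The absorbing property is crucial here: it ensures $\tau$ dominates any nuclear $\ast$-homomorphism into the corona $M(A)/A$ in the Kasparov--Voiculescu sense, allowing an inductive extension of $\tau$ to each nuclear superalgebra $D_n$. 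The resulting family $(B_n)$ contains $B$, is decreasing in $n$, and satisfies $\bigcap_n B_n = B$ by compatibility of the $\tau_n$ together with $\bigcap_n D_n = C_r^\ast(\mathbb{F}_d)$.

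The main obstacle is to identify each $B_n$ with $\mathcal{O}_2$. Taken literally as a Busby pullback, $B_n$ has $A$ as an ideal and therefore cannot be simple, obstructing a direct identification with $\mathcal{O}_2$. The resolution must exploit that $D_n \cong \mathcal{O}_2$ is $KK$-trivial, so every extension of $D_n$ by $A$ has vanishing class in $KK^1(D_n, A)$; combined with the absorbing hypothesis this should permit realizing $B_n$ not as the naive pullback but as an $\mathcal{O}_2$-subalgebra of a suitable ambient Kirchberg algebra (built from an absorbing extension of $C(\partial \mathbb{F}_d) \rtimes \mathbb{F}_d$ by $A$ and its rich $\mathcal{O}_2$-structure) in which the ideal structure of the pullback is dissolved while $B$ is preserved. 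Kirchberg--Phillips classification of Kirchberg algebras by $K$-theory then provides the isomorphism $B_n \cong \mathcal{O}_2$, and the absorbing hypothesis is what ensures these identifications can be made compatibly across the whole sequence.
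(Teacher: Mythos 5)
Your overall architecture (extend the Busby invariant to the nuclear ambient algebras, pull back, then convert the terms to $\mathcal{O}_2$) matches the paper's, but the central step is not justified and one of your choices would actually break it. The claim that the absorbing property alone ``allows an inductive extension of $\tau$ to each nuclear superalgebra $D_n$'' is false: absorption gives uniqueness of absorbing representatives, but to extend $\sigma$ from ${\rm C}^\ast_r(\mathbb{F}_d)$ to a superalgebra $D$ you first need existence of a class in ${\rm Ext}(D,A)$ restricting to $[\sigma]$, i.e.\ surjectivity of $\iota^\ast\colon {\rm Ext}(D,A)\to{\rm Ext}({\rm C}^\ast_r(\mathbb{F}_d),A)^{-1}$. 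This is the technical heart of the paper's proof: one computes $K_\ast(C(\partial\mathbb{F}_d/\mathcal{R}_S)\rtimes\mathbb{F}_d)\cong(\mathbb{Z}^d,(1,\dots,1),\mathbb{Z}^d)$, checks that $\iota_{\ast,0}$ is split injective and $\iota_{\ast,1}$ an isomorphism, and invokes the UCT plus freeness of $K_\ast({\rm C}^\ast_r(\mathbb{F}_d))$. The quotient $\partial\mathbb{F}_d/\mathcal{R}_S$ is chosen precisely to make this work; in $K_0(C(\partial\mathbb{F}_d)\rtimes\mathbb{F}_d)$ the class $[1]_0$ is $(d-1)$-torsion, so the analogous map for the unreduced boundary algebra is not even injective. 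One also needs the Effros--Haagerup lifting theorem (using exactness of $B$) to know $[\sigma]$ is invertible before any of this applies, a point you omit.

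Worse, your plan to first arrange $D_n\cong\mathcal{O}_2$ and then extend the Busby invariant is self-defeating: since $D_n$ is nuclear, ${\rm Ext}(D_n,A)={\rm Ext}(D_n,A)^{-1}\cong KK^1(\mathcal{O}_2,A)=0$, so any Busby invariant defined on such a $D_n$ restricts to a class that is trivial in ${\rm Ext}({\rm C}^\ast_r(\mathbb{F}_d),A)$; a nontrivial $\sigma$ could never extend. The $\mathcal{O}_2$-conversion must happen \emph{after} the pullback, at the level of the non-simple nuclear algebras $B_n=\pi^{-1}(\tilde\sigma(A_n))$, and the paper's mechanism for ``dissolving the ideal structure'' is not $KK$-theoretic at all: it is the reduced free product with copies of $(\mathbb{M}_2,\psi)$ with respect to faithful states (Dykema's simplicity theorem plus Dykema--Smith nuclearity), followed by tensoring with tails $\bigotimes_{k\geq n}\mathcal{O}_2$ and Kirchberg--Phillips absorption; conditional expectations onto the finite free-product stages keep the intersection equal to $B$. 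Your sketch of this step is a statement of hope rather than an argument. Finally, you do not treat the case $d=\infty$ (handled in the paper via the commutator subgroup of $\mathbb{F}_2$) nor the non-unital bookkeeping with corners.
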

The proof of Theorem \ref{Thmint:NE} is based on the KK-theory.
\subsection*{Organization of the paper}
In Section \ref{Sec:Pre},
we review some notions and facts used in the paper.
In Section \ref{Sec:proof}, we prove Theorem \ref{Thmint:Main}.
We also give few more examples satisfying the conditions in Theorem \ref{Thmint:Main}.
In Section \ref{Sec:gen}, we deal with the hyperbolic groups.
Based on the study of the boundary action,
we construct a decreasing sequence of nuclear \Cs -algebras
inside the boundary algebra $C(\partial\Gamma)\rtimes \Gamma$
whose intersection is the reduced group \Cs -algebra ${\rm C}^\ast_{r}(\Gamma)$.
In Section \ref{Sec:NE}, using the decreasing sequence constructed in Section \ref{Sec:gen},
we prove Theorem \ref{Thmint:NE}.
In Section \ref{Sec:app}, we study some amenable dynamical systems of the free groups constructed in Section \ref{Sec:gen}.
\subsection*{Notation}
The symbol `$\otimes$' stands for the minimal tensor product.
The symbol `$\rtimes$' stands for the reduced crossed product of \Cs -algebras.
For a discrete group $\Gamma$ and $g\in \Gamma$,
denote by $\lambda_g$ the unitary element of the reduced group \Cs -algebra ${\rm C}_{r}^\ast(\Gamma)$ corresponding to $g$.
For a unital $\Gamma$-\Cs-algebra $A$ and $g\in \Gamma$,
denote by $u_g$ the canonical implementing unitary element of $g$ in
the reduced crossed product $A\rtimes \Gamma$.
With the same setting, for $x\in A\rtimes \Gamma$ and $g\in \Gamma$,
the $g$th coefficient $E(xu_g^{\ast})$ of $x$ is denoted by
$E_g(x)$. Here $E\colon A\rtimes\Gamma\rightarrow A$ denotes the canonical conditional expectation of the reduced crossed product.
We denote by $\mathbb{K}$ and $\mathbb{B}$
the \Cs -algebras of all compact operators and all bounded operators on $\ell^2(\mathbb{N})$ respectively.
For a set $X$, denote by $\Delta_X$
the diagonal set $\{(x, x):x\in X\}$ of $X\times X$.
For a subset $Y$ of a topological space $X$,
denote by ${\rm int}(Y)$ and ${\rm cl}(Y)$
the interior and the closure of $Y$ in $X$ respectively.

\section{Preliminaries}\label{Sec:Pre}
\subsection{Amenability of group actions on compact spaces and \Cs-algebras}
Recall that an action $\Gamma\curvearrowright X$ of a group $\Gamma$ on a compact Hausdorff space
is said to be amenable if there is a
net $(\zeta_i\colon X \rightarrow {\rm Prob}(\Gamma))_{i\in I}$
of continuous maps satisfying 
\[\lim_{i\in I}\left( \sup_{x\in X}\|g.\zeta_i(x)-\zeta_i(g.x)\|_1 \right)= 0 {\rm\ for\ all\ }g\in \Gamma.\]
Here ${\rm Prob}(\Gamma)$ is the space of probability measures on $\Gamma$ equipped with the pointwise convergence topology.
More generally, an action of $\Gamma$ on a unital \Cs -algebra $A$
is said to be amenable if the induced action of $\Gamma$ on the spectrum of the center $Z(A)$ of $A$ is amenable.
Here we only review a few properties of amenability of group actions.
We refer the reader to \cite[Section 4.3]{BO} for the details.
\begin{Prop}\label{Prop:ame}
Let $\Gamma\curvearrowright A$ be an amenable action of a group $\Gamma$
on a unital \Cs -algebra $A$.
Then the following hold.
\begin{itemize}
\item The full and the reduced crossed products coincide.
\item The $($reduced$)$ crossed product $A\rtimes \Gamma$ is nuclear if and only if $A$ is nuclear.
\end{itemize}
\end{Prop}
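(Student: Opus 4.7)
The plan is to reduce to the commutative case (amenable actions on compact spaces, treated in \cite[Section 4.3]{BO}) by means of a central-valued Schur multiplier construction. Let $X$ be the spectrum of $Z(A)$, so the induced action $\Gamma\curvearrowright X$ is amenable by hypothesis; take a net $\zeta_i\colon X\to\mathrm{Prob}(\Gamma)$ of continuous equivariance-approximating maps. I would lift $(\zeta_i)$ to a net $(\Phi_i)$ of ucp maps on the crossed product whose ranges sit in finite algebraic slices of $A\rtimes \Gamma$, and derive both bullets from the point-norm convergence $\Phi_i\to\mathrm{id}$.

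The construction: let $\xi_i:=\zeta_i^{1/2}\in C(X,\ell^2(\Gamma))$ and define $\varphi_i\colon \Gamma\to Z(A)\cong C(X)$ by $\varphi_i(g)(x):=\langle g.\xi_i(x),\xi_i(x)\rangle$. This is a finitely supported, positive-definite, central-valued function with $\varphi_i(e)=1$ and $\|\varphi_i(g)-1\|\to 0$ for each $g\in\Gamma$. A Fell-absorption argument applied fiberwise over $X$ (using crucially that the values are central) shows that $au_g\mapsto \varphi_i(g)au_g$ extends to ucp maps $\Phi_i$ on both $A\rtimes\Gamma$ and $A\rtimes_r\Gamma$, intertwined by the canonical surjection $\pi\colon A\rtimes\Gamma\to A\rtimes_r\Gamma$. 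Finite support of $\varphi_i$ implies that these maps factor through the algebraic slice $\bigoplus_{g\in F_i}Au_g$, which embeds identically into both completions.

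With this in hand, the first bullet follows from the intertwining: since $\Phi_i^{\mathrm{full}}\to\mathrm{id}$ on the full crossed product and $\pi$ is isometric on the common finite slice, $\pi$ is a ucp isometry and hence an isomorphism. For the second bullet, if $A$ is nuclear then $\bigoplus_{g\in F_i}Au_g$ is nuclear (a finite direct sum of copies of $A$), so the $\Phi_i$ witness the CPAP for $A\rtimes_r\Gamma$; conversely, nuclearity descends to $A$ via the inclusion $A\hookrightarrow A\rtimes_r\Gamma$ and the canonical faithful conditional expectation $E\colon A\rtimes_r\Gamma\to A$.

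The main obstacle is the verification that $\varphi_i$ induces a ucp multiplier on the \emph{full} crossed product, which is what drives the first bullet. This rests on the centrality of the values of $\varphi_i$, permitting a fiberwise Stinespring dilation over $X$ that globalizes via the $C(X)$-algebra structure on $A\rtimes\Gamma$. Once this is in place, the remainder is routine point-norm bookkeeping and the standard CPAP characterization of nuclearity.
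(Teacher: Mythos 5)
The paper itself offers no proof of this proposition; it is quoted from \cite[Section 4.3]{BO}, and your outline follows exactly the argument given there (central-valued positive definite functions $h_i(g)=\langle g.\xi_i,\xi_i\rangle$ built from $\xi_i=\zeta_i^{1/2}$, inducing ucp Schur multipliers converging point-norm to the identity). However, one step in your derivation of the first bullet is genuinely wrong as stated: you claim that the finite algebraic slices $\bigoplus_{g\in F_i}Au_g$ ``embed identically into both completions'' and that $\pi$ is therefore isometric on them. The full and reduced norms do \emph{not} agree on finitely supported elements in general --- already for $A=\mathbb{C}$ and $\Gamma=\mathbb{F}_2$ with the trivial action, the element $\lambda_a+\lambda_a^{\ast}+\lambda_b+\lambda_b^{\ast}$ has norm $4$ in ${\rm C}^\ast(\mathbb{F}_2)$ but $2\sqrt{3}$ in ${\rm C}^\ast_r(\mathbb{F}_2)$ --- and asserting it in the amenable setting is circular, since coincidence of the two norms is exactly what the first bullet claims. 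The correct route (and the one in \cite{BO}) is to show that each multiplier $\Phi_i^{\mathrm{full}}$ on the full crossed product \emph{factors through} $\pi$: the fiberwise Stinespring/Fell-absorption dilation of $h_i$ produces a ccp map $\psi_i\colon A\rtimes \Gamma\rightarrow A\rtimes\Gamma$ defined on the \emph{reduced} completion with $\Phi_i^{\mathrm{full}}=\psi_i\circ\pi$. Then $\ker\pi\subset\bigcap_i\ker\Phi_i^{\mathrm{full}}=\{0\}$ because $\Phi_i^{\mathrm{full}}\rightarrow\mathrm{id}$ in point-norm, so $\pi$ is injective and the two crossed products coincide.

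A smaller but related imprecision occurs in the second bullet: $\bigoplus_{g\in F_i}Au_g$ is only an operator subspace, not a \Cs -subalgebra, so ``it is nuclear'' is not a statement of the right type. The standard fix is to observe that $\Phi_i$ factors by ccp maps through $\mathbb{M}_{F_i}\otimes A$ (compress to $\ell^2(F_i)$ in the regular representation and map back using $\xi_i$), which is a genuine \Cs -algebra and is nuclear when $A$ is; this yields the CPAP for $A\rtimes\Gamma$. Your converse direction, deducing nuclearity of $A$ from the faithful conditional expectation $E\colon A\rtimes\Gamma\rightarrow A$, is fine.
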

\subsection{Approximation properties for \Cs -algebras and groups}
For \Cs -algebras $A$, $B$ and a closed subspace $X$ of $B$,
we define a subspace $F(A, B, X)\subset A\otimes B$
by
$$F(A, B, X):=\{x\in A\otimes B: (\varphi\otimes \id_B)(x)\in X{\rm\ for\ all\ }\varphi\in A^\ast\}.$$
A triplet $(A, B, X)$ is said to have the slice map property if the equality
$F(A, B, X)=A\otimes X$ holds.
Here $A\otimes X$ denotes the closed subspace of $A\otimes B$ spanned by elements of the form
$a\otimes x; a\in A, x\in X$.
We give a definition of the SOAP (strong operator approximation property) and the OAP (operator approximation property) in terms of the slice map property. See \cite[Section 12.4]{BO} for the detail.
\begin{Def}
A \Cs -algebra $A$ is said to have the SOAP (resp.\ the OAP) if for any \Cs -algebra $B$ (resp.\ for $B=\mathbb{K}$)
and for any closed subspace $X$ of $B$,
the triplet $(A, B, X)$ has the slice map property.
\end{Def}
Obviously, we have the implications
\begin{center}
Nuclearity $\Rightarrow$ CBAP $\Rightarrow$ SOAP$\Rightarrow$ OAP, Exactness.
\end{center}
All implications are known to be proper.
However, for the reduced group \Cs -algebras, the SOAP and the OAP are equivalent.
The SOAP and the OAP have the strong connection with the property of groups called the AP (approximation property).
Here we give the following equivalent condition as a definition of the AP.
\begin{Def}\label{Def:AP}
A discrete group $\Gamma$ is said to have the AP if there exists a net $(\varphi_i)_{i\in I}$
of finitely supported complex valued functions on $\Gamma$ such that
$m_{\varphi_i}\otimes \id_{\mathbb{B}}$ converges to the identity map
in the pointwise norm topology.
Here, for a finitely supported function $\varphi$ on $\Gamma$,
denote by $m_\varphi\colon {\rm C}^\ast_{r}(\Gamma) \rightarrow {\rm C}^\ast_{r}(\Gamma)$
the completely bounded map defined by the formula
$m_\varphi(\lambda_g):= \varphi(g)\lambda_g$ for $g\in \Gamma$.
\end{Def}
This property is characterized in the following way.
\begin{Prop}\label{Prop:AP}
Let $\Gamma$ be a discrete group.
Then the following are equivalent.
\begin{enumerate}[\upshape (1)]
\item The group $\Gamma$ has the AP.
\item The \Cs -algebra ${\rm C}^\ast_{r}(\Gamma)$ has the SOAP.
\item The \Cs -algebra  ${\rm C}^\ast_{r}(\Gamma)$ has the OAP.
\item There is an intermediate \Cs -algebra between ${\rm C}^\ast_{r}(\Gamma)$ and $L(\Gamma)$ which has the SOAP or the OAP.

\end{enumerate}

\end{Prop}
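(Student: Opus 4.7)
The plan is to establish the cycle $(1) \Rightarrow (2) \Rightarrow (3) \Rightarrow (1)$, which is the Haagerup--Kraus characterization of the AP, and to deal with the new intermediate-algebra condition $(4)$ via the trivial direction $(2) \Rightarrow (4)$ (take $A := {\rm C}^\ast_r(\Gamma)$) together with $(4) \Rightarrow (3)$; the implication $(2) \Rightarrow (3)$ is definitional.

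For $(1) \Rightarrow (2)$, I would exploit the Fourier decomposition of elements of ${\rm C}^\ast_r(\Gamma) \otimes B$. Given an AP net $\varphi_i$ and $y \in F({\rm C}^\ast_r(\Gamma), B, X)$, each Fourier coefficient functional $E_g$ is bounded on ${\rm C}^\ast_r(\Gamma)$, so $(E_g \otimes \id_B)(y) \in X$. Therefore
\[
(m_{\varphi_i} \otimes \id_B)(y) = \sum_{g \in \mathrm{supp}(\varphi_i)} \varphi_i(g)\, \lambda_g \otimes (E_g \otimes \id_B)(y) \in {\rm C}^\ast_r(\Gamma) \otimes X,
\]
and the point-norm convergence supplied by the AP forces $y \in {\rm C}^\ast_r(\Gamma) \otimes X$. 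The deep direction $(3) \Rightarrow (1)$ is the main content of Haagerup--Kraus and I would quote it: the slice map property with $B = \mathbb{K}$ produces finite-rank CB approximants on ${\rm C}^\ast_r(\Gamma)$, which are then converted into finitely supported Herz--Schur multipliers by pairing with the canonical trace.

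For $(4) \Rightarrow (3)$, let $A$ be an intermediate \Cs -algebra with OAP (the SOAP case is a fortiori stronger). Given a closed $X \subset \mathbb{K}$ and $y \in F({\rm C}^\ast_r(\Gamma), \mathbb{K}, X)$, every $\psi \in A^\ast$ restricts to a functional on ${\rm C}^\ast_r(\Gamma)$, and the equality $(\psi \otimes \id)(y) = (\psi|_{{\rm C}^\ast_r(\Gamma)} \otimes \id)(y) \in X$ holds because $y$ sits in the isometric inclusion ${\rm C}^\ast_r(\Gamma) \otimes \mathbb{K} \subset A \otimes \mathbb{K}$. Hence $y \in F(A, \mathbb{K}, X) = A \otimes X$ by the OAP of $A$. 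The main obstacle is the concluding step: deducing $y \in {\rm C}^\ast_r(\Gamma) \otimes X$ from $y \in (A \otimes X) \cap ({\rm C}^\ast_r(\Gamma) \otimes \mathbb{K})$; a direct quotient-map argument via $\mathbb{K} \to \mathbb{K}/X$ combined with injectivity of the minimal tensor product only loops back to the starting slice map condition. To break out, I would lift the OAP data from $A$ to the ambient $L(\Gamma)$, exploiting the weak-* density of $A$ in $L(\Gamma)$ and weak-* compactness to produce normal finite-rank maps on $L(\Gamma)$ witnessing its w*-analogue of the OAP, a condition equivalent to the AP of $\Gamma$ again by Haagerup--Kraus.
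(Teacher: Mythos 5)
Your cycle $(1)\Rightarrow(2)\Rightarrow(3)\Rightarrow(1)$ matches the paper, which simply cites \cite[Section 12.4]{BO} for these equivalences; your Fourier-coefficient argument for $(1)\Rightarrow(2)$ is correct and is the same computation the paper carries out (for general crossed products) in Proposition \ref{Prop:AP2}. The issue is condition $(4)$, the only part of the statement beyond Haagerup--Kraus, and there your argument has a genuine gap. You rightly observe that the direct route $(4)\Rightarrow(3)$ collapses: from $y\in(A\otimes X)\cap({\rm C}^\ast_r(\Gamma)\otimes\mathbb{K})$ one cannot conclude $y\in{\rm C}^\ast_r(\Gamma)\otimes X$, since that intersection question is itself a slice-map problem. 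But the repair you propose --- upgrading the OAP of the intermediate algebra $A$ to the w*-OAP of $L(\Gamma)$ ``by weak-$\ast$ density and weak-$\ast$ compactness'' --- is not a proof; it is precisely the hard step. The finite-rank maps witnessing the OAP of $A$ have the form $\sum_k\omega_k(\cdot)\,a_k$ with $\omega_k\in A^\ast$ arbitrary bounded functionals: these do not extend to normal functionals on $L(\Gamma)$, and after any replacement by normal ones there is no reason the resulting net converges in the stable point-weak$\ast$ topology. No compactness argument hands you this for free, and the passage from the OAP of a norm-closed algebra to the w*-OAP of its weak closure is exactly the kind of statement that, in this circle of ideas, is proved by going through the AP itself.

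The intended argument --- the paper says only that ``$(4)\Rightarrow(1)$ follows from the proof of $(2),(3)\Rightarrow(1)$'' --- is to re-run the Haagerup--Kraus proof rather than quote its statement. In that proof one approximates the identity stably in point-norm by finite-rank maps $\phi=\sum_k\omega_k(\cdot)\,a_k$ and associates to each the function $\varphi(g)=\sum_k\omega_k(\lambda_g)\,\tau(a_k\lambda_g^\ast)$ on $\Gamma$, then shows the multipliers $m_\varphi$ converge in the required sense. The only structural inputs are that the range elements $a_k$ lie in $L(\Gamma)$, so that their Fourier coefficients $\tau(a_k\lambda_g^\ast)$ are defined, and that the functionals $\omega_k$ can be evaluated on the unitaries $\lambda_g$; both hold verbatim for any intermediate algebra ${\rm C}^\ast_r(\Gamma)\subseteq A\subseteq L(\Gamma)$. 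So the correct fix is not to transfer the OAP from $A$ to $L(\Gamma)$, but to observe that the proof of $(3)\Rightarrow(1)$ never used $A={\rm C}^\ast_r(\Gamma)$ in the first place.
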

See \cite[Section 12.4]{BO} for the proof.
Note that the implication (4)$\Rightarrow$(1) follows from the proof of (2), (3)$\Rightarrow$ (1).

Next recall that the group von Neumann algebra of a group $\Gamma$
is the von Neumann algebra $L(\Gamma)$ generated by the image of the left regular representation of $\Gamma$.
We also recall that the uniform Roe algebra of $\Gamma$ is the \Cs -algebra ${\rm C}^\ast_u(\Gamma)$
on $\ell^2(\Gamma)$ generated by $\ell^\infty(\Gamma)$ and the reduced group \Cs -algebra of $\Gamma$.
Note that both algebras contain the reduced group \Cs -algebra by definition.
A group $\Gamma$ is said to have the ITAP (invariant translation approximation property \cite{Roe})
if we have the equality
$$L(\Gamma)\cap {\rm C}^\ast_u(\Gamma)= {\rm C}^\ast_{r}(\Gamma).$$
Here the above equality makes sense by regarding all algebras as \Cs -algebras on $\ell^2(\Gamma)$
in the canonical way.
We remark that under the canonical isomorphism
${\rm C}^{\ast}_u(\Gamma)\cong \ell^{\infty}(\Gamma)\rtimes \Gamma$ (cf. Proposition 5.1.3 in \cite{BO}), the intersection
$L(\Gamma)\cap {\rm C}^\ast_u(\Gamma)$ is identified with
the \Cs -subalgebra of $\ell^{\infty}(\Gamma)\rtimes \Gamma$ consisting of elements whose coefficients sit in $\mathbb{C}$.
It is shown by Zacharias \cite{Zac} that every group with the AP has the ITAP.
See also Proposition \ref{Prop:AP2}. We do not know either the ITAP holds or not for groups without the AP.

\subsection{Reduced free product}
We refer the reader to \cite[Section 4.7]{BO} for the definition of the reduced free product.
First we recall a few terminology related to theorems we will use.
Let $A$ be a \Cs -algebra and $\varphi$ be a state on $A$.
Recall that $\varphi$ is said to be non-degenerate if its GNS-representation is faithful.
Recall that the centralizer of $\varphi$ is
the set of all elements $a\in A$ satisfying the equality
$\varphi(ab)=\varphi(ba)$ for all $b\in A$.
An abelian \Cs -subalgebra $D$ of $A$ is said to be diffuse with respect to $\varphi$
if $\varphi|_D$ is a diffuse measure on the spectrum of $D$.

In the proofs of Theorems \ref{Thmint:Main} and \ref{Thmint:NE}, we use the reduced free product to make \Cs -algebras simple.
The following two theorems are important in our proof.
The first theorem guarantees the nuclearity of the reduced free product under certain conditions.
The second one gives a sufficient condition for the simplicity of the reduced free product.
\begin{Thm}[Dykema--Smith {\cite[Exercise 4.8.2]{BO}}]\label{Thm:DS}
Let $(A, \varphi)$ be a pair of a unital nuclear \Cs -algebra and a non-degenerate state on $A$.
Let $\psi$ be a pure state on the matrix algebra $\mathbb{M}_n$ $(n \geq 2)$.
Then the reduced free product
$(A, \varphi)\ast (\mathbb{M}_n, \psi)$ is nuclear.
\end{Thm}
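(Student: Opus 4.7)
The plan is to reduce the nuclearity of the reduced free product to that of a corner, and then identify the corner with an explicit nuclear \Cs -algebra.

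Since $\psi$ is pure on $\mathbb{M}_n$, it has the form $\psi(\,\cdot\,) = \langle \,\cdot\, \xi,\xi\rangle$ for some unit vector $\xi \in \mathbb{C}^n$, and the rank-one projection $p := \xi\xi^* \in \mathbb{M}_n$ satisfies $p \mathbb{M}_n p = \mathbb{C} p$. Set $C := (A,\varphi) \ast (\mathbb{M}_n,\psi)$. Since $\mathbb{M}_n$ sits unitally in $C$ and is simple, $p$ is a full projection in $C$. Fixing matrix units $(e_{ij})_{i,j=1}^n$ of $\mathbb{M}_n$ with $e_{11}=p$, the standard corner-stabilisation trick provides a canonical isomorphism $C \cong \mathbb{M}_n \otimes pCp$. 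As tensoring with $\mathbb{M}_n$ preserves nuclearity, the problem reduces to showing that $pCp$ is nuclear.

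To analyse $pCp$ I would work with the GNS representation of $(C,\varphi\ast\psi)$ on the reduced free Fock space
\[
\mathcal{F} \;=\; \mathbb{C}\Omega \;\oplus\; \bigoplus_{k \geq 1} \bigoplus_{\iota_1 \neq \cdots \neq \iota_k} \mathring{H}_{\iota_1} \otimes \cdots \otimes \mathring{H}_{\iota_k},
\]
where $\iota_j \in \{A,B\}$ alternate, $\mathring{H}_A$ is the orthogonal complement of the cyclic vector in the GNS space of $(A,\varphi)$, and $\mathring{H}_B := \xi^\perp \cong \mathbb{C}^{n-1}$; non-degeneracy of $\varphi$ makes this representation faithful. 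Under this realisation the operator $p$ is the orthogonal projection onto the sum of summands whose leftmost tensor factor does not lie in $\mathring{H}_B$. A direct bookkeeping argument using this description shows that $pCp$ is generated by a copy of $A$ together with the partial isometries $v_i := p\, e_{i1}$ ($2 \leq i \leq n$), subject to the relations $v_i^* v_j = \delta_{ij}\, p$ and to left/right $A$-actions on $\mathrm{span}\{v_i\}$ dictated by the freeness of $A$ and $\mathbb{M}_n$ in $C$. These are exactly the defining relations of the Toeplitz--Pimsner \Cs -algebra $\mathcal{T}_E$ over $A$ associated to a finitely generated Hilbert $A$-bimodule $E$ of rank $n-1$. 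Nuclearity of $A$ then propagates to $\mathcal{T}_E$ via Pimsner's theorem, giving nuclearity of $pCp$ and therefore of $C$.

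The main obstacle is the identification of $pCp$ with the Toeplitz--Pimsner algebra $\mathcal{T}_E$: it requires verifying both the bimodule structure on $\mathrm{span}\{v_i\}$ and the absence of extra relations beyond those imposed by the Fock model. This is precisely the step where the purity of $\psi$ is indispensable: the identity $p \mathbb{M}_n p = \mathbb{C} p$ prevents $pCp$ from degenerating into an amalgamated free product over a nontrivial finite-dimensional subalgebra --- in which case nuclearity would no longer be automatic --- and forces the purely ``Toeplitz/free'' structure that Pimsner's framework absorbs.
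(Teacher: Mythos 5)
First, note that the paper does not actually prove this theorem: it is quoted as a black box from Brown--Ozawa, Exercise 4.8.2 (attributed to Dykema--Smith), so the only meaningful comparison is with the standard argument behind that citation. Your overall strategy --- pass to the corner $pCp$ of the rank-one support projection of $\psi$, use $C\cong\mathbb{M}_n\otimes pCp$, and identify the corner with a Toeplitz--Pimsner algebra over $A$ --- is indeed that standard route (the Dykema--Shlyakhtenko/Dykema--Smith picture), and the full-corner reduction as well as your description of the action of $p$ on the free Fock space are correct; non-degeneracy of $\varphi$ is used exactly where you use it.

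The execution, however, breaks down at the step you yourself flag as ``the main obstacle,'' and not merely for lack of detail. Concretely: (i) with $e_{11}=p$ one has $v_i:=p\,e_{i1}=e_{11}e_{i1}=0$ for $i\geq 2$, so your proposed generators vanish; the nontrivial generators of $pCp$ are elements of the form $e_{1i}\,a\,e_{j1}$ with $a\in A$, and on $p\mathcal{F}$ the elements $p\,a\,e_{i1}$ act as creation operators of the vectors $\mathring{a}\otimes e_i$ in $\bigl(L^2(A,\varphi)\ominus\mathbb{C}\bigr)\otimes\mathbb{C}^{n-1}$. (ii) There is no evident ``copy of $A$'' inside $pCp$: since $A$ and $\mathbb{M}_n$ are free, the compression $a\mapsto pap$ is not multiplicative (one checks that $pa(1-p)bp$ acts nontrivially on vectors $\eta_1\otimes r$ with $\eta_1\in\mathring{H}_A$ and $r$ nonempty), so locating the coefficient algebra is precisely the nontrivial content of the identification rather than something that can be read off. (iii) Even granting the identification, the correspondence is not of rank $n-1$ over $A$: the one-particle space of $p\mathcal{F}$ is $\bigl(L^2(A,\varphi)\ominus\mathbb{C}\bigr)\otimes\mathbb{C}^{n-1}\otimes L^2(A,\varphi)$, so the relevant module is built on $\bigl(L^2(A,\varphi)\ominus\mathbb{C}\bigr)\otimes\mathbb{C}^{n-1}$ and is typically of infinite rank. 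In short, the surrounding reductions (corner trick, faithfulness of the Fock representation, nuclearity of Toeplitz--Pimsner algebras over nuclear coefficients) are sound, but the central identification is both left unproved and set up with incorrect generators, so the argument as written does not close.
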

\begin{Thm}[Dykema {\cite[Theorem 2]{Dy}}]\label{Thm:Dy}
Let $(A, \varphi)$ and $(B, \psi)$ be pairs of a unital \Cs -algebra and a non-degenerate state on it.
Assume that $B\neq \mathbb{C}$ and
the centralizer of $\varphi$ contains a diffuse abelian \Cs -subalgebra $D$
containing the unit of $A$.
Then the reduced free product $(A, \varphi)\ast (B, \psi)$ is simple.
\end{Thm}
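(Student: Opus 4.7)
The plan is to prove simplicity of $M := (A,\varphi)\ast(B,\psi)$ via a Powers-type averaging argument, exploiting the Haar unitaries available in $D$. Concretely, I would show that for any nonzero positive element $x\in M$, one can find finitely many $c_1,\dots,c_N\in M$ with $\|\mathbf{1}-\sum_i c_i^\ast x c_i\|<1$, which forces the two-sided ideal generated by $x$ to contain the unit and hence to equal $M$.

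First, since $D$ is diffuse, I would pick a Haar unitary $u\in D$, that is, a unitary with $\varphi(u^n)=0$ for all $n\neq 0$; this is possible after replacing $D$ by a subalgebra isomorphic to $L^\infty(\mathbb{T},\mathrm{Haar})$. Because $D$ lies in the centralizer of $\varphi$, conjugation by $u$ preserves $\varphi$ on $A$. Since $B\neq\mathbb{C}$ and $\psi$ is non-degenerate, choose $b_0\in B$ so that $b:=b_0-\psi(b_0)\mathbf{1}\neq 0$; then $b\in\ker\psi$ is nonzero. The family $\{u^n b u^{-n}\}_{n\in\mathbb{Z}}$ then consists of reduced words of length three in $A^\circ\cup B^\circ$, pairwise orthogonal in the GNS representation of $\tau:=\varphi\ast\psi$.

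The heart of the proof is an averaging lemma: for any element $y\in M$ with $\tau(y)=0$ lying in the dense $\ast$-subalgebra of finite sums of reduced words, the averages
\[
\Big\|\frac{1}{N}\sum_{n=1}^N u^n y u^{-n}\Big\| \longrightarrow 0 \quad\text{as }N\to\infty.
\]
The proof rests on the Fock-space model of the reduced free product: conjugation by powers of $u$ scatters the spectral mass of the leading and trailing letters of $y$ across the $\varphi$-orthogonal spectrum of $u$, and one estimates the operator norm of the average by a direct computation with creation/annihilation operators. Granting the lemma, the conclusion is quick: for nonzero $x\geq 0$, faithfulness of $\tau$ on $M$ gives $\tau(x)>0$; writing $x=\tau(x)\mathbf{1}+y$ with $\tau(y)=0$, the averages $\frac{1}{N}\sum_{n=1}^N u^n x u^{-n}$ lie in the two-sided ideal generated by $x$ and, by the averaging lemma, converge in norm to the invertible element $\tau(x)\mathbf{1}$.

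The main obstacle is the norm estimate in the averaging lemma. Orthogonality of the family $\{u^n y u^{-n}\}$ in the GNS Hilbert space does not by itself imply that the $C^\ast$-norm of the average vanishes, so one must upgrade $L^2$-orthogonality to an operator-norm estimate, either via a Haagerup-type inequality or by direct Fock-space estimates. A case analysis on the first and last letters of the reduced expansion of $y$ (whether they lie in $A$ or $B$, and how they interact with $u$ under reduction) is the combinatorial crux; once that is handled, simplicity follows from the Powers-type scheme outlined above.
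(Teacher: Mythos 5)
This theorem is quoted in the paper from Dykema's article without proof, so there is no internal argument to compare against; your Powers--Dixmier averaging strategy is indeed the one used in the literature (Avitzour in the presence of enough centered unitaries on both sides, Dykema under the present weaker hypotheses). However, your central averaging lemma is false as stated. If $y$ lies in the relative commutant of $u$ inside $A$ --- for instance $y=u$ itself, which satisfies $\tau(u)=\varphi(u)=0$ --- then $u^{n}yu^{-n}=y$ for every $n$, so $\frac{1}{N}\sum_{n=1}^{N}u^{n}yu^{-n}=y$ does not tend to $0$ in any topology. More generally, conjugation by powers of a single Haar unitary $u\in D\subset A$ cannot ``scatter'' the part of $y$ supported in $A$, because $A$ occupies a single letter slot of the free product: for $a\in A^{\circ}$ the element $u^{n}au^{-n}$ is again just an element of $A$, and even the claimed $L^{2}$-orthogonality fails. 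This is precisely where the hypothesis $B\neq\mathbb{C}$ must enter. You introduce a centered element $b\in B^{\circ}$ and correctly observe that the words $u^{n}bu^{-n}$ are genuinely spread out, but you never use $b$ to repair the lemma. The actual argument first reduces, modulo errors that stay inside the ideal generated by $x$, to elements that are sums of reduced words beginning and ending with letters from $B^{\circ}$ (for example by multiplying on both sides by words built from $b$ and $u$), and only then does conjugation by $u^{n}$ produce the orthogonality needed for a Haagerup-type operator-norm estimate of order $N^{-1/2}$. Without this reduction the scheme collapses already for $x=1+\mathrm{Re}(u)$.

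A second, smaller gap: the free product state $\tau=\varphi\ast\psi$ is non-degenerate (its GNS representation is faithful) but need not be faithful as a state --- $\psi$ could be a pure state on $\mathbb{M}_{2}$, say --- so you cannot conclude $\tau(x)>0$ from $x>0$. The standard fix is to replace $x$ by $a^{\ast}xa$ for a suitable $a$ in the algebraic free product, chosen via faithfulness of the GNS representation so that $\tau(a^{\ast}xa)>0$; this element still lies in the two-sided ideal generated by $x$, and the averaging scheme can then be run on it.
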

A good aspect of these theorems is that we only need to force a condition on one of the states.
Thus we can apply these theorems at the same time in many situations. 

\subsection{Extensions of \Cs -algebras}
Here we recall basic facts and terminologies related to the extensions of \Cs -algebras.
We refer the reader to \cite[Sections 15, 17]{Bla} for the details.
Let $A$ be a unital separable \Cs -algebra,
$B$ be a separable stable (i.e., $B\cong B\otimes \mathbb{K}$) nuclear \Cs -algebra.
Let $$0\rightarrow B\rightarrow C\rightarrow A\rightarrow 0$$ be an essential extension of $A$ by $B$.
Here essential means that the ideal $B$ of $C$ is essential (i.e., $cB=0$ implies $c=0$ for $c\in C$).

Let $\sigma\colon A \rightarrow Q(B):=M(B)/B$ be the Busby invariant of the above extension.
Here $M(B)$ denotes the multiplier algebra of $B$.
As usual, we identify an extension with its Busby invariant.
To define the addition of two extensions, we fix an isomorphism
$B\cong B\otimes \mathbb{K}$.
(Note that
up to canonical identifications, the choice of the isomorphism does not affect
to the following definitions.)
Then any isomorphism $\mathbb{K}\cong \mathbb{M}_2(\mathbb{K})$
induces the isomorphism $Q(B)\cong \mathbb{M}_2(Q(B))$.
Using this isomorphism, we can identify the direct sum of two Busby invariants
with a Busby invariant. This is independent on the choice of the isomorphism
$\mathbb{K}\cong \mathbb{M}_2(\mathbb{K})$ up to strong equivalence (defined below).

An extension $\sigma$ is said to be trivial (resp.~strongly unital trivial) if it has a $\ast$-homomorphism (resp.~unital $\ast$-homomorphism) lifting
$\tilde{\sigma}\colon A\rightarrow M(B)$.
Two extensions $\sigma_1$ and $\sigma_2$ are said to be strongly equivalent
if there is a unitary element $u$ in $M(B)$ satisfying
${\rm ad}(\pi(u))\circ \sigma_1=\sigma_2$.
An extension $\sigma$ is said to be absorbing (resp.~unital absorbing)
if for any trivial extension (resp.~strongly unital trivial extension) $\tau$,
$\sigma\oplus \tau$ is strongly equivalent to $\sigma$.
On the class of extensions of $A$ by $B$,
we define an equivalence relation as follows.
Two extensions $\sigma_1$ and $\sigma_2$ are equivalent
if there are trivial extensions $\tau_1$ and $\tau_2$
such that the direct sums $\sigma_i\oplus \tau_i$ are
strongly equivalent.
The quotient ${\rm Ext}(A, B)$ of the class of all extensions by this equivalence relation
naturally becomes an abelian semigroup.

Kasparov showed that there exists a unital absorbing strongly unital trivial extension $\tau$ of $A$ by
$B$ \cite[Theorem 6]{Kas}.
Therefore any $[\sigma]\in {\rm Ext}(A, B)$ has a unital absorbing representative.
Moreover, if $[\sigma]$ contains a unital extension,
then $[\sigma]$ has a unital absorbing unital representative.
Note that an element $[\sigma]\in {\rm Ext}(A, B)$ contains a unital extension
if and only if $[\sigma(1)]_0=0$ in $K_0(Q(B))$.

A theorem of Kasparov \cite[Theorem 2]{Kas} shows that for a unital absorbing extension $\sigma$, the direct sum $\sigma\oplus 0$
is an absorbing extension.
Thus, by the same reason as above, any element of ${\rm Ext}(A, B)$ has
an absorbing representative. By definition, such a representative is unique up to strongly equivalence.

It follows from \cite[Theorem 6]{Kas} that for any unital \Cs -subalgebra $C\subset A$,
the restriction of the absorbing (resp.~ unital absorbing) extension to $C$ again has the same property.

Let ${\rm Ext}(A, B)^{-1}$ be the subsemigroup of ${\rm Ext}(A, B)$
consisting of invertible elements.
Then there is a natural group isomorphism between ${\rm Ext}(A, B)^{-1}$ and 
$KK^1(A, B)$ \cite[Corollary 18.5.4]{Bla}.
Note that thanks to Kasparov's Stinespring type theorem \cite{Kas},
an extension $\sigma \colon A\rightarrow Q(B)$ is invertible in ${\rm Ext}(A, B)$ if and only if
it has a completely positive lifting $\tilde{\sigma} \colon A \rightarrow M(B)$.
See Section 15.7 in \cite{Bla} for details.

\section{Proof of Theorem \ref{Thmint:Main}}\label{Sec:proof}
Let $\Gamma$ be a countable exact group.
Take an amenable action $\Gamma\curvearrowright X$ on a compact metrizable space.
Define $A_n:=C(\prod_{k=n}^{\infty} X)\rtimes \Gamma$ for each $n\in \mathbb{N}$.
Here the action $\Gamma\curvearrowright \prod_{k=n}^{\infty} X$ is given by the diagonal action.
We regard $A_{n+1}$ as a \Cs -subalgebra of $A_n$ in the canonical way.
Since the $\Gamma$-space $\prod_{k=n}^{\infty} X$ is metrizable and amenable, each $A_n$ is separable and nuclear.
Put $A:=\bigcap_{n=1}^\infty A_n$.
We will show that
$A$ is isomorphic to an intermediate \Cs -algebra between ${\rm C}_{r}^\ast(\Gamma)$ and ${\rm C}^\ast_u(\Gamma)\cap L(\Gamma)$.
To see this, take an arbitrary point $x\in \prod_{k=1}^{\infty} X$
and define $\rho\colon C(\prod_{k=1}^{\infty} X)\rightarrow \ell^{\infty}(\Gamma)$ by
$\rho(f)(s):=f(s.x)$ for $f\in C(\prod_{k=1}^{\infty} X)$ and $s\in \Gamma$.
Then $\rho$ is a $\Gamma$-equivariant $\ast$-homomorphism.
Hence it induces a $\ast$-homomorphism $\tilde{\rho}\colon A_1 \rightarrow \ell^{\infty}(\Gamma)\rtimes \Gamma$.
Note that $E_g(a)\in \bigcap_{n=1}^\infty C(\prod_{k=n}^{\infty} X)$ for all $a\in A$ and $g\in \Gamma$.
We claim that the equality $\bigcap_{n=1}^\infty C(\prod_{k=n}^{\infty} X)=\mathbb{C}$ holds.
Indeed, any function contained in $\bigcap_{n=1}^\infty C(\prod_{k=n}^{\infty} X)$ is independent
on each coordinates. Such a continuous function must be constant. This proves the claim.
From this, it follows that $\tilde{\rho}$ is injective on $A$ and that the image $\tilde{\rho}(A)$ is contained in ${\rm C}^\ast_u(\Gamma)\cap L(\Gamma)$.
Thus $A$ is isomorphic to the desired \Cs -algebra.

Next we show that there is a compatible family of multiplicative conditional expectations
$(E_n\colon A_1\rightarrow A_n)_{n=1}^\infty$.
Define $E_n\colon A_1 \rightarrow A_n$ to be the $\ast$-homomorphism induced from the $\Gamma$-equivariant $\ast$-homomorphism
$$E_n\colon C(\prod_{k=1}^{\infty} X)\rightarrow C(\prod_{k=n}^{\infty} X)$$
defined by
$$E_n(f)(x_n, x_{n+1}, x_{n+2}, \ldots):=f(x_n, \ldots, x_n, x_{n+1}, x_{n+2}, \ldots),$$
where, in the right hand side, $x_n$ is iterated $n$ times.
Then it is not difficult to check that they satisfy the desired conditions.

To make terms isomorphic to the Cuntz algebra $\mathcal{O}_2$,
we first make terms simple.
To do this, take a faithful state $\nu$ on $A_1$.
Take a compact metric space $Y$ consisting at least two points and a faithful measure $\mu$ on $Y$.
On $\left(\bigotimes_{k=1}^\infty C(Y)\right)\otimes A_1$,
define a faithful state $\varphi$ by $\varphi:=\left(\bigotimes_{k=1}^\infty \mu \right)\otimes \nu$.
Then define a faithful state $\varphi_n$ on $B_n:=\left(\bigotimes_{k=n}^\infty C(Y)\right)\otimes A_n$
to be the restriction of $\varphi$.
Now take a pure state $\psi$ on $\mathbb{M}_2$
and put $C_n:= (B_n, \varphi_n)\ast\left( \bigfp_{k=n}^{\infty} (\mathbb{M}_2, \psi) \right)$.
Then by Theorem \ref{Thm:Dy},
each $C_n$ is simple.
Moreover, since $C_n$ is the increasing union of finite free products $\left((B_n, \varphi_n)\ast\left( \bigfp_{k=n}^m (\mathbb{M}_2, \psi) \right)\right)_{m=n}^\infty$,
each $C_n$ is nuclear by Theorem \ref{Thm:DS}.
For each $n\in \mathbb{N}$, by Theorem 4.8.5 in \cite{BO}, we have a conditional expectation
from $C_1$ onto $(B_1, \varphi)\ast\left( \bigfp_{k=1}^n (\mathbb{M}_2, \psi) \right)$ which maps $C_{n+1}$ onto $B_{n+1}$.
This proves the equalities
$$\bigcap_{n=1}^\infty C_n = \bigcap_{n=1}^\infty B_n=\bigcap_{n=1}^\infty A_n=A.$$

Finally, to make terms isomorphic to $\mathcal{O}_2$,
we apply Kirchberg--Phillips's $\mathcal{O}_2$-absorption theorem \cite{KP}.
We define a new sequence $(D_n)_{n=1}^\infty$
by $D_n:= C_n\otimes \left( \bigotimes_{k=n}^{\infty}\mathcal{O}_2 \right).$
Then each $D_n$ is isomorphic to $\mathcal{O}_2$.
Take a state $\xi$ on $\mathcal{O}_2$.
For each $n\in \mathbb{N}$, set
$E_n:= \id_{C_1} \otimes \left( \bigotimes _{k=1} ^n \id_{\mathcal{O}_2}\right) \otimes \left( \bigotimes _{k=n+1}^\infty \xi \right).$
Then $E_n$ maps $D_m$ onto $C_m$ for $m>n$ and
the sequence $(E_n)_{n=1}^\infty$ converges to the identity in the pointwise norm topology.
Therefore we have
$$\bigcap_{n=1}^\infty D_n =\bigcap_{n=1}^\infty C_n = A.$$
\qed
\begin{Rem}
There is an isomorphism between the decreasing intersection $A=\bigcap_{n\in \mathbb{N}} \left(C(\prod_{k=n}^{\infty} X)\rtimes \Gamma\right)$
and the \Cs -algebra
$$B=\{ b\in C(X)\rtimes \Gamma: E_g(b)\in \mathbb{C}{\rm\ for\ all\ }g\in \Gamma\}$$
that preserves the reduced group \Cs -algebra.
To see this, consider the quotient map $\pi \colon C(\prod_{k=1}^{\infty} X)\rtimes \Gamma \rightarrow C(X)\rtimes \Gamma$ induced from the diagonal embedding $X\rightarrow \prod_{k=1} ^\infty X$.
Note that $\pi$ is compatible with the coefficient maps and preserves the reduced group \Cs -algebra.
Since all elements of $A$ take the coefficients in $\mathbb{C}$,
we have the injectivity of $\pi$ on $A$.
To see the equality $\pi(A)=B$, for each $n\in \mathbb{N}$, consider the embedding
$\rho_n \colon C(X)\rtimes \Gamma \rightarrow C(\prod_{k=n}^{\infty} X)\rtimes \Gamma$ induced from the quotient map from $\prod_{k=1}^{\infty} X$ onto the $n$th product component.
Then, by comparing the coefficients, we have $\rho_n|_B=\rho_m|_B$ for all $n, m\in \mathbb{N}$.
Hence $\rho_1(B)=\bigcap_{n=1}^\infty \rho_n(B)\subset A$.
By comparing the coefficients, we further have $B=\pi(\rho_1(B))\subset \pi(A)$.
The converse inclusion is obvious by the definition of $B$.

Therefore, the question either the equation
$$\bigcap_{n\in \mathbb{N}}\left( C(\prod_{k=n}^{\infty} X)\rtimes \Gamma\right)= {\rm C}^\ast_r (\Gamma)$$ holds or not
seems difficult when the group $\Gamma$ does not have the AP.
Indeed, if the equation holds for every compact metrizable $\Gamma$-space $X$
(when $\Gamma$ is exact, we only need to consider the amenable one),
then $\Gamma$ has the ITAP.
However, we do not know either a given group has the ITAP or not for groups without the AP.
\end{Rem}
Now we can prove Corollary \ref{Corint:Main}.
\begin{proof}[Proof of Corollary \ref{Corint:Main}]
We apply Theorem \ref{Thmint:Main} to $\Gamma:={\rm SL}(3, \mathbb{Z})$.
(See \cite[Section 5.4]{BO} for the exactness of $\Gamma$.)
This gives an intermediate \Cs -algebra $A$ between ${\rm C}^\ast_{r}(\Gamma)$ and $L(\Gamma)\cap {\rm C}^\ast_u(\Gamma)$
satisfying the conditions in Theorem \ref{Thmint:Main}.
We show that $A$ does not have the OAP and the local lifting property.
Since $\Gamma$ does not have the AP \cite{LS},
Proposition \ref{Prop:AP} yields that $A$ does not have the OAP.

Next take a subgroup $\Lambda$ of $\Gamma$ isomorphic
to ${\rm SL}(2, \mathbb{Z})$.
Denote by $p$ the orthogonal projection from $\ell^2(\Gamma)$
onto the subspace $\ell^2(\Lambda)$.
Then the compression by $p$
gives a conditional expectation
$$E_{\Lambda}^\Gamma\colon {\rm C}^\ast_u(\Gamma)\rightarrow {\rm C}^\ast_u(\Lambda).$$
It is clear from the definition that
$E_{\Lambda}^\Gamma$ maps $L(\Gamma)\cap {\rm C}^\ast_u(\Gamma)$
onto $L(\Lambda)\cap {\rm C}^\ast_u(\Lambda)$.
Since $\Lambda$ has the AP \cite[Corollary 12.3.5]{BO}, we obtain the conditional expectation
$$\Phi\colon A\rightarrow {\rm C}^\ast_{r}(\Lambda).$$
Since ${\rm C}^\ast_{r}(\Lambda)$ does not have the local lifting property \cite[Corollary 3.7.12]{BO},
neither does $A$.
\end{proof}
\subsection*{Other examples}
We end  this section by giving few more examples satisfying the conditions in Theorem \ref{Thmint:Main}.
\begin{Prop}\label{Prop:NC}
Let $A$ be a unital separable nuclear \Cs -algebra, $\Gamma$ be a group with the AP.
Then for any action of $\Gamma$ on $A$,
the reduced crossed product $A\rtimes \Gamma$ satisfies the conditions
mentioned in Theorem \ref{Thmint:Main}.
\end{Prop}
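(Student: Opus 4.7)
The plan is to mirror the construction in the proof of Theorem \ref{Thmint:Main}, replacing $C(\prod X)\rtimes\Gamma$ with its $A$-amplification. Since the AP implies exactness, I can choose an amenable action $\Gamma\curvearrowright X$ on a compact metrizable space. Set $A_n:=(A\otimes C(\prod_{k=n}^\infty X))\rtimes\Gamma$, equipping $A\otimes C(\prod_{k=n}^\infty X)$ with the diagonal $\Gamma$-action (the given action on $A$ combined with the diagonal action on $\prod X$), and regard $A_{n+1}\subset A_n$ canonically. Each $A_n$ is separable since $A$ is separable and $\prod_{k=n}^\infty X$ is compact metrizable. The diagonal action on $A\otimes C(\prod_{k=n}^\infty X)$ is amenable because the induced action on the spectrum of the center factors through the amenable factor $\prod_{k=n}^\infty X$, so each $A_n$ is nuclear by Proposition \ref{Prop:ame}.

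The core step is to show $\bigcap_{n\geq 1}A_n=A\rtimes\Gamma$. The inclusion $A\rtimes\Gamma\subset A_n$ is immediate. For the converse, take $x\in\bigcap_n A_n$. Since $A$ is nuclear, the slice map property yields
\[
E_g(x)\in\bigcap_{n\geq 1}\bigl(A\otimes C(\textstyle\prod_{k=n}^\infty X)\bigr)=A\otimes\bigcap_{n\geq 1}C(\textstyle\prod_{k=n}^\infty X)=A
\]
for every $g\in\Gamma$, where the innermost intersection collapses to $\mathbb{C}$ as in the proof of Theorem \ref{Thmint:Main}. Using the AP of $\Gamma$, I take a net $(\varphi_i)$ of finitely supported functions on $\Gamma$ for which the multipliers $M_{\varphi_i}\colon A_1\to A_1$ determined by $M_{\varphi_i}(au_g):=\varphi_i(g)au_g$ are completely bounded and converge pointwise in norm to the identity; this is the AP-version of Zacharias's argument for the ITAP, applied to the crossed product $A_1$ in place of $\ell^\infty(\Gamma)\rtimes\Gamma$. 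Each $M_{\varphi_i}(x)=\sum_{g\in\mathrm{supp}(\varphi_i)}\varphi_i(g)E_g(x)u_g$ is a finite sum with all coefficients in $A$, hence lies in $A\rtimes\Gamma$; passing to the norm limit gives $x\in A\rtimes\Gamma$.

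The remaining ingredients transfer verbatim from the proof of Theorem \ref{Thmint:Main}. Compatible multiplicative conditional expectations $E_n\colon A_1\to A_n$ arise from the $\Gamma$-equivariant ``repeat the first coordinate'' $\ast$-homomorphisms tensored with $\id_A$. The free product step---forming $B_n:=(\bigotimes_{k=n}^\infty C(Y))\otimes A_n$ with a faithful diagonal product state, free-multiplying with $\bigfp_{k=n}^\infty(\mathbb{M}_2,\psi)$, and invoking Theorems \ref{Thm:DS} and \ref{Thm:Dy}---makes each term simple, separable, and nuclear while preserving the intersection. Finally, tensoring each term with copies of $\mathcal{O}_2$ and applying Kirchberg--Phillips's $\mathcal{O}_2$-absorption theorem yields a decreasing sequence of isomorphs of $\mathcal{O}_2$ with intersection $A\rtimes\Gamma$.

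The principal obstacle is justifying the extension of the AP multipliers $M_{\varphi_i}$ to $A_1$ together with pointwise-norm convergence to the identity. In the scalar case $A=\mathbb{C}$ this is exactly Zacharias's argument that the AP implies the ITAP. In the general setting one has to embed $A_1$ faithfully into a suitable ambient algebra containing $C^\ast_r(\Gamma)\otimes\mathbb{B}(H)$ via the covariant regular representation, and apply $m_{\varphi_i}\otimes\id_{\mathbb{B}(H)}$; verifying that the restriction to $A_1$ coincides with $M_{\varphi_i}$ and that the pointwise convergence passes through is routine but requires some care, particularly because the $\Gamma$-action on $A$ is non-trivial.
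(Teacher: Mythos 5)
Your argument is correct and follows essentially the same route as the paper: replace $C(\prod_{k=n}^{\infty}X)$ by $A\otimes C(\prod_{k=n}^{\infty}X)$ with the diagonal action, use nuclearity (the slice map property) to see that the coefficients of an element of the intersection lie in $A$, and then use the AP to conclude membership in $A\rtimes\Gamma$. The multiplier step you single out as the principal obstacle is precisely Proposition \ref{Prop:AP2}, which the paper has already proved (via the Fell absorption embedding $\iota\colon A_1\rightarrow A_1\otimes {\rm C}^\ast_r(\Gamma)$), so you may simply cite it rather than reconstruct it.
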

Let $A$ be a unital \Cs -algebra. Let $\Gamma$ be a group and $S$ be a $\Gamma$-set.
Consider the reduced crossed product 
$A^{\otimes S}\rtimes \Gamma$
where $\Gamma$ acts on $A^{\otimes S}$
by the shift of tensor components.
We say it the generalized wreath product
of $A$ with respect to $S$ and denote it by $A\wr_S \Gamma$. We assume that $\Gamma$ and $S$ are countable.
\begin{Prop}\label{Prop:Perm}
The class of unital $\Cs$-algebras with the SOAP satisfying the conditions in Theorem \ref{Thmint:Main} is closed under taking the following operations.
\begin{enumerate}[\upshape (1)]
\item
The generalized wreath product with respect to any $\Gamma$-set with $\Gamma$ the AP.
\item Countable minimal tensor products.
\end{enumerate}
\end{Prop}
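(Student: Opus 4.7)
The plan is to imitate the proof of Theorem \ref{Thmint:Main}: build a decreasing sequence of separable nuclear \Cs-algebras with compatible multiplicative conditional expectations whose intersection is the target algebra, then force simplicity by taking the reduced free product of each term with an $(\mathbb{M}_2,\psi)$-factor, and finally upgrade terms to copies of $\mathcal{O}_2$ by tensoring with $\mathcal{O}_2$. Once the nuclear sequence is in place, these last two steps transfer verbatim from the proof of Theorem \ref{Thmint:Main}, so the real work is producing the nuclear sequence with the required CEs and verifying the SOAP of the target.

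For (2), write each $B^{(i)}=\bigcap_n A_n^{(i)}$ with compatible multiplicative conditional expectations $E_n^{(i)}\colon A_1^{(i)}\to A_n^{(i)}$. First I would set $\widetilde{A}_n:=\bigotimes_{i=1}^{\infty}A_n^{(i)}$ (minimal tensor product, via unital embeddings): each $\widetilde{A}_n$ is separable and nuclear, the sequence is clearly decreasing, and the tensored maps $\widetilde{E}_n:=\bigotimes_{i} E_n^{(i)}$ form a compatible family of multiplicative conditional expectations from $\widetilde{A}_1$ onto $\widetilde{A}_n$. The SOAP of $\bigotimes_{i} B^{(i)}$ is inherited from the SOAP of each $B^{(i)}$, because SOAP is preserved under the minimal tensor product and under inductive limits with injective connecting maps. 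For the key identity $\bigcap_n\widetilde{A}_n=\bigotimes_{i}B^{(i)}$ I would first approximate a given $x\in\bigcap_n\widetilde{A}_n$ by an element $y\in\bigotimes_{i=1}^{N}A_1^{(i)}$, then slice off the factors $i>N$ by a fixed state to obtain a map $P\colon\widetilde{A}_{1}\to\bigotimes_{i=1}^{N}A_{1}^{(i)}$ satisfying $P(\widetilde{A}_n)\subset\bigotimes_{i=1}^{N}A_n^{(i)}$ and $\|x-P(x)\|\le 2\|x-y\|$. This reduces to the finite-factor identity $\bigcap_n\bigotimes_{i=1}^{N}A_n^{(i)}=\bigotimes_{i=1}^{N}B^{(i)}$, which follows from iterated slice map arguments using the nuclearity of each $A_n^{(i)}$.

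For (1), since the AP forces exactness, I would fix an amenable action $\Gamma\curvearrowright X$ on a compact metrizable space and set
\[
\widetilde{A}_n:=\left(A_n^{\otimes S}\otimes C\Bigl(\textstyle\prod_{k=n}^{\infty}X\Bigr)\right)\rtimes\Gamma,
\]
with $\Gamma$ acting diagonally---on $A_n^{\otimes S}$ by the shift induced from $\Gamma\curvearrowright S$ and on $C(\prod_{k\ge n}X)$ by the product of the amenable action on $X$. Nuclearity of $\widetilde{A}_n$ follows from Proposition \ref{Prop:ame}: the action on the spectrum of the center of the coefficient algebra factors through the amenable action on $\prod_{k\ge n}X$, hence is amenable. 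The sequence is decreasing, and the identity $\bigcap_n\widetilde{A}_n=A\wr_S\Gamma$ I would verify coefficient-wise via the canonical maps $E_g$: each coefficient of an element of the intersection lies in $\bigcap_n(A_n^{\otimes S}\otimes C(\prod_{k\ge n}X))$, which by a two-slot slice-map argument equals $A^{\otimes S}\otimes\mathbb{C}=A^{\otimes S}$, using part (2) to collapse the first slot and the constant-function argument of Theorem \ref{Thmint:Main} to collapse the second. Compatible multiplicative CEs on $\widetilde{A}_n$ are assembled from the given $E_n$, the projections $C(\prod_{k\ge 1}X)\to C(\prod_{k\ge n}X)$ used in Theorem \ref{Thmint:Main}, and crossed-product functoriality. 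Finally, the SOAP of $A\wr_S\Gamma$ follows from (2) together with the fact that the reduced crossed product by an AP group preserves the SOAP.

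The hard part throughout is making the infinite-factor/infinite-coefficient versions of the slice map identities rigorous: the two-factor Fubini-type arguments are routine, but pushing approximations to the infinite setting, and interacting cleanly with the coefficient maps of the reduced crossed product, requires careful orchestration of approximations on finite tensor supports, the compatible conditional expectations, and the SOAP-based slice map property. This triple interplay is the technical heart of the argument.
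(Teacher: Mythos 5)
Your proposal is correct and follows essentially the same route as the paper: for (2) the paper likewise tensors the given nuclear sequences together and reduces the intersection identity to finitely many tensor factors via conditional expectations built from a fixed state, and for (1) it uses exactly the coefficient algebras $C(\prod_{k=n}^{\infty}X)\otimes A_n^{\otimes S}$ together with Proposition \ref{Prop:AP2} to pass from coefficient-wise membership to membership in $A^{\otimes S}\rtimes\Gamma$. One small correction: in the finite-factor identity $\bigcap_n\bigotimes_{i=1}^{N}A_n^{(i)}=\bigotimes_{i=1}^{N}B^{(i)}$, nuclearity of the $A_n^{(i)}$ only supplies the slice map property in the first slot, while the second slicing step requires the slice map property of a triplet whose first algebra is the (generally non-nuclear) intersection $B^{(i)}$ --- this is precisely where the SOAP hypothesis on the class enters, and it is not a consequence of nuclearity as your sketch suggests.
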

We note that the second condition in Theorem \ref{Thmint:Main} implies the first one
by the proof of Theorem \ref{Thmint:Main}.
Therefore to prove these propositions, we only need to check the second condition in Theorem \ref{Thmint:Main}.
To prove Propositions \ref{Prop:NC} and \ref{Prop:Perm}, we need the following proposition.
The idea of the proof is essentially contained in \cite{Zac}.
\begin{Prop}\label{Prop:AP2}
Let $\Gamma$ be a group with the AP.
Let $A$ be a $\Gamma$-${\rm C}^\ast$-algebra and let $X$ be a closed subspace of $A$.
Assume that an element $x\in A\rtimes \Gamma$ satisfies
$E_g(x) \in X$ for all $g\in \Gamma$.
Then $x$ is contained in the closed subspace $$X\rtimes \Gamma:=\overline{\rm span}\{xu_g: x\in X, g\in \Gamma\}.$$
Conversely, if the above implication always holds for any $\Gamma$-\Cs -algebra and its closed subspace,
then the group $\Gamma$ has the AP.
\end{Prop}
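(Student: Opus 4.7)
The plan is to treat the two directions separately: the forward direction passes through a Fell absorption trick to reduce to the AP on $\mathbb{B}\otimes{\rm C}^\ast_{r}(\Gamma)$, while the converse specializes the hypothesis to the trivial $\Gamma$-action and reads off the SOAP of ${\rm C}^\ast_{r}(\Gamma)$.

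For the forward direction, fix a net $(\varphi_i)_{i\in I}$ of finitely supported functions witnessing the AP. Each $\varphi_i$ determines a bounded Fourier multiplier $m^A_{\varphi_i}\colon A\rtimes\Gamma\to A\rtimes\Gamma$ with $m^A_{\varphi_i}(au_g)=\varphi_i(g)au_g$, and under the coefficient hypothesis $m^A_{\varphi_i}(x)=\sum_{g\in{\rm supp}(\varphi_i)}\varphi_i(g)E_g(x)u_g$ automatically lies in $X\rtimes\Gamma$. It therefore suffices to prove $m^A_{\varphi_i}(x)\to x$ in norm. To do so I would realize $A\rtimes\Gamma$ inside $\mathbb{B}(H)\otimes{\rm C}^\ast_{r}(\Gamma)$ by Fell absorption: embed $A$ faithfully in $\mathbb{B}(H_0)$, set $H:=H_0\otimes\ell^2(\Gamma)$, and form the regular covariant representation $\pi_0(a)\xi(g):=(g^{-1}.a)\xi(g)$, $U_g:=1\otimes\lambda_g$. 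A direct computation shows $\pi_0$ is $\Gamma$-equivariant for the inner action $\ad(U)$ on $\mathbb{B}(H)$, and Fell absorption identifies $\mathbb{B}(H)\rtimes_{\ad(U)}\Gamma\cong\mathbb{B}(H)\otimes{\rm C}^\ast_{r}(\Gamma)$ via $bv_g\mapsto bU_g\otimes\lambda_g$. Composing with $\pi_0\rtimes\Gamma$ produces an embedding $\iota\colon A\rtimes\Gamma\hookrightarrow\mathbb{B}(H)\otimes{\rm C}^\ast_{r}(\Gamma)$ satisfying $\iota\circ m^A_{\varphi_i}=(\id_{\mathbb{B}(H)}\otimes m_{\varphi_i})\circ\iota$. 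When $A$ is separable, $H$ is separable, $\mathbb{B}(H)\cong\mathbb{B}$, and the AP delivers pointwise norm convergence of $\id_{\mathbb{B}(H)}\otimes m_{\varphi_i}$ to the identity, in particular on $\iota(A\rtimes\Gamma)$. For general $A$, fix $x$ and place it inside the reduced crossed product of a separable $\Gamma$-invariant \Cs-subalgebra $A'\subset A$, which embeds isometrically in $A\rtimes\Gamma$; the separable case then applies.

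For the converse, specialize the hypothesis to the trivial $\Gamma$-action on an arbitrary \Cs-algebra $B$. Then $B\rtimes\Gamma\cong B\otimes{\rm C}^\ast_{r}(\Gamma)$ and, for any closed subspace $X\subset B$, $X\rtimes\Gamma\cong X\otimes{\rm C}^\ast_{r}(\Gamma)$; the coefficient $E_g$ becomes $\id_B\otimes\tau_g$ with $\tau_g:=\tau(\lambda_g^\ast\,\cdot\,)\in{\rm C}^\ast_{r}(\Gamma)^\ast$, where $\tau$ is the canonical trace. Given $y\in{\rm C}^\ast_{r}(\Gamma)\otimes B$ with $(\varphi\otimes\id_B)(y)\in X$ for every $\varphi\in{\rm C}^\ast_{r}(\Gamma)^\ast$, in particular every Fourier coefficient $E_g(y)$ lies in $X$; the hypothesis then forces $y\in X\otimes{\rm C}^\ast_{r}(\Gamma)$. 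This is exactly the slice-map property for $({\rm C}^\ast_{r}(\Gamma),B,X)$; arbitrariness of $B$ and $X$ yields the SOAP for ${\rm C}^\ast_{r}(\Gamma)$, and Proposition~\ref{Prop:AP} concludes the AP of $\Gamma$.

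The main obstacle is the norm convergence step in the forward direction. The AP in its stated form does not provide uniform cb bounds on the multipliers $m^A_{\varphi_i}$, so a naive density argument from finite sums to arbitrary elements is unavailable; Fell absorption is exactly the device that trades the $\Gamma$-action on $A$ for an inner action on an ambient $\mathbb{B}(H)$, so that the tensor-product form of the AP becomes directly applicable on the crossed-product side.
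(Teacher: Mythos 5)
Your proof is correct and follows essentially the same route as the paper: the forward direction uses the same Fourier multipliers $\Phi_i(y)=\sum_g\varphi_i(g)E_g(y)u_g$ and the same Fell-absorption intertwining $\iota\circ\Phi_i=(\id\otimes m_{\varphi_i})\circ\iota$ (the paper embeds into $(A\rtimes\Gamma)\otimes{\rm C}^\ast_r(\Gamma)$ rather than $\mathbb{B}(H)\otimes{\rm C}^\ast_r(\Gamma)$, a cosmetic difference, and leaves the separable reduction implicit), and the converse is the identical specialization to the trivial action yielding the slice map property for $({\rm C}^\ast_r(\Gamma),B,X)$ and hence the SOAP.
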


\begin{proof}
Since $\Gamma$ has the AP, there is a net
$(\varphi_i)_{i \in I}$ of finitely supported functions on $\Gamma$ satisfying the condition in Definition \ref{Def:AP}.
For $i \in I$, define the linear map
$\Phi_i \colon A\rtimes\Gamma\rightarrow A\rtimes \Gamma$
by $\Phi_i (y):=\sum_{g\in \Gamma} \varphi_i(g)E_g(y)u_g$.
We claim that the net $(\Phi_i)_{i \in I}$ converges to the identity map
in the pointwise norm topology.
To show this, consider the embedding
$\iota \colon A\rtimes\Gamma \rightarrow (A\rtimes\Gamma)\otimes {\rm C}_{r}^\ast(\Gamma)$
induced from the maps $a\in A\mapsto a\otimes 1$ and
$u_g\in \Gamma \mapsto u_g \otimes \lambda_g$.
(This indeed defines an embedding by Fell's absorption principle \cite[Prop.4.1.7]{BO}.)
Then the composite $\iota \circ \Phi_i$
coincides with the composite $({\rm id}_{A \rtimes \Gamma} \otimes m_{\varphi_i})\circ \iota$.
This proves the convergence condition.
Now let $x$ be as stated.
Then for any $i\in I$,
we have $\Phi_i(x) \in X\rtimes\Gamma$.
Since the net $(\Phi_i(x))_{i \in I}$ converges in norm to $x$,
we have $x \in X\rtimes \Gamma$.

To show the converse, for any \Cs -algebra $A$,
consider the trivial $\Gamma$-action on $A$.
Then we have an isomorphism $\varphi\colon A\rtimes \Gamma \cong {\rm C}^\ast_{r}(\Gamma) \otimes A$
satisfying $\varphi(au_g)=\lambda_g \otimes a$ for $a\in A$ and $g\in \Gamma$.
For any closed subspace $X$ of $A$, the isomorphism $\varphi$
maps $X\rtimes \Gamma$ onto ${\rm C}^\ast_{r}(\Gamma)\otimes X$.
Now take an element $x\in F({\rm C}^\ast_{r}(\Gamma), A, X)$.
To show the AP of $\Gamma$, by Proposition \ref{Prop:AP}, it suffices to show $x\in {\rm C}^\ast_{r}(\Gamma)\otimes X$.
For any $g\in \Gamma$, let $\tau_g$ be the bounded linear functional on ${\rm C}^\ast_{r}(\Gamma)$
satisfying $\tau_g(\lambda_h):=\delta_{g, h}$ for $h\in \Gamma$.
Then by the choice of $x$, we have $(\tau_g\otimes \id_A)(x)\in X$ for all $g\in \Gamma$.
Since $E_g(\varphi^{-1}(x))=(\tau_g\otimes \id_A)(x)$,
we conclude that $\varphi^{-1}(x)\in X\rtimes \Gamma$.
Thus we have $x \in {\rm C}^\ast_{r}(\Gamma) \otimes X$ as desired.
\end{proof}
As a consequence, we obtain a permanence property of the SOAP and the OAP.
\begin{Cor}\label{Cor:AP}
The SOAP and the OAP are preserved under taking the reduced crossed product of a group with the AP.
\end{Cor}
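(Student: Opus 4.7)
The plan is to reduce the corollary to a direct combination of the slice map property of $A$ and Proposition \ref{Prop:AP2}. I treat the SOAP; the OAP argument is identical after specializing to $B=\mathbb{K}$. So let $B$ be an arbitrary ${\rm C}^\ast$-algebra, $X\subseteq B$ a closed subspace, and fix $x\in F(A\rtimes\Gamma,B,X)$. The goal is to show $x\in(A\rtimes\Gamma)\otimes X$.

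The first step is bookkeeping: identify $(A\rtimes\Gamma)\otimes B$ with $(A\otimes B)\rtimes\Gamma$, where $\Gamma$ acts trivially on the second tensor factor. This is standard for the reduced crossed product (an application of Fell's absorption principle, used in the proof of Proposition \ref{Prop:AP2}). Under this identification, the subspace $(A\rtimes\Gamma)\otimes X$ corresponds to $(A\otimes X)\rtimes\Gamma$, and the coefficient map at $g\in\Gamma$ on $(A\otimes B)\rtimes\Gamma$ becomes $E_g^{A\rtimes\Gamma}\otimes\id_B$.

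Next I would verify that the coefficients of $x$ lie in $A\otimes X$. For any $\psi\in A^\ast$, the functional $\psi\circ E_g^{A\rtimes\Gamma}$ is bounded on $A\rtimes\Gamma$, hence
\[(\psi\otimes\id_B)(E_g(x))=\bigl((\psi\circ E_g^{A\rtimes\Gamma})\otimes\id_B\bigr)(x)\in X\]
by the assumption $x\in F(A\rtimes\Gamma,B,X)$. Thus $E_g(x)\in F(A,B,X)$, and invoking the SOAP of $A$ yields $E_g(x)\in A\otimes X$ for every $g\in\Gamma$.

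Finally I would apply Proposition \ref{Prop:AP2} to the $\Gamma$-${\rm C}^\ast$-algebra $A\otimes B$ (with trivial $\Gamma$-action on $B$) and the closed subspace $A\otimes X$: since all coefficients of $x$ lie in $A\otimes X$ and $\Gamma$ has the AP, we conclude $x\in(A\otimes X)\rtimes\Gamma=(A\rtimes\Gamma)\otimes X$. No step presents a genuine obstacle: the hard content is already packaged into the SOAP of $A$ and Proposition \ref{Prop:AP2}, and the only care required is to correctly identify the algebras and the coefficient maps across the two pictures of $(A\rtimes\Gamma)\otimes B$.
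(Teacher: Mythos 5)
Your proposal is correct and follows essentially the same route as the paper: identify $(A\rtimes\Gamma)\otimes B$ with $(A\otimes B)\rtimes\Gamma$, observe that the coefficients $(E_g\otimes\id_B)(x)$ lie in $F(A,B,X)\subseteq A\otimes X$ by the SOAP of $A$, and then conclude with Proposition \ref{Prop:AP2}. The extra detail you supply (the computation $(\psi\otimes\id_B)(E_g(x))=((\psi\circ E_g)\otimes\id_B)(x)$) is exactly the verification the paper leaves implicit.
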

\begin{proof}
We only give a proof for the SOAP.
Let $A$ be a $\Gamma$-\Cs -algebra with the SOAP.
Let $B$ be a \Cs -algebra and $X$ be its closed subspace.
To show the SOAP of $A\rtimes \Gamma$, it suffices to prove the inclusion $F(A\rtimes \Gamma, B, X)\subset (A\rtimes\Gamma)\otimes X$.
Let $x\in F(A\rtimes \Gamma, B, X)$.
Then $(E_g\otimes \id_B)(x) \in F(A, B, X)$ for all $g\in \Gamma$.
Since $A$ has the SOAP, we have $F(A, B, X)\subset A\otimes X$.
Then from Proposition \ref{Prop:AP2}, we conclude $x\in (A\rtimes \Gamma)\otimes X$.
Here we use the canonical identification of $(A\rtimes \Gamma) \otimes B$ with $(A\otimes B)\rtimes \Gamma$.
\end{proof}
\begin{Rem}
The similar proofs also show the {\rm W}$^\ast$-analogues of
Proposition \ref{Prop:AP2} and Corollary \ref{Cor:AP}.
We note that the {\rm W}$^\ast$-analogue of Corollary \ref{Cor:AP} is shown by
Haagerup and Kraus for locally compact groups with the AP \cite[Theorem 3.2]{HK}.
\end{Rem}

\begin{proof}[Proof of Proposition \ref{Prop:NC}]
Replace $C(\prod_{k=n}^{\infty}X)$ by $C(\prod_{k=n}^{\infty}X)\otimes A$ with the diagonal $\Gamma$-action
in the proof of Theorem \ref{Thmint:Main}.
Then as in the proof of Theorem \ref{Thmint:Main}, the decreasing sequence
$((C(\prod_{k=n}^{\infty}X)\otimes A)\rtimes \Gamma)_n$ admits a compatible family of multiplicative conditional expectations.
By Theorem \ref{Prop:ame}, each term $(C(\prod_{k=n}^{\infty}X)\otimes A)\rtimes \Gamma$ is nuclear.
Their intersection contains $A\rtimes \Gamma$,
and the coefficients of elements in the intersection are contained in $\bigcap_{n=1}^\infty (C(\prod_{k=n}^{\infty}X)\otimes A)=A$.
Here the last equality follows from a similar argument to that in the proof of Theorem \ref{Thmint:Main}.
Now by Proposition \ref{Prop:AP2}, we conclude the equality
\[\bigcap_{n=1}^\infty\left((C(\prod_{k=n}^{\infty}X)\otimes A)\rtimes \Gamma\right)=A\rtimes \Gamma.\]
\end{proof}
\begin{proof}[Proof of Proposition \ref{Prop:Perm}]
(1):
First take a decreasing sequence $(A_n)_{n=1}^{\infty}$ of separable nuclear \Cs -algebras
whose intersection is isomorphic to $A$ and that admits a compatible family
of multiplicative conditional expectations.
We will use $C(\prod_{k=n}^{\infty} X)\otimes A_n^{\otimes S}$ instead of $C(\prod_{k=n}^{\infty}X)$ in
the proof of Theorem \ref{Thmint:Main}.
To complete the proof, it is enough to show the equality
$$\bigcap_{n=1}^\infty \left(C(\prod_{k=n}^{\infty} X)\otimes \left( A_n^{\otimes S} \right)\right)=A^{\otimes S}.$$
Indeed, if the equality holds, then, thanks to Proposition \ref{Prop:AP2},
the decreasing sequence $((C(\prod_{k=n}^{\infty} X)\otimes A_n^{\otimes S})\rtimes \Gamma)_{n=1}^\infty$
gives the desired sequence.
The inclusion `$\supset$' is clear.
Since $\bigcap_{n=1}^\infty C(\prod_{k=n}^{\infty} X)=\mathbb{C}$, by a similar argument to that in the proof of Theorem \ref{Thmint:Main}, we have
\[\bigcap_{n=1}^\infty \left(C(\prod_{k=n}^{\infty} X)\otimes \left( A_n^{\otimes S} \right)\right)= \bigcap_{n=1}^\infty A_n^{\otimes S}.\]
Fix a state $\varphi$ on $A_1$.
For a subset $T$ of $S$,
consider the conditional expectation
\[E_T:=(\id_{A_1})^{\otimes T} \otimes \varphi^{\otimes (S\setminus T)}\colon A_1^{\otimes S} \rightarrow A_1^{\otimes T}.\]
The net $(E_{\mathfrak{F}})_{\mathfrak{F}}$, where $\mathfrak{F}$ runs over finite subsets of $S$, then
converges to the identity in the pointwise norm convergence.
Moreover, for any $n$ and $T$, we have
$E_T(A_n^{\otimes S})=A_n^{\otimes T}$.
Hence, to show the above equality, it suffices to show the following claim.
For any $m\in \mathbb{N}$, we have
$\bigcap_{n=1}^\infty A_n^{\otimes m} =A^{\otimes m}$.
We show the claim by induction.
The case $m=1$ is trivial.
Suppose the claim is true for $m$.
Then for any $k\in \mathbb{N}$, we have
\[\bigcap_{n=1}^\infty A_n^{\otimes (m+1)}\subset F(A_k^{\otimes m}, A_1, A) = A_k^{\otimes m} \otimes A.\]
Here the last equality follows from the nuclearity of $A_k^{\otimes m}$.
Then by the SOAP of $A$ and our inductive hypothesis,
we further have 
\[ \bigcap_{k=1}^\infty (A\otimes A_k^{\otimes m} ) \subset F(A, A_1, \bigcap_{k=1}^\infty A_k^{\otimes m})
=A^{\otimes (m+1)}.\]
This completes the proof.

(2): Let $A_n$ be a sequence of \Cs -algebras whose terms satisfy the conditions in Theorem \ref{Thmint:Main}
and have the SOAP.
For each $n$, take a decreasing sequence $(B_{n, m})_{m=1}^\infty$ of nuclear \Cs -algebras with a compatible family of multiplicative conditional expectations
whose intersection coincides with $A_n$.
For each $m\in \mathbb{N}$, set $B_m:=\bigotimes_{n=1}^\infty B_{n, m}$.
Then it is not hard to check that each $B_m$ is nuclear and
the decreasing sequence $(B_m)_{m=1}^\infty$ admits a compatible family of multiplicative conditional expectations.
Moreover, by a similar argument to that in the previous paragraph,
it can be shown the equality $\bigcap_{m=1}^\infty B_m= \bigotimes_{n=1}^\infty A_n$.
This completes the proof.
\end{proof}

\section{Hyperbolic group case}\label{Sec:gen}
In this section, we give a geometric construction of a decreasing sequence of Kirchberg algebras whose
decreasing intersection is isomorphic to the hyperbolic group \Cs -algebra.
We construct such a sequence inside the boundary algebra $C(\partial \Gamma)\rtimes \Gamma$.
To find such a sequence, we construct amenable quotients of the boundary space.
The proof does not depend on both the reduced free product theory and Kirchberg--Phillips's $\mathcal{O}_2$-absorption theorem.
We also use the sequence constructed in this section for the free group case
in the next two sections.

For the definition and basic properties of hyperbolic groups,
we refer the reader to \cite[Section 5.3]{BO} and \cite{GH}.
(For the reader who is only interested in the free group case,
we recommend to concentrate on that case. In this case, some arguments related to geodesic paths become
much simpler.)
Here we only recall a few facts.
(See 8.16, 8.21, 8.28, and 8.29 in \cite{GH}.)
For a torsion free element $t$ of a hyperbolic group $\Gamma$, the sequence $(t^n)_{n=1}^\infty$
is quasi-geodesic.
The boundary action of $t$ has exactly two fixed points.
They are the points represented by the quasi-geodesic paths $(t^n)_{n=1}^\infty$ and $(t^{-n})_{n=1}^\infty$.
We denote them by $t^{+\infty}$ and $t^{-\infty}$ respectively.
For any neighborhoods $U_{\pm}$ of $t^{\pm\infty}$,
there is $n\in \mathbb{N}$
such that for any $m\geq n$,
$t^m(\partial \Gamma\setminus U_-)\subset U_+$ holds.

For a metric space $(X, d)$ and its points $x, y, z\in X$,
we denote by $\langle y, z \rangle_x$ the Gromov product $(d(y, x)+ d(z, x) - d(y, z))/2$
of $y, z$ with respect to $x$.

We recall the following criteria for the Hausdorffness of a quotient space.
We left the proof to the reader.
\begin{Prop}
Let $X$ be a compact Hausdorff space.
Let $\mathcal{R}$ be an equivalence relation on $X$.
Assume that
the quotient map $\pi \colon X\rightarrow X/\mathcal{R}$ is closed.
Then the quotient space $X/\mathcal{R}$ is Hausdorff.
\end{Prop}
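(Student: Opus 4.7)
The plan is a direct point-set argument. Given two distinct classes $[x]\neq[y]$ in $X/\mathcal{R}$, I will produce disjoint open neighborhoods of them in the quotient. The strategy has three stages: first show that every equivalence class is closed in $X$; second, separate two such classes by disjoint open subsets of $X$ using normality; and third, replace those open subsets by saturated ones so that they descend to disjoint open sets in the quotient.

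For the first stage, $\{x\}$ is closed in $X$, so by the hypothesis that $\pi$ is closed, $\{\pi(x)\}=\pi(\{x\})$ is closed in $X/\mathcal{R}$; then continuity of $\pi$ gives that $[x]=\pi^{-1}(\{\pi(x)\})$ is closed in $X$, and similarly for $[y]$. For the second stage, $X$ is compact Hausdorff and hence normal, so the disjoint closed sets $[x]$ and $[y]$ admit disjoint open neighborhoods $U\supset[x]$ and $V\supset[y]$.

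For the third stage, I will shrink $U$ to its saturated interior $U':=X\setminus \pi^{-1}(\pi(X\setminus U))$. Because $\pi$ is closed, $\pi(X\setminus U)$ is closed in $X/\mathcal{R}$, so $U'$ is open in $X$; by construction $U'\subset U$ and $U'$ is saturated (a union of equivalence classes). The condition $[x]\cap (X\setminus U)=\emptyset$ forces $[x]\subset U'$. Define $V'$ symmetrically. Then $U'\cap V'\subset U\cap V=\emptyset$, and because $U'$ and $V'$ are saturated, $\pi^{-1}(\pi(U'))=U'$ is open, so by the definition of the quotient topology $\pi(U')$ is open in $X/\mathcal{R}$, and likewise for $\pi(V')$. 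These are disjoint open neighborhoods of $\pi(x)$ and $\pi(y)$, as required.

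There is essentially no technical obstacle. The only subtle point, and the one place where the hypothesis that $\pi$ is closed is genuinely used, is the openness of the saturated shrinking $U'$; without closedness of $\pi$ one can still separate the classes in $X$ but cannot in general push the separation down to the quotient.
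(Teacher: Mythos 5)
Your argument is correct and complete: each class is closed because $\pi$ is closed and continuous, normality of the compact Hausdorff space $X$ separates the two classes, and the passage to the saturated interiors $U'=X\setminus \pi^{-1}(\pi(X\setminus U))$ is exactly where closedness of $\pi$ is used to make the separation descend to $X/\mathcal{R}$. The paper explicitly leaves the proof of this proposition to the reader, so there is nothing in the text to compare against; yours is the standard argument the author intends.
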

The next lemma guarantees the amenability of certain quotients of amenable dynamical systems.
We are grateful to Narutaka Ozawa for letting us know Lusin's theorem \cite[Theorem 5.8.11]{Sri}.
\begin{Lem}\label{Lem:amenable criteria}
Let $\Gamma$ be a group,
$X$ be an amenable compact metrizable $\Gamma$-space.
Let $\mathcal{R}$ be a $\Gamma$-invariant equivalence relation on $X$
such that the quotient space $X/\mathcal{R}$ is Hausdorff.
Assume that each equivalence class of $\mathcal{R}$ is finite.
Then $X/\mathcal{R}$ is again an amenable compact $\Gamma$-space.
\end{Lem}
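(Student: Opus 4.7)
My plan is to push continuous almost-invariant maps witnessing the amenability of $\Gamma \curvearrowright X$ down to $X/\mathcal{R}$ by first averaging them over the (finite) $\mathcal{R}$-classes---which destroys continuity but preserves the $\Gamma$-equivariance defect---and then restoring continuity via Lusin's theorem.

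Concretely, I would fix a net $(\xi_i \colon X \to {\rm Prob}(\Gamma))_{i \in I}$ of continuous maps realizing amenability of $\Gamma \curvearrowright X$, and set
\[
\bar{\xi}_i(x) := \frac{1}{|[x]|} \sum_{y \in [x]} \xi_i(y), \qquad x \in X.
\]
Since $\mathcal{R}$ is closed in $X\times X$ with finite classes, standard Borel-selection theory makes $\bar{\xi}_i$ a well-defined Borel map, and it is $\mathcal{R}$-invariant by construction. The $\Gamma$-invariance of $\mathcal{R}$ then yields
\[
\sup_{x \in X} \bigl\| g . \bar{\xi}_i(x) - \bar{\xi}_i(g . x) \bigr\|_1 \le \sup_{x \in X} \bigl\| g . \xi_i(x) - \xi_i(g . x) \bigr\|_1 \xrightarrow{i} 0
\]
for every $g \in \Gamma$, so $\bar{\xi}_i$ descends to a Borel map $\eta_i \colon X/\mathcal{R} \to {\rm Prob}(\Gamma)$ witnessing amenability of the quotient action in the merely Borel sense. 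Note that $X/\mathcal{R}$ is compact Hausdorff and second countable (hence Polish), as the quotient of a compact metrizable space by a closed equivalence relation with Hausdorff quotient.

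To upgrade to continuous witnesses, I would apply Lusin's theorem (in the form of Srivastava's Theorem~5.8.11) to approximate each $\eta_i$ by continuous ${\rm Prob}(\Gamma)$-valued maps on closed subsets of $X/\mathcal{R}$ of arbitrarily large measure, with respect to a well-chosen reference Borel probability measure (for instance, the push-forward of a $\Gamma$-quasi-invariant probability measure on $X$). After extending via a Tietze/Dugundji argument and combining such partial approximations through a partition of unity tuned to the almost-equivariance of $\eta_i$, one should obtain continuous maps $\zeta_j \colon X/\mathcal{R} \to {\rm Prob}(\Gamma)$ witnessing amenability.

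The hard part is this final passage from Borel to continuous almost-equivariance. Lusin's theorem only guarantees pointwise a.e.\ agreement with a continuous function, whereas the definition of amenability of the action demands a uniform-in-$y$ estimate on the equivariance defect. Closing this gap forces one to control the exceptional set where the continuous surrogate differs from $\eta_i$ simultaneously for every $g$ in the prescribed finite subset of $\Gamma$; I would exploit the perfectness of the quotient map $\pi$ (a closed surjection with finite fibers) together with the upper semicontinuity, and hence boundedness, of the fiber-cardinality function $y \mapsto |\pi^{-1}(y)|$ to arrange the partition of unity so that it absorbs the defect on the bad set uniformly.
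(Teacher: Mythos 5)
Your first step coincides with the paper's: average the continuous witnesses $\xi_i$ over the finite $\mathcal{R}$-classes, observe (using the closedness of $\mathcal{R}$ and the countable-section theorem of Srivastava to write $\mathcal{R}$ as a countable union of graphs of Borel maps) that the resulting maps $\eta_i\colon X/\mathcal{R}\to{\rm Prob}(\Gamma)$ are Borel, and that the equivariance defect of $\eta_i$ is dominated by the class-average of the defect of $\xi_i$, hence still tends to $0$ uniformly. Up to here your argument is correct and is exactly what the paper does.

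The gap is in your final passage from Borel to continuous witnesses, and you have essentially flagged it yourself. Lusin-type approximation only gives a continuous surrogate agreeing with $\eta_i$ off a set of small measure for one fixed reference measure; the topological definition of amenability demands a \emph{uniform} bound $\sup_{y}\|g.\zeta_j(y)-\zeta_j(g.y)\|_1\to 0$, and on the exceptional set the surrogate can differ from $\eta_i$ by the maximal amount $2$ in $\ell^1$, no matter how small that set is in measure. No partition of unity tuned to the fiber cardinalities can absorb a defect of fixed size $2$ on a set you only control in measure, and a single quasi-invariant reference measure need not see all of $X/\mathcal{R}$ anyway (the action need not be ergodic or have full-support invariant measures). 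So as written the regularization step does not close. The paper avoids it entirely by invoking Anantharaman-Delaroche's characterization (Proposition~\ref{Prop:Ana} in the paper, \cite[Theorem 4.5]{Ana}): an action on a compact metrizable space is amenable if and only if there is a net of \emph{Borel} maps into ${\rm Prob}(\Gamma)$ whose equivariance defect tends to $0$ in the integrated sense against every Borel probability measure. Your maps $\eta_i$ satisfy even the stronger uniform estimate, so they verify that criterion directly and no continuity upgrade is needed. Replacing your last paragraph by an appeal to this characterization turns your argument into the paper's proof; without it, the proof is incomplete.
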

To prove Lemma \ref{Lem:amenable criteria}, we need the following characterization of amenability due to Anantharaman-Delaroche \cite[Theorem 4.5]{Ana}.
See also \cite[Prop.5.2.1]{BO} for a generalized version.
\begin{Prop}\label{Prop:Ana}
Let $\alpha\colon \Gamma\curvearrowright X$ be an action of $\Gamma$ on a compact metrizable space $X$.
Then $\alpha$ is amenable if and only if there exists a net
$(\zeta_i\colon X\rightarrow {\rm Prob}(\Gamma))_{i\in I}$ of Borel maps
with the following condition.
$$\lim_{i\in I}\int_X \|g.\zeta_i(x)-\zeta_i(g.x)\|_1~d\mu=0 {\rm\ for\ all\ }\mu \in{\rm Prob}(X) {\rm\ and\ }g\in \Gamma.$$
Here ${\rm Prob}(X)$ denotes the set of all Borel probability measures on $X$.
\end{Prop}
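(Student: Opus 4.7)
The plan is to verify the Borel criterion of Proposition \ref{Prop:Ana} for $Y := X/\mathcal{R}$. First I would observe that $Y$ is a compact Hausdorff metrizable $\Gamma$-space: compactness is automatic as a continuous image of $X$, Hausdorffness is given, and because the quotient map $\pi \colon X \to Y$ is closed (compact domain into Hausdorff codomain), the sets $Y \setminus \pi(X \setminus U)$ with $U$ ranging over finite unions of members of a countable base of $X$ form a countable base of $Y$; Urysohn's metrization theorem then supplies a compatible metric.

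Let $(\zeta_i \colon X \to {\rm Prob}(\Gamma))_{i \in I}$ denote the continuous net that witnesses the amenability of $X$. The natural candidate on $Y$ is the class-average
\[
\bar\zeta_i([x]) := \frac{1}{|[x]|} \sum_{y \in [x]} \zeta_i(y),
\]
which is well-defined because the equivalence classes are finite. The approximate equivariance is inherited via the triangle inequality:
\[
\|g.\bar\zeta_i([x]) - \bar\zeta_i(g.[x])\|_1 \leq \frac{1}{|[x]|} \sum_{y \in [x]} \|g.\zeta_i(y) - \zeta_i(g.y)\|_1 \leq \sup_{z \in X}\|g.\zeta_i(z) - \zeta_i(g.z)\|_1 \to 0,
\]
uniformly on $Y$, which certainly yields the integral condition of Proposition \ref{Prop:Ana} against any $\mu \in {\rm Prob}(Y)$.

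The only non-routine point is the Borel measurability of $\bar\zeta_i$, and this is where the finiteness of classes together with the Lusin--Novikov uniformisation theorem (Theorem 5.8.11 in \cite{Sri}) come in. The relation $\mathcal{R}$ is closed in $X \times X$ (being the preimage of the diagonal of $Y \times Y$) and has countable vertical sections, hence admits a partition
\[
\mathcal{R} = \bigsqcup_{n \in \mathbb{N}} {\rm graph}(f_n),
\]
where each $f_n \colon D_n \to X$ is a Borel map on a Borel set $D_n \subset X$. The counting function $N(x) := |[x]| = \sum_n \chi_{D_n}(x)$ and
\[
\Phi_i(x) := N(x)^{-1}\sum_n \chi_{D_n}(x)\, \zeta_i(f_n(x))
\]
are then visibly Borel on $X$, and since $\Phi_i$ is $\mathcal{R}$-invariant while $\pi$ is a countable-to-one Borel surjection between standard Borel spaces, $\Phi_i$ descends to a Borel map on $Y$ which coincides with $\bar\zeta_i$.

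The main obstacle I anticipate is precisely this measurability step: $\bar\zeta_i$ is generally discontinuous wherever $|[x]|$ jumps, and nothing in the hypotheses permits a continuous enumeration of the classes. The Lusin--Novikov theorem replaces a continuous enumeration by a Borel enumeration, after which the averaging and the descent to $Y$ are immediate, and Proposition \ref{Prop:Ana} supplies amenability of the quotient action.
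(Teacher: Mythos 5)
Your proposal does not prove the stated proposition; it proves a different statement, namely Lemma \ref{Lem:amenable criteria} of the paper (amenability passes to Hausdorff quotients by invariant equivalence relations with finite classes), and it does so by \emph{invoking} Proposition \ref{Prop:Ana} as a black box. The proposition you were asked to prove is the Anantharaman--Delaroche characterization itself: the equivalence between the definition of amenability (a net of \emph{continuous} maps $\zeta_i\colon X\rightarrow{\rm Prob}(\Gamma)$ with $\sup_{x\in X}\|g.\zeta_i(x)-\zeta_i(g.x)\|_1\rightarrow 0$) and the a priori much weaker condition that there exist \emph{Borel} maps for which the displacement tends to $0$ merely in $L^1(\mu)$ for every Borel probability measure $\mu$. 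The forward implication is immediate (continuous maps are Borel and uniform convergence dominates every integral), but the substantive content is the converse: upgrading Borel maps with convergence in measure, simultaneously for all $\mu\in{\rm Prob}(X)$, to continuous maps with uniform convergence. Nothing in your argument addresses this; your class-averaging construction, the Lusin--Novikov uniformization, and the descent to the quotient are all aimed at a downstream application of the proposition, not at the proposition itself.

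For the record, the paper does not reprove this proposition either: it is quoted from Anantharaman-Delaroche \cite[Theorem 4.5]{Ana} (see also \cite[Prop.~5.2.1]{BO}). The standard proof of the nontrivial direction is a convexity argument: one views the finitely supported maps $X\rightarrow{\rm Prob}(\Gamma)$ as a convex subset of a suitable function space, observes that the hypothesis gives approximate invariance in the topology of convergence in $\mu$-measure for every $\mu$, and uses the coincidence of weak and norm closures of convex sets (together with Lusin-type approximation of Borel maps by continuous ones) to produce continuous approximately invariant maps; a separate compactness argument converts "for every $\mu$" into uniformity over $X$. If your intention was to prove Lemma \ref{Lem:amenable criteria}, then your argument is essentially the paper's own proof of that lemma (same class-averaging, same appeal to Lusin's theorem for Borelness), but as a proof of Proposition \ref{Prop:Ana} it is a non-starter because it presupposes the conclusion.
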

\begin{proof}[Proof of Lemma \ref{Lem:amenable criteria}]
First note that $\mathcal{R}$ must be closed in $X\times X$,
otherwise the quotient space $X/\mathcal{R}$ cannot be Hausdorff.
Then, since each equivalence class is finite,
Lusin's theorem \cite[Theorem 5.8.11]{Sri} tells us that
$\mathcal{R}$ is presented as a countable disjoint union of graphs of Borel maps between Borel subsets of $X$.
Then it is not hard to check that for each $f\in C(X)$,
the function
$\tilde{f}$ on $X/\mathcal{R}$ defined by
$$\tilde{f}([x]):=\frac{1}{\sharp[x]}\sum_{y\in [x]}f(y)$$
is Borel.
By the same reason, the similar formula also defines the map $\Phi$
from $C(X, {\rm Prob}(\Gamma))$ to $\mathcal{B}(X/\mathcal{R}, {\rm Prob}(\Gamma))$.
Here $C(X, {\rm Prob}(\Gamma))$ denotes the set of all continuous maps
from $X$ into ${\rm Prob}(\Gamma)$ and
$\mathcal{B}(X/\mathcal{R}, {\rm Prob}(\Gamma))$ denotes the set
of all Borel maps from $X/\mathcal{R}$ into ${\rm Prob}(\Gamma)$.

Let $(\zeta_i\colon X\rightarrow {\rm Prob}(\Gamma))_{i\in I}$ be a net of continuous maps that
satisfies the condition in the definition of amenability for $\Gamma\curvearrowright X$.
Consider the net $(\Phi(\zeta_i))_{i\in I}$.
Then for any $g\in \Gamma$, $x\in X$, and $i\in I$,
we have
$$\|( g.\Phi(\zeta_i))([x])-\Phi(\zeta_i)(g.[x])\|_1\\
\leq \frac{1}{\sharp [x]}\sum_{y\in [x]}\|g.\zeta_i(y)-\zeta_i(g.y)\|_1.$$
Thus, for each $g\in \Gamma$, the norms $\|( g.\Phi(\zeta_i))([x])-\Phi(\zeta_i)(g.[x])\|_1$
converge to $0$ uniformly on $X/\mathcal{R}$ as $i$ tends to $\infty$.
In particular, the net
$(\Phi(\zeta_i))_{i\in I}$
satisfies the condition in Proposition \ref{Prop:Ana}.
\end{proof}
\begin{Lem}\label{Lem:Sep}
Let $\Gamma$ be a hyperbolic group.
Let  $T$ be a finite set of torsion free elements of $\Gamma$.
Then the set
$$\mathcal{R}_{T}:= \Delta_{\partial \Gamma}　\cup \left\{ (g.t^{+\infty}, g.t^{-\infty}): g　\in \Gamma, t \in T\cup T^{-1} \right\}$$
is a $\Gamma$-invariant equivalence relation on $\partial \Gamma$.
Moreover, the quotient space $\partial\Gamma/ \mathcal{R}_{T}$ is a Hausdorff space.
\end{Lem}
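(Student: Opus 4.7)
My plan is to verify the equivalence-relation axioms directly and then deduce the Hausdorff property from the closed-map criterion stated just above. Reflexivity and $\Gamma$-invariance are immediate from the definition. Symmetry follows from the identities $(t^{-1})^{\pm\infty}=t^{\mp\infty}$ and from $T\cup T^{-1}$ being closed under inversion. The only nontrivial axiom is transitivity: if two non-diagonal pairs $\{x,y\}=\{g.t^{+\infty},g.t^{-\infty}\}$ and $\{y,z\}=\{h.s^{+\infty},h.s^{-\infty}\}$ share the middle point $y$, then the infinite-order elements $gtg^{-1}$ and $hsh^{-1}$ share a common boundary fixed point. I will invoke the standard hyperbolic-group fact, via the Tits alternative applied to $\langle gtg^{-1},hsh^{-1}\rangle$, that two infinite-order elements sharing one boundary fixed point share both; otherwise the subgroup would contain a non-abelian free subgroup, whose ping-pong dynamics on $\partial\Gamma$ preclude a common global fixed point. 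Hence $\{x,y\}=\{y,z\}$, and $(x,z)$ lies in $\Delta_{\partial\Gamma}$ or equals $(x,y)$, in either case in $\mathcal{R}_T$.

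For the Hausdorff property I will show that the quotient map is closed. Let $F\subset\partial\Gamma$ be closed and set $\widetilde{F}:=\bigcup_{x\in F}[x]_{\mathcal{R}_T}$; the goal is that $\widetilde{F}$ is closed. Given $y_n\in\widetilde{F}$ with $y_n\to y$, if infinitely many $y_n$ lie in $F$ then $y\in F$. Otherwise, the finiteness of $T\cup T^{-1}$ lets me pass to a subsequence so that $y_n=g_n.t^{+\infty}$ with $z_n:=g_n.t^{-\infty}\in F$ for a fixed $t\in T\cup T^{-1}$; compactness of $F$ gives a further subsequence with $z_n\to z\in F$. If $y=z$, I am done, so assume $y\ne z$, which is the key case.

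Now I exploit hyperbolicity: since $(y_n,z_n)\to(y,z)$ with $y\ne z$, the Cayley-graph distance $d(e,[y_n,z_n])$ is uniformly bounded, because boundary-to-boundary geodesics in a proper hyperbolic space converge uniformly on compacts to the limit geodesic. Since $[y_n,z_n]=g_n.[t^{+\infty},t^{-\infty}]$ lies within bounded Hausdorff distance of the translated quasi-axis $g_n\cdot\{t^k\}_{k\in\mathbb{Z}}$ by Morse stability, there is a uniform $R>0$ with $d(g_n^{-1},\{t^k\}_k)\le R$. Writing $g_n^{-1}=t^{k_n}b_n$ with $b_n$ in the finite ball $B_R(e)$, a pigeonhole argument extracts a subsequence along which $b_n\equiv b$; then $g_n=b^{-1}t^{-k_n}$, and since $t$ fixes both $t^{\pm\infty}$, I conclude $y=b^{-1}.t^{+\infty}$ and $z=b^{-1}.t^{-\infty}$, so $(y,z)\in\mathcal{R}_T$ and $y\in\widetilde{F}$. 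The main obstacle is cleanly marshaling the two hyperbolic-geometry inputs — boundedness of $d(e,[y_n,z_n])$ when the endpoints converge to a distinct pair, and Morse stability for the translated quasi-axis — both standard from \cite{GH} but needing care to invoke correctly.
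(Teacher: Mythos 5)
Your proof is correct and takes essentially the same route as the paper's: both verify the equivalence-relation axioms via the fact (8.30 in \cite{GH}) that the fixed-point pairs of two infinite-order elements are either equal or disjoint, and both prove Hausdorffness by showing the quotient map is closed, fixing $t$ by pigeonhole and then using Morse stability of the quasi-axis $\{t^k\}_{k\in\mathbb{Z}}$ together with thin-triangle/Gromov-product estimates. The only difference is organizational: the paper proves the contrapositive of your key step (if $|g_n|\to\infty$ after minimizing $g_n$ over the coset $g_n\langle t\rangle$, then $\langle g_nt^{k},g_nt^{-l}\rangle_e\to\infty$ and the two limit points merge), whereas you argue directly that distinct limits force $g_n$ to lie at bounded distance from the translated axis and hence, after pigeonhole, the pairs $(y_n,z_n)$ to be eventually constant.
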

\begin{proof}
Clearly $\mathcal{R}_{T}$ is $\Gamma$-invariant.
Let $s$ and $t$ be torsion free elements of $\Gamma$.
Then the two sets $\{s^{\pm\infty}\}$ and $\{t^{\pm\infty}\}$
are either disjoint or the same \cite[8.30]{GH}.
Therefore the set $\mathcal{R}_{T}$ is an equivalence relation.
Note that this shows that each equivalence class of $\mathcal{R}_{T}$ contains at most two points.

For the Hausdorffness of the quotient space, it suffices to show
that the quotient map $\pi\colon \partial \Gamma\rightarrow \partial \Gamma/\mathcal{R}_{T}$
is closed.
Let $A$ be a closed subset of $\partial \Gamma$.
Then $\pi^{-1}(\pi(A))=A\cup B$,
where
$$B:=\left\{g.t^{-\infty}\in \partial \Gamma : g\in \Gamma, t\in T\cup T^{-1}, g.t^{+\infty}\in A\right\}.$$
To show the closedness of $\pi(A)$, which is equivalent to that of $\pi^{-1}(\pi(A))$,
it suffices to show that ${\rm cl}(B)\subset A\cup B$.
Fix a finite generating set $S$ of $\Gamma$ and denote by $|\cdot |$ and $d(\cdot, \cdot)$
the length function and the left invariant metric on $\Gamma$ determined by $S$ respectively.
Take $\delta > 0$ with the property that every geodesic triangle in $(\Gamma, d)$ is $\delta$-thin \cite[Proposition 5.3.4]{BO}.
Let $x\in {\rm cl}(B)$ and take a sequence $(g_n.t_n^{-\infty})_{n=1}^\infty$ in $B$ which converges
to $x$.
By passing to a subsequence, we may assume that there is $t\in T\cup T^{-1}$
with $t_n=t$ for all $n\in \mathbb{N}$.
Replace $g_n$ by $g_n t^{l(n)}$ for some $l(n) \in \mathbb{Z}$ for each $n\in \mathbb{N}$,
we may further assume $|g_n|\leq |g_n t^k|$ for all $k\in \mathbb{Z}$ and $n\in \mathbb{N}$.
If the sequence $(g_n)_{n=1}^\infty$ has a bounded subsequence,
then it has a constant subsequence. Hence we have $x\in B$.
Assume $|g_n|\rightarrow \infty$.
For each $k\in \mathbb{Z}$, take a geodesic path $[e, t^k]$ from $e$ to $t^k$.
Since $t$ is torsion free, the sequences $(t^n)_{n=1}^\infty$ and $(t^{-n})_{n=1}^\infty$ are quasi-geodesic.
Therefore, by \cite[Prop.5.3.5]{BO},
there is $D>0$ such that the Hausdorff distance between
$[e, t^k]$ and $(t^n)_{n=0}^{k}$ is less than $D$ for all $k\in \mathbb{Z}$.
Now consider a geodesic triangle $\Delta$ with the vertices
$\{e, g_n^{-1}, t^k\}$. Let $f$ denote the comparison tripod of $\Delta$ (see Section 5.3 of \cite{BO} for the definition.)
Let $u, v, w$ be (unique) points in $\Delta$ lying on the geodesic paths
$[e, g_n^{-1}], [g_n^{-1}, t^k], [t^k, e] \subset \Delta$ respectively that satisfy $f(u)=f(v)=f(w)$.
Put $l_1:=d(e, u)=d(w, e), l_2:=d(u, g_n^{-1})=d(g_n^{-1}, v)$, and $l_3:=d(v, t^k)=d(t^k, w)$.
Then, since $\Delta$ is $\delta$-thin, we have $l_2 + \delta \geq {\rm dist}(g_n^{-1}, [e, t^k])\geq |g_n|-D.$
Since $l_1 + l_2=|g_n|$, this implies $l_1 \leq D +\delta.$
Then since $l_1+ l_3 =| t^k|$, we further obtain $l_3 \geq |t^k| -D -\delta$.
Combining these inequalities, we have $|g_nt^k| = l_2+ l _3 \geq |t^k|+|g_n|-2(D+\delta)$.
This yields
\[\langle g_nt^{k}, g_nt^{-l} \rangle_e\geq |g_n|-2(D+\delta) {\rm\ for\ all\ } n, k, l \in \mathbb{N}.\]
Since both $(t^k)_{k=1}^\infty$ and $(t^{-k})_{k=1}^\infty$ are quasi-geodesic and the left multiplication action of $\Gamma$ on itself is isometric,
the paths $\{(g_nt^k)_{k=1}^\infty, (g_nt^{-k})_{k=1}^\infty: n\in \mathbb{N}\}$ are uniformly quasi-geodesic
(i.e., there are constants $C\geq 1$ and $r > 0$ such that all paths in the set are $(C, r)$-quasi-geodesic).
This with the above inequality shows that
the limits of $(g_n.t^{+\infty})_{n=1}^\infty$ and $(g_n.t^{-\infty})_{n=1}^\infty$ coincide.
(Cf. Lemmas 5.3.5, 5.3.8 in \cite{BO} and the definition of the topology on $\partial \Gamma$.)
Since $A$ is closed, we have $x\in A$ as required.
\end{proof}

For a subgroup $\Lambda$ of a hyperbolic group $\Gamma$, we define
the limit set $L_\Lambda$ of $\Lambda$ to be the closure
of the set $\{t^{+\infty}\in \partial\Gamma: t\in\Lambda {\rm\ torsion\ free}\}$ in $\partial \Gamma$.
Recall that any hyperbolic group
does not contain an infinite torsion subgroup \cite[8.36]{GH}.
Therefore the limit set $L_\Lambda$ is nonempty when $\Lambda$ is infinite.
Since the equality $(sts^{-1})^{+\infty}=s.t^{+\infty}$ holds for any torsion free element $t\in \Gamma$ and any $s\in \Gamma$, the limit set $L_\Lambda$ is $\Lambda$-invariant.
Hence $\Lambda$ acts on $L_\Lambda$ in the canonical way.
Next we give two lemmas on the action on the limit set,
which are familiar to specialists.

We recall a few terminology.
Let $\alpha \colon \Gamma \curvearrowright X$ be an action of a group $\Gamma$
on a compact Hausdorff space $X$.
The action $\alpha$ is said to be minimal if all $\Gamma$-orbits are dense in $X$.
The action $\alpha$ is said to be topologically free
if for any $g\in \Gamma\setminus \{e\}$,
the set of all points of $X$ fixed by $\alpha(g)$ has the trivial interior.
\begin{Lem}\label{Lem:top}
Let $\Lambda$ be an ICC subgroup of a hyperbolic group $\Gamma$.
Then the action $\varphi_\Lambda$ of $\Lambda$ on its limit set $L_\Lambda$ is amenable, minimal, and topologically free.
\end{Lem}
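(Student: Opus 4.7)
The plan is to establish the three properties in turn, using the north-south dynamics of torsion-free elements recalled earlier and the ICC hypothesis on $\Lambda$.

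For amenability, I would invoke the classical result that the full boundary action $\Gamma \curvearrowright \partial\Gamma$ of a hyperbolic group is amenable, together with the fact that amenability passes both to closed invariant subspaces and to subgroups. Since $L_\Lambda$ is a closed $\Lambda$-invariant subset of $\partial\Gamma$, this immediately yields amenability of $\varphi_\Lambda$.

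For minimality, fix an arbitrary $\xi \in L_\Lambda$ and any torsion-free $t \in \Lambda$. By the north-south dynamics, $t^n.\eta \to t^{+\infty}$ for every $\eta \neq t^{-\infty}$, so once some $s \in \Lambda$ sends $\xi$ off $t^{-\infty}$ one obtains $t^n s.\xi \to t^{+\infty}$ and hence $t^{+\infty} \in \overline{\Lambda.\xi}$. Such an $s$ exists, for otherwise the whole of $\Lambda$ would sit inside the stabilizer of $t^{-\infty}$, which is virtually cyclic in a hyperbolic group, contradicting the ICC hypothesis. Since $\{t^{+\infty} : t\in\Lambda \text{ torsion-free}\}$ is by definition dense in $L_\Lambda$, we conclude $\overline{\Lambda.\xi} = L_\Lambda$.

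For topological freeness, I would first verify that $L_\Lambda$ is perfect. The ICC assumption prevents $\Lambda$ from being virtually cyclic, so $\Lambda$ is non-elementary, and for non-elementary subgroups of hyperbolic groups it is standard (via a ping-pong argument on two torsion-free elements with disjoint pairs of boundary fixed points) that the limit set has no isolated points. Next, every nontrivial $g \in \Lambda$ has only finitely many fixed points on $\partial \Gamma$: infinite-order elements have exactly the two fixed points $g^{\pm\infty}$, and finite-order elements of a word-hyperbolic group are elliptic in the standard trichotomy of isometries of $\delta$-hyperbolic spaces and hence fix no point of $\partial\Gamma$. Since a finite subset of a perfect compact space has empty interior, the fixed-point set of any $g \neq e$ in $L_\Lambda$ has empty interior. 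The main obstacle I anticipate is precisely this topological freeness step, where one must carefully verify perfectness from the ICC hypothesis and control the boundary behavior of torsion elements; by contrast, amenability and minimality reduce essentially to standard geometric facts once the correct framework is in place.
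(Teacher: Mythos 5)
Your treatment of amenability and minimality is correct and close in spirit to the paper's: amenability follows at once from amenability of $\Gamma\curvearrowright\partial\Gamma$ restricted to the closed invariant set $L_\Lambda$ and to the subgroup $\Lambda$, and for minimality the paper likewise reduces everything to the density of $\{t^{+\infty}: t\in\Lambda \text{ torsion free}\}$ and uses the ICC hypothesis to see that a closed invariant set cannot be swallowed by a single fixed point (the paper extracts a free subgroup of rank $2$ and works with two torsion free elements without common fixed points, whereas you quote that boundary-point stabilizers are elementary; both routes are standard and sound).

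The topological freeness argument, however, contains a genuine error at exactly the step you flag as the main obstacle. The claim that a finite-order element of a hyperbolic group, being elliptic, ``fixes no point of $\partial\Gamma$'' is false: ellipticity means bounded orbits and implies nothing about the induced boundary homeomorphism. Concretely, $\Gamma=\mathbb{F}_2\times\mathbb{Z}/2\mathbb{Z}$ is hyperbolic and its central involution moves every vertex of the Cayley graph a bounded distance, hence fixes \emph{every} point of $\partial\Gamma\cong\partial\mathbb{F}_2$. The remark following the lemma in the paper makes the same point: a non-ICC $\Lambda$ may contain a finite-index subgroup whose nontrivial center acts trivially on $L_\Lambda$. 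So torsion elements are precisely where the ICC hypothesis must do real work, and your proof invokes ICC for this step only through perfectness of $L_\Lambda$, which is not enough. The paper's argument is substantially longer for exactly this reason: assuming some $g_1\neq e$ has fixed-point set with nonempty interior, it first shows $g_1$ must have finite order, and then uses the ICC condition to build a strictly increasing chain of finite subgroups $G_1\subsetneq G_2\subsetneq\cdots$ by adjoining conjugates and intersecting fixed-point sets, so that the union would be an infinite torsion subgroup of $\Lambda$ --- impossible in a hyperbolic group. You would need to supply an argument of this kind to close the gap.
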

\begin{proof}
The amenability of the boundary action shows that
the action $\varphi_\Lambda$ is amenable.
Since $\Lambda$ is ICC, it is neither finite nor virtually cyclic.
Hence $\Lambda$ contains a free group of rank $2$ by Theorem 8.37 of \cite{GH}.
Hence there are two torsion free elements $s$ and $t$ of $\Lambda$
which do not have a common fixed point.
This shows that any $\Lambda$-invariant closed non-empty subset $X$ of $L_\Lambda$ contains
at least four points, i.e., $s^{\pm \infty}$ and $t^{\pm \infty}$.
Therefore, for any torsion free element $u$ of $\Lambda$,
we can find an element of $X$ which is not equal to $u^{- \infty}$.
This shows that $X$ contains the set $\{u^{+\infty}: u\in \Lambda$ torsion free$\}$, which is dense in $L_\Lambda$.
Therefore $X=L_\Lambda$ and we conclude the minimality of $\varphi_\Lambda$.

Assume now that $\varphi_\Lambda$ is not topologically free.
Take an element $g_1 \in \Lambda\setminus \{e\}$ such that
the set $F_{g_1}:=\{x\in L_\Lambda:g_1.x=x\}$ has a nontrivial interior.
Then note that $L_\Lambda$ does not have an isolated point, since $\varphi_\Lambda$ is minimal
and $L_\Lambda$ is infinite. (Here the infiniteness of $L_\Lambda$ follows from the amenability of $\varphi_\Lambda$
and the non-amenability of $\Lambda$.)
Since any torsion free element of $\Lambda$ has only two fixed points in $L_\Lambda$, the order of $g_1$ must be finite. 
Assume $F_{g_1}=L_\Lambda$.
This means that the kernel of $\varphi_\Lambda$ is nontrivial.
Since any torsion free element of $\Lambda$ acts on $L_\Lambda$ non-trivially,
the kernel is a nontrivial torsion subgroup.
Therefore it must be finite.
This contradicts to the ICC condition.
For a subgroup $G$ of $\Lambda$, we set
$F_G:=\bigcap_{g \in G}F_g$.
Note that for a subgroup $G$ of $\Lambda$ and $g\in\Lambda$,
we have $F_{gGg^{-1}}=gF_G$.
Set $G_1:=\langle g_1\rangle.$
Then ${\rm int}(F_{G_1}) = {\rm int}(F_{g_1})\neq \emptyset$.
We will show that there is $g_2\in\Lambda$ satisfying
$$\emptyset \neq g_2({\rm int}(F_{G_1}))\cap{\rm int}(F_{G_1})\subsetneq {\rm int}(F_{G_1}).$$
Indeed, if such $g_2$ does not exist,
then the family $\{g({\rm int}(F_{G_1})): g\in\Lambda\}$ makes an open covering
of $L_\Lambda$ whose members are mutually disjoint.
(Note that if $g\in \Lambda$ satisfies $ {\rm int}(F_{G_1})\subsetneq g({\rm int}(F_{G_1}))$,
then $g^{-1}$ satisfies the required condition.)
This forces that the subgroup
$$\Lambda_0:=\left\{g\in \Lambda: g({\rm int}(F_{G_1}))={\rm int}(F_{G_1})\right\}$$
has finite index in $\Lambda$.
Since $\Lambda$ is ICC,
the subgroup $G:=\langle gG_1g^{-1}: g\in \Lambda_0\rangle$ must be infinite.
Moreover, by definition, we have ${\rm int}(F_{G})={\rm int}(F_{G_1})\neq 0$.
Hence $G$ must be an infinite torsion subgroup, a contradiction.
Thus we can take $g_2\in \Lambda$ as above.
Set $G_2=\langle G_1, g_2G_1{g_2}^{-1}\rangle$.
Then we have
$\emptyset\neq{\rm int}(F_{G_2})\subsetneq {\rm int}(F_{G_1})$.
This shows that $G_2$ is still finite and is larger than $G_1$.
Continuing this argument inductively, we obtain
a strictly increasing sequence $(G_n)_{n=1}^\infty$ of finite subgroups of $\Lambda$.
Then the union $\bigcup_{n=1}^\infty G_n$ is an infinite torsion subgroup of $\Lambda$,
again a contradiction. 
\end{proof}
\begin{Rem}
Conversely, if $\Lambda$ is not ICC, then the action on the limit set $L_\Lambda$ is not faithful.
In this case, $\Lambda$ contains a finite index subgroup $\Lambda_0$ with the nontrivial center.
Since $L_{\Lambda_0}=L_\Lambda$, the center of $\Lambda_0$ acts on $L_\Lambda$ trivially.
\end{Rem}

\begin{Lem}\label{Lem:dense}
For $\Lambda$ as in Lemma {\rm \ref{Lem:top}}, the equivalence relation
$$\mathcal{R}:=\left( \bigcup_{t\in \Lambda, {\rm\ torsion\ free}} \mathcal{R}_{{\{t\}}} \right) \cap (L_\Lambda\times L_\Lambda)$$ on $L_{\Lambda}$
is dense in $L_\Lambda\times L_\Lambda$.
\end{Lem}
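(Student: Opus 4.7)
The plan is to show that for any non-empty open sets $U,V\subset L_\Lambda$ there is a torsion-free $w\in\Lambda$ with $w^{+\infty}\in U$ and $w^{-\infty}\in V$. The pair $(w^{+\infty},w^{-\infty})$ then lies in $\mathcal{R}_{\{w\}}\cap(L_\Lambda\times L_\Lambda)\subset\mathcal{R}$ by taking $g=e$ in the definition of $\mathcal{R}_{\{w\}}$, which establishes density.

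The element $w$ will be built by classical pingpong, so first I need two torsion-free $s,t\in\Lambda$ with $s^{+\infty}\in U$, $t^{-\infty}\in V$, and with disjoint pairs of boundary fixed points. Both $\{u^{+\infty}:u\in\Lambda\text{ torsion free}\}$ and $\{u^{-\infty}:u\in\Lambda\text{ torsion free}\}$ are dense in $L_\Lambda$ -- the former by the very definition of the limit set, the latter because $u\mapsto u^{-1}$ permutes torsion-free elements. I would first pick $s$ with $s^{+\infty}\in U$, then observe that $V\setminus\{s^{+\infty},s^{-\infty}\}$ is a non-empty open subset of $L_\Lambda$ (using the absence of isolated points in $L_\Lambda$ established in the proof of Lemma \ref{Lem:top}), and pick $t$ with $t^{-\infty}$ inside it. Disjointness of the two fixed-point pairs is then automatic from the standard dichotomy (8.30 of \cite{GH}) that two torsion-free elements of a hyperbolic group either share the pair of boundary fixed points or have disjoint pairs.

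With $s,t$ in place, I would take pairwise disjoint open neighborhoods $V_s^\pm, V_t^\pm$ of $s^{\pm\infty}, t^{\pm\infty}$ small enough that $V_s^+\subset U$, $V_t^-\subset V$, with pairwise disjoint closures. By the north-south dynamics of torsion-free elements recalled at the start of the section, for sufficiently large $n$ both $s^n(\partial\Gamma\setminus V_s^-)\subset V_s^+$ and $t^n(\partial\Gamma\setminus V_t^-)\subset V_t^+$. Set $w:=s^n t^n$; the pingpong lemma ensures that $s^n$ and $t^n$ generate a free group of rank two, so $w$ has infinite order and is thus torsion free. Composing the two inclusions gives $w(\partial\Gamma\setminus V_t^-)\subset V_s^+$ and $w^{-1}(\partial\Gamma\setminus V_s^+)\subset V_t^-$; under iteration of $w$ the set $\partial\Gamma\setminus V_t^-$ contracts into $V_s^+$, so by north-south dynamics applied to $w$ itself the attractor $w^{+\infty}$ is pinned in $V_s^+\subset U$ and, dually, the repeller $w^{-\infty}$ in $V_t^-\subset V$. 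The main obstacle I expect is exactly this last step -- translating the pingpong-type inclusions into the precise location of the two boundary fixed points of $w$ -- but once that is in hand the remaining ingredients (density in the limit set, non-isolation of $L_\Lambda$, and the fixed-point dichotomy) either have just been cited or are immediate.
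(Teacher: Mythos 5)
Your proposal is correct and follows essentially the same route as the paper: both construct $w=s^Nt^N$ from two torsion-free elements with disjoint fixed-point pairs and use the composed north-south inclusions $w^{m}(\partial\Gamma\setminus V_t^-)\subset V_s^+$ to pin $w^{+\infty}$ and $w^{-\infty}$ near the prescribed targets. The only differences are cosmetic: you make explicit the reduction to density of the attracting/repelling points in $L_\Lambda$ (which the paper leaves implicit), and you get torsion-freeness of $w$ via ping-pong where the paper reads it off directly from the proper inclusions.
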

\begin{proof}
Let $s$ and $t$ be two torsion free elements in $\Lambda$ which do not have a common fixed point.
For any neighborhoods $U_{\pm}$ of $s^{\pm \infty}$ and neighborhoods $V_\pm$ of $t^{\pm \infty}$
with the properties $U_+\cap V_-=\emptyset$ and $U_-\cap V_+=\emptyset$,
take a natural number $N$ satisfying
$s^{N}(\partial\Gamma\setminus U_-)\subsetneq U_+$ and
$t^{N}(\partial\Gamma\setminus V_-)\subsetneq V_+$.
Then, for any $m\in \mathbb{N}$, we have $(s^Nt^N)^m(\partial\Gamma\setminus V_-)\subsetneq U_+$
and $(s^Nt^N)^{-m}(\partial\Gamma\setminus U_+)\subsetneq V_-$.
This shows that the element $s^Nt^N$ is torsion free, $(s^Nt^N)^{+\infty}\in {\rm cl}(U_+)$, and $(s^Nt^N)^{-\infty}\in {\rm cl}(V_-)$.
Thus ${\rm cl}(U_+)\times {\rm cl}(V_-)$ intersects with $\mathcal{R}$.
Since both $U_+$ and $V_-$ can be arbitrarily small, this proves
the density of $\mathcal{R}$.
\end{proof}
Recall that an action $\Gamma\curvearrowright X$ of a group
on a compact Hausdorff space is said to be a locally boundary action
if for any nonempty open set $U \subset X$,
there is an open set $V\subset U$ and an element $t\in \Gamma$
satisfying
${\rm cl}(t.V)\subsetneq V$ \cite[Definition 6]{LaS}.
\begin{Lem}\label{Lem:loc}
Let $\Lambda$ and $\Gamma$ be as in Lemma \ref{Lem:top}.
Let $T$ be a finite set of torsion free elements of $\Lambda$.
Then the action $\Lambda\curvearrowright L_\Lambda/(\mathcal{R}_{T}\cap (L_\Lambda\times L_\Lambda))$ is a locally boundary action.
\end{Lem}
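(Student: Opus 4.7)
The plan is to find, for any given nonempty open $U\subset Y:=L_\Lambda/(\mathcal{R}_T\cap(L_\Lambda\times L_\Lambda))$, a torsion-free element $u\in\Lambda$ whose attracting and repelling fixed points project to two \emph{distinct} points of $Y$, one in $U$ and one outside; the sought-for $V\subset U$ will then come from the standard north-south dynamics of $u$ pushed down to $Y$ through the (closed) quotient map $\pi\colon L_\Lambda\to Y$. After replacing $U$ with a smaller nonempty open set I may assume $\mathrm{cl}(U)\neq Y$; set $\tilde U:=\pi^{-1}(U)$. Using the minimality of $\Lambda\curvearrowright L_\Lambda$ (Lemma~\ref{Lem:top}) and the presence of a rank-two free subgroup in $\Lambda$ guaranteed by the ICC assumption, I pick torsion-free $s,t\in\Lambda$ with $\{s^{\pm\infty}\}\cap\{t^{\pm\infty}\}=\emptyset$, $s^{+\infty}\in\tilde U$, and $t^{-\infty}\in L_\Lambda\setminus\mathrm{cl}(\tilde U)$. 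The ping-pong construction from the proof of Lemma~\ref{Lem:dense} then produces $u_N:=s^Nt^N$, torsion-free for large $N$ with $u_N^{+\infty}\to s^{+\infty}$ and $u_N^{-\infty}\to t^{-\infty}$, so in particular $u_N^{+\infty}\in\tilde U$ and $u_N^{-\infty}\in L_\Lambda\setminus\mathrm{cl}(\tilde U)$ for large $N$.

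The main subtle point is to arrange in addition that $(u_N^{+\infty},u_N^{-\infty})\notin\mathcal{R}_T$ for some such $N$, which is what makes $[u_N^{+\infty}]$ and $[u_N^{-\infty}]$ distinct in $Y$. I would argue this by contradiction: if $(u_N^{+\infty},u_N^{-\infty})\in\mathcal{R}_T$ for every large $N$, each $u_N$ would be commensurable in $\Gamma$ with some conjugate $g_N\tau_Ng_N^{-1}$ of an element of $T\cup T^{-1}$. Since each $s^Nt^N$ is primitive in $\langle s,t\rangle\cong F_2$ (a straightforward syllable-count argument), distinct $u_N$'s lie in distinct maximal elementary subgroups of $\Gamma$, so the pairs $(g_N\tau_N^{+\infty},g_N\tau_N^{-\infty})$ form infinitely many distinct elements of $\mathcal{R}_T$ converging to $(s^{+\infty},t^{-\infty})$. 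Since $\mathcal{R}_T$ is closed in $\partial\Gamma\times\partial\Gamma$ (by the proof of Lemma~\ref{Lem:Sep}), the limit lies in $\mathcal{R}_T$; then $s^{+\infty}$ and $t^{-\infty}$ would be the fixed-point pair of a single hyperbolic element $g\tau g^{-1}$, and since the stabilizer of a boundary point in a hyperbolic group is virtually cyclic, this forces $s$ and $t$ to share a fixed point, contradicting our choice. This is the step I expect to be the main obstacle.

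Having fixed such a good $u:=u_N$, the classical north-south dynamics of $u$ on $\partial\Gamma$ descends through $\pi$ to $Y$: for any open $W_\pm\subset Y$ with $[u^{\pm\infty}]\in W_\pm$, one has $u^n(Y\setminus W_-)\subset W_+$ for all large $n$. The space $Y$ is compact Hausdorff by Lemma~\ref{Lem:Sep} and is perfect, since $L_\Lambda$ is perfect (as noted in the proof of Lemma~\ref{Lem:top}) and the equivalence classes are finite. I would then pick an open $V\subset U$ containing $[u^{+\infty}]$ together with some auxiliary point $[y_0]\neq[u^{+\infty}]$, with $\mathrm{cl}(V)\subset U$ and $[u^{-\infty}]\notin\mathrm{cl}(V)$; take $W_-$ to be an open neighborhood of $[u^{-\infty}]$ disjoint from $\mathrm{cl}(V)$ and $W_+\subset V\setminus\{[y_0]\}$ an open neighborhood of $[u^{+\infty}]$. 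For large $n$ the chain $\mathrm{cl}(u^nV)\subset u^n\mathrm{cl}(V)\subset W_+\subsetneq V$ then yields the locally boundary property.
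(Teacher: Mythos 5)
Your proof is correct and follows essentially the same route as the paper: both reduce the locally boundary property to the north--south dynamics of a torsion-free element of $\Lambda$ whose two fixed points remain distinct in the quotient, combined with minimality of $\Lambda\curvearrowright L_\Lambda$. The only real difference is how that element is produced --- the paper simply chooses one $s$ whose fixed points avoid the countably many points $g.t^{\pm\infty}$ ($g\in\Lambda$, $t\in T$) and then transports a contracting neighborhood by minimality, whereas you construct a suitable element inside each given open set via the ping-pong argument of Lemma \ref{Lem:dense} plus the closedness of $\mathcal{R}_{T}$; note that your digression on primitivity and distinct maximal elementary subgroups is unnecessary, since closedness of $\mathcal{R}_{T}$ already forces the limit pair $(s^{+\infty},t^{-\infty})$ into $\mathcal{R}_{T}$ and hence yields the contradiction.
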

\begin{proof}
Let $s$ be a torsion free element of $\Lambda$
whose fixed points are not equal to $g.t^{\pm\infty}$ for any $g\in \Lambda$ and $t\in T$.
Then $\pi(s^{+\infty})\neq \pi(s^{-\infty})$.
Hence, on the set $\pi(L_\Lambda\setminus\{s^{+\infty}\})$,
the sequence $(s^n.x)_{n=1}^\infty$ converges to $\pi(s^{+\infty})$ uniformly on compact subsets.
Thus for any neighborhood $U$ of $\pi(s^{+\infty})$ whose closure
does not contain $\pi(s^{-\infty})$,
there is $n\in \mathbb{N}$ such that
$s^n({\rm cl}(U))\subsetneq U$.
From the minimality of $\Lambda\curvearrowright L_{\Lambda}$,
now it is easy to conclude that the action is a locally boundary action.
\end{proof}
\begin{Thm}
Let $\Lambda$ be a subgroup of a hyperbolic group $\Gamma$.
Then there is a decreasing sequence of nuclear \Cs -subalgebras of $C(L_\Lambda)\rtimes \Lambda$
whose intersection is equal to ${\rm C}_{r}^\ast(\Lambda)$.
Moreover, if $\Lambda$ is ICC, then we can find such a sequence with
the terms Kirchberg algebras in the UCT class.
\end{Thm}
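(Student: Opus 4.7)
The plan is to take $B_n := C(X_n) \rtimes \Lambda$ inside $C(L_\Lambda)\rtimes\Lambda$, where each $X_n$ is a $\Lambda$-equivariant quotient of $L_\Lambda$ obtained by collapsing finitely many attractor/repeller pairs of torsion-free elements of $\Lambda$. One may assume $\Lambda$ is infinite (otherwise $L_\Lambda = \emptyset$ and there is nothing to prove); since a hyperbolic group contains no infinite torsion subgroup, $\Lambda$ has torsion-free elements, which I enumerate as $t_1, t_2, \ldots$, and set $T_n := \{t_1, \ldots, t_n\}$. Lemma \ref{Lem:Sep} shows that $\mathcal{R}_n := \mathcal{R}_{T_n} \cap (L_\Lambda \times L_\Lambda)$ is a $\Lambda$-invariant closed equivalence relation on $L_\Lambda$ with classes of size at most two, yielding a compact Hausdorff quotient $X_n := L_\Lambda/\mathcal{R}_n$. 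The equivariant surjection $L_\Lambda \to X_n$ dualizes to the embedding $C(X_n) \hookrightarrow C(L_\Lambda)$, realizing $B_n$ as a \Cs-subalgebra of $C(L_\Lambda) \rtimes \Lambda$; and the inclusion $\mathcal{R}_n \subset \mathcal{R}_{n+1}$ gives $B_{n+1} \subset B_n$.

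Amenability of $\Gamma \curvearrowright \partial\Gamma$ restricts to $\Lambda \curvearrowright L_\Lambda$, and Lemma \ref{Lem:amenable criteria} then shows each quotient action $\Lambda \curvearrowright X_n$ is amenable; Proposition \ref{Prop:ame} gives nuclearity of $B_n$. The inclusion $\mathrm{C}^\ast_r(\Lambda) \subset \bigcap_n B_n$ is immediate. Conversely, for $x \in \bigcap_n B_n$, every coefficient $E_g(x)$ is a continuous function on $L_\Lambda$ invariant under $\mathcal{R} := \bigcup_n \mathcal{R}_n$. When $\Lambda$ is non-elementary, Lemma \ref{Lem:dense} shows $\mathcal{R}$ is dense in $L_\Lambda \times L_\Lambda$, forcing $E_g(x)$ to be constant. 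When $\Lambda$ is virtually cyclic, $L_\Lambda$ is just the two points $t^{\pm\infty}$ that $\mathcal{R}$ identifies, and the same conclusion holds trivially. Hence $x \in \mathrm{C}^\ast_r(\Lambda)$.

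In the ICC case, the next step is to verify that each $B_n$ is a Kirchberg algebra in the UCT class. Lemma \ref{Lem:loc} supplies the locally boundary property on $X_n$, and Lemma \ref{Lem:top} gives minimality and topological freeness on $L_\Lambda$; minimality descends to $X_n$ along the continuous surjection. For topological freeness on $X_n$, note that for $g \in \Lambda \setminus \{e\}$, the preimage of the set of $g$-fixed points in $X_n$ equals the set of $g$-fixed points in $L_\Lambda$ (which has empty interior) together with a countable collection of attractor/repeller pairs swapped by $g$; since $L_\Lambda$ has no isolated points (as an infinite minimal system), this preimage has empty interior. Laca--Spielberg \cite{LaS} then yields simplicity and pure infiniteness, and together with nuclearity and separability this makes $B_n$ a Kirchberg algebra. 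The UCT property follows from Tu's theorem on amenable crossed products of commutative \Cs-algebras by countable discrete groups.

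The principal obstacle is identifying $\bigcap_n B_n$ with $\mathrm{C}^\ast_r(\Lambda)$, which rests entirely on the density assertion of Lemma \ref{Lem:dense}; the elementary case needs a brief separate treatment. A secondary subtlety is verifying that topological freeness survives the finite-to-one quotient $L_\Lambda \to X_n$ in the ICC case, where one must exploit the countability of the new attractor/repeller identifications together with the absence of isolated points.
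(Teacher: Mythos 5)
Your construction, the nuclearity argument, and the treatment of the ICC case all match the paper's proof (your explicit verification that topological freeness passes to the quotients $X_n$, via countability of the newly identified attractor/repeller pairs and the absence of isolated points, is a reasonable way to fill in what the paper only sketches as ``a similar proof to that of Lemma \ref{Lem:top}''). However, there is one genuine gap, and it occurs at the step you treat as automatic: the passage from ``every coefficient $E_g(x)$ is constant'' to ``$x\in {\rm C}^\ast_{r}(\Lambda)$''. For an element $x$ of a reduced crossed product $A\rtimes\Lambda$, the implication ``$E_g(x)\in X$ for all $g$ implies $x\in X\rtimes\Lambda$'' is \emph{not} a formal consequence of anything: by Proposition \ref{Prop:AP2} its validity for all coefficient algebras is equivalent to the AP of $\Lambda$, and for $X=\mathbb{C}$ it is precisely the ITAP-type question that the paper explicitly flags as open for groups without the AP. So after Lemma \ref{Lem:dense} (together with your separate treatment of the elementary case) you have only shown that $\bigcap_n B_n$ is contained in the set of elements of $C(L_\Lambda)\rtimes\Lambda$ all of whose coefficients are scalars, which a priori could be strictly larger than ${\rm C}^\ast_{r}(\Lambda)$.

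The missing ingredient is supplied in the paper by Ozawa's theorem that hyperbolic groups are weakly amenable \cite{Oz}: weak amenability passes to subgroups and implies the AP, so $\Lambda$ has the AP, and Proposition \ref{Prop:AP2} applied with $A=C(L_\Lambda)$ and $X=\mathbb{C}$ then converts ``all coefficients constant'' into $x\in\mathbb{C}\rtimes\Lambda={\rm C}^\ast_{r}(\Lambda)$. With this one application inserted, your argument is complete and is essentially the paper's.
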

\begin{proof}
Let $(\mathfrak{F}_n)_{n=1}^{\infty}$ be an increasing sequence of finite subsets of torsion free elements of $\Lambda$
whose union contains all torsion free elements.
Define $\mathcal{R}_n:=\mathcal{R}_{\mathfrak{F}_n}\cap (L_\Lambda \times L_{\Lambda})$ for each $n$.
Note that for each $n$, the quotient space $L_\Lambda/ \mathcal{R}_n$ is Hausdorff by Lemma \ref{Lem:Sep}.
Put $A_n:=C(L_{\Lambda}/\mathcal{R}_n)\rtimes \Lambda$.
Then by Lemma \ref{Lem:amenable criteria}, each $A_n$ is nuclear.
Moreover, by Lemma \ref{Lem:dense}, we have $\bigcap_{n=1}^\infty C(L_{\Lambda}/\mathcal{R}_n)=\mathbb{C}$.
Since every hyperbolic group is weakly amenable \cite{Oz}, we have the equality
$$\bigcap_{n=1}^\infty A_n ={\rm C}^\ast_{r} (\Lambda).$$
When $\Lambda$ is ICC, a similar proof to that of Lemma \ref{Lem:top} shows the topological freeness of $\Lambda\curvearrowright L_\Lambda/\mathcal{R}_n$.
Now by Lemma \ref{Lem:loc} and
Theorem 9 of \cite{LaS}, each $A_n$ is a Kirchberg algebra.
Since each $A_n$ is the groupoid \Cs -algebra of an amenable $\acute{{\rm e}}$tale groupoid,
they are in the UCT class by Tu's theorem \cite{Tu}.
\end{proof}

\section{Extensions of ${\rm C}_{r}^\ast(\mathbb{F}_d)$ by nuclear \Cs -algebras}\label{Sec:NE}
In this section, we prove Theorem \ref{Thmint:NE}.

We first consider the case $d$ is finite. We deal with the case $d=\infty$ in the end of this section.
Denote by $S$ the set of all canonical generators of $\mathbb{F}_d$.
Denote by $|\cdot |$ the length function on $\mathbb{F}_d$ determined by $S$.
To prove Theorem \ref{Thmint:NE}, first we compute the $K$-theory of the crossed product
$C(\partial\mathbb{F}_d/\mathcal{R}_S)\rtimes \mathbb{F}_d$.

We always use the following standard picture of the Gromov boundary $\partial \mathbb{F}_d$.
$$\partial\mathbb{F}_d:=\left\{ (x_n)_{n=1}^\infty\in \prod_{n\in \mathbb{N}} S\sqcup S^{-1}: x_n\neq x_{n+1}^{-1} {\rm\ for\ all\ } n\in \mathbb{N} \right\}$$
equipped with the relative product topology.
For $w\in \mathbb{F}_d$, we denote by $p[w]$
the characteristic function of the clopen set 
$$\left\{(x_n)_{n=1}^\infty \in \partial \mathbb{F}_d: x_1\cdots x_{|w|} =w\right\}$$
and set $q[w]:=p[w]+p[w^{-1}].$
Throughout this section, we identify $C(\partial\mathbb{F}_d/\mathcal{R}_S)$
with the $\mathbb{F}_d$-\Cs-subalgebra of $C(\partial\mathbb{F}_d)$ in the canonical way.
Under this identification, it is not difficult to check that for $s\in S$,
$q[s]$ is contained in $C(\partial \mathbb{F}_d/ \mathcal{R}_S)$.
We denote the action $\mathbb{F}_d\curvearrowright C(\partial \mathbb{F}_d)$
by $wf$ for $w\in \mathbb{F}_d$ and $f\in C(\partial\mathbb{F}_d)$.
\begin{Lem}\label{Lem:A1}
The \Cs -algebra $C(\partial\mathbb{F}_d/\mathcal{R}_S)$ is generated by the set 
$$\mathcal{P}:=\{wq[s]: w\in \mathbb{F}_d, s\in S\}.$$
In particular, the space $\partial\mathbb{F}_d/\mathcal{R}_S$
is homeomorphic to the Cantor set.
\end{Lem}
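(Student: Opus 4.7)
The plan is to establish the algebra claim via the Stone--Weierstrass theorem and then read off the Cantor set statement from the algebraic structure. Since each generator $wq[s]$ is a projection (the characteristic function of a clopen set), the \Cs-subalgebra of $C(\partial\mathbb{F}_d/\mathcal{R}_S)$ generated by $\mathcal{P}$ is automatically self-adjoint, and since $\sum_{s\in S}q[s]=1$ (every boundary sequence begins with a unique letter in $S\cup S^{-1}$), it is also unital. Hence the task reduces to verifying point separation.

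The key observation is that if $v$ is a reduced initial segment of $z\in \partial\mathbb{F}_d$, then $v^{-1}z = z_{|v|+1}z_{|v|+2}\cdots$ (the cancellation consumes all of the $v$-part), so $vq[s](z) = q[s](v^{-1}z)$ equals $1$ iff $z_{|v|+1}\in\{s,s^{-1}\}$. Given $[x]\neq[y]$, I take the smallest $n\geq 1$ with $x_n\neq y_n$ and set $w := x_{1:n-1}$, an initial segment of both $x$ and $y$. Thus $wq[s](x)$ equals $1$ iff $x_n\in\{s,s^{-1}\}$ and $wq[s](y)$ equals $1$ iff $y_n\in\{s,s^{-1}\}$. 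If $y_n\notin\{x_n,x_n^{-1}\}$, then $wq[s]$ with $\{s,s^{-1}\}=\{x_n,x_n^{-1}\}$ separates $x$ from $y$. Otherwise $y_n=x_n^{-1}$, and because $[x]\neq[y]$ excludes the possibility $\{x,y\}=\{wx_n^{+\infty},wx_n^{-\infty}\}$, I may assume $x\neq wx_n^{+\infty}$ (swapping roles otherwise). Then the smallest $k\geq 1$ with $x_{n+k}\neq x_n$ exists, and reducedness forces $x_{n+k}\notin\{x_n,x_n^{-1}\}$. Setting $w':=x_{1:n+k-1}$ and choosing $t\in S$ with $x_{n+k}\in\{t,t^{-1}\}$ (so $t\notin\{x_n,x_n^{-1}\}$), the observation gives $w'q[t](x)=1$, whereas a direct reduction $(w')^{-1}y = x_n^{-(k+1)}y_{n+1}y_{n+2}\cdots$ shows its first letter is $x_n^{-1}\notin\{t,t^{-1}\}$, so $w'q[t](y)=0$.

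For the Cantor set assertion, $\partial\mathbb{F}_d/\mathcal{R}_S$ is compact (quotient of a compact space), Hausdorff by Lemma~\ref{Lem:Sep}, metrizable since the countable set $\mathcal{P}$ generates a separable \Cs-algebra, totally disconnected since that algebra is generated by projections, and perfect since each $\mathcal{R}_S$-class has at most two elements while $\partial\mathbb{F}_d$ is a perfect Cantor set (for $d\geq 2$). Topological uniqueness of the Cantor set completes the proof. The main technical obstacle is the subcase $y_n=x_n^{-1}$, where the naive cylinder description of the support of $wq[s]$ fails and one must pass to a deeper initial segment $w'$ of $x$, carefully tracking the cancellations in $(w')^{-1}y$ to place $y$ outside the support.
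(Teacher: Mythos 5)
Your proof is correct and follows essentially the same route as the paper: Stone--Weierstrass reduces the algebra claim to point separation, which is achieved by translated projections $wq[s]$, with the key step in the delicate case being to pass to a deeper initial segment $w'$ that skips past the constant run $x_n^k$ so that the next letter of $x$ distinguishes it from $y$; the Cantor set statement then follows from the standard topological characterization. The only cosmetic difference is the organization of cases (you split on whether $y_n=x_n^{-1}$ at the first disagreement, while the paper splits on whether $x$ is of the form $ws^{+\infty}$), but the separating projections produced are the same.
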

\begin{proof}
By the Stone--Weierstrass theorem, it suffices to show that
the set $\mathcal{P}$ separates the points of $\partial \mathbb{F}_d/\mathcal{R}_S$.
Let $x=(x_n)_{n=1}^\infty$ and $y=(y_n)_{n=1}^\infty$ be two elements
in $\partial\mathbb{F}_d$ satisfying $(x, y)\not\in \mathcal{R}_S$.
If $x\not\in \{ws^{+\infty}: w\in \mathbb{F}_d, s\in S\sqcup S^{-1}\}$,
then take $n\in \mathbb{N}$ with $x_n \neq y_n$.
Let $m$ be the smallest integer greater than $n$
satisfying $x_{m}\neq x_n$ (which exists by assumption).
Then the projection $(x_1\cdots x_{m-1})(q[x_m])$ separates $x$ and $y$.
Next consider the case $x=zs^{+\infty}, y=wt^{+\infty}$, where $s, t\in S\sqcup S^{-1}$ and
$z$, $w$ are elements of $\mathbb{F}_d$ whose last alphabets are not equal to $s^{\pm 1}$, $t^{\pm 1}$, respectively.
Assume $|z|\geq |w|$.
Since the last alphabet of $z$ is neither $s$ nor $s^{-1}$, we have $(zq[s])\leq p[z]$ and $(zq[s])(x)=1$.
When $z\neq w$, the above inequality with the assumption $|z| \geq |w|$ shows $(zq[s])(y)=0$.
Otherwise we have $z=w$ and $s\neq t^{\pm 1}$,
which again yield $(zq[s])(y)=0$.
Therefore $\mathcal{P}$ satisfies the required condition.

The last assertion now follows from the following fact.
A topological space is homeomorphic to the Cantor set
if and only if it is compact, metrizable, totally disconnected, and does not have an isolated point.
\end{proof}
\begin{Lem}\label{Lem:A3}
The $K_0$-group of $C(\partial\mathbb{F}_d/\mathcal{R}_S)\rtimes \mathbb{F}_d$
is generated by $\{[q[s]]_0: s\in S\}$.
\end{Lem}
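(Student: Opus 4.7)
The plan is to compute $K_0(C(X)\rtimes\mathbb{F}_d)$, where $X:=\partial\mathbb{F}_d/\mathcal{R}_S$, via Pimsner's six-term exact sequence for the action of $\mathbb{F}_d$ on $C(X)$. Since $X$ is a Cantor set by Lemma \ref{Lem:A1}, we have $K_0(C(X))=C(X,\mathbb{Z})$ and $K_1(C(X))=0$, so the sequence collapses to an identification $K_0(C(X)\rtimes\mathbb{F}_d)\cong C(X,\mathbb{Z})_{\mathbb{F}_d}$, the coinvariants under the translation action. Under this isomorphism the class $[q[s]]_0$ corresponds to $[1_{q[s]}]$, and in the coinvariants $[1_{wq[s]}]=[1_{q[s]}]$ for every $w\in\mathbb{F}_d$.

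Let $G\subseteq C(X,\mathbb{Z})$ be the $\mathbb{F}_d$-invariant subgroup generated by $\{1_{wq[s]}:w\in\mathbb{F}_d,\,s\in S\}$. It suffices to prove $G=C(X,\mathbb{Z})$, since in the coinvariants every generator $[1_{wq[s]}]$ collapses to $[1_{q[s]}]$. The identities $\sum_{t\in S}wq[t]=1$ rewrite each complement as $\mathrm{supp}(wq[s])^c=\bigsqcup_{t\neq s}\mathrm{supp}(wq[t])$, so combined with Lemma \ref{Lem:A1} this shows that every clopen subset of $X$ is a finite disjoint union of ``pure intersections'' $\bigcap_{j=1}^n\mathrm{supp}(w_jq[s_j])$. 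The problem therefore reduces to proving $1_{\bigcap_j w_jq[s_j]}\in G$ for every finite family $\{(w_j,s_j)\}_{j=1}^n$.

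I plan to prove this by induction on $n$. By $\mathbb{F}_d$-invariance of $G$, after translating by $w_1^{-1}$ the two-factor case reduces to showing $1_{q[s_1]\cap vq[s_2]}\in G$ for $v\in\mathbb{F}_d$ and $s_1,s_2\in S$; this splits according to whether the last letter of $v$ cancels with $q[s_2]$. When $v=e$ or the last letter of $v$ lies outside $\{s_2,s_2^{-1}\}$, a direct reduced-word analysis shows $vq[s_2]$ is contained in a single $q[t]$, so the intersection with $q[s_1]$ is either empty or equal to $vq[s_2]$ itself. When the last letter of $v$ lies in $\{s_2,s_2^{-1}\}$, the relation $1_{vq[s_2]}=1-\sum_{t\neq s_2}1_{vq[t]}$ together with the observation that each $vq[t]$ ($t\in S\setminus\{s_2\}$) falls into the previous no-cancellation subcase gives $1_{q[s_1]\cap vq[s_2]}=1_{q[s_1]}-\sum_{t\neq s_2}1_{q[s_1]\cap vq[t]}\in G$. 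The inductive step for $n\geq 3$ follows by writing the $(n-1)$-fold intersection as an integer combination of $\{1_{w'q[s']}\}$ via the inductive hypothesis, multiplying by $1_{w_nq[s_n]}$, and invoking the two-factor case on each summand. The main obstacle is precisely this combinatorial case analysis, since cancellations in the free group make $vq[s_2]$ a clopen set spread across many cylinders; the partition identity $\sum_t wq[t]=1$ is the key that reduces the cancellation case to the elementary no-cancellation subcase and closes the induction.
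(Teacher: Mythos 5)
Your argument is correct, and while it shares its skeleton with the paper's proof, the combinatorial heart is genuinely different. Both proofs begin by using the Pimsner--Voiculescu sequence together with $K_1(C(X))=0$ for the Cantor set $X=\partial\mathbb{F}_d/\mathcal{R}_S$ to see that $C(X,\mathbb{Z})\to K_0(C(X)\rtimes\mathbb{F}_d)$ is surjective with kernel the augmentation subgroup, so that $K_0$ is the group of coinvariants. The divergence is in how one then shows the classes $[q[s]]_0$ generate. The paper takes an arbitrary projection $r\in C(X)$, writes it as $\sum_{w\in F}p[w]$ with all words in $F$ of equal length, observes that $\mathcal{R}_S$-invariance of $r$ partitions $F$ into pairs $\{w,\hat w\}$ with $p[w]+p[\hat w]$ a translate of some $q[s^n]$, and then reduces $[q[s^n]]_0$ to $[q[s]]_0$ by the explicit recursions $q[s^2]=sq[s]+s^{-1}q[s]+q[s]-2$ and $q[s^k]=sq[s^{k-1}]+s^{-1}q[s^{k-1}]-q[s^{k-2}]$. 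You instead prove the a priori stronger statement that the $\mathbb{F}_d$-invariant subgroup $G$ generated by the $1_{wq[s]}$ is all of $C(X,\mathbb{Z})$, by showing $G$ is closed under intersections via your induction, with the partition identity $\sum_{t\in S}wq[t]=1$ doing double duty (handling complements and reducing the cancellation case to the no-cancellation case). Your route never needs to describe what an $\mathcal{R}_S$-invariant clopen set looks like, nor any recursion for $q[s^n]$; the price is the case analysis on reduced words and the induction on the number of factors. The paper's route is shorter precisely because it exploits the structure of invariant clopen sets, which makes only the one-variable recursion necessary. Both arguments are complete and each step of yours checks out (in particular, the reduction of arbitrary clopen sets to disjoint unions of pure intersections via Stone duality, and the fact that the two generators $q[s_1]$ and $vq[t]$ have nested or disjoint supports in the no-cancellation case, are both sound).
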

\begin{proof}
By Lemma \ref{Lem:A1} and the Pimsner--Voiculescu exact sequence \cite{PV},
the $K_0$-group is generated by the elements represented by a projection in $C(\partial\mathbb{F}_d/\mathcal{R}_S)$.
Let $r$ be a projection in $C(\partial\mathbb{F}_d/\mathcal{R}_S)$.
Then $r$ can be presented as a sum
$\sum_{w\in F} p[w]$,
where $F$ is a subset of $\mathbb{F}_d\setminus\{e\}$ whose elements have the same lengths.
Let $w$ be an element of $\mathbb{F}_d$ whose reduced form is $s_{1}^{n(1)}\cdots s_{k}^{n(k)}$,
where $s_i\in S\sqcup S^{-1}$, $n(i)\in \mathbb{N}$, and $s_i\neq s_{i+1}$ for all $i$.
We define
$\hat{w} \in \mathbb{F}_d$ by $s_{1}^{n(1)}\cdots s_{k-1}^{n(k-1)}s_{k}^{-n(k)}$.
We will show that $w\in F$ implies $\hat{w}\in F$.
Indeed, if $w\in F$, then $r(ws_{k}^{+\infty})=1$.
Hence we must have $r(ws_{k}^{-\infty})=1$.
This implies $\hat{w}\in F$ as desired.
Since $w\neq \hat{w}$ and $[p[w]+p[\hat{w}]]_0=[q[s_{k}^{n(k)}]]_0$,
it suffices to show that for $s\in S$ and $n\in \mathbb{N}$, the element $[q[s^n]]_0$
is contained in the subgroup generated by $[q[s]]_0, s\in S$.
This follows from the equations
$$q[s^2]=sq[s]+s^{-1}q[s]+q[s]-2$$
\begin{center}
and
\end{center}
$$q[s^k]=sq[s^{k-1}]+s^{-1}q[s^{k-1}]-q[s^{k-2}]$$
for $s\in S$ and $k>2$.
\end{proof}
We denote the triplet $(K_0, [1]_0, K_1)$ by $K_\ast$.
\begin{Thm}\label{Thm:K}
The
$K_\ast(C(\partial\mathbb{F}_d/\mathcal{R}_S)\rtimes \mathbb{F}_d)$
is isomorphic to $(\mathbb{Z}^d, (1, 1, \ldots, 1), \mathbb{Z}^d)$.
\end{Thm}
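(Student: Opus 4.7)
The plan is to apply the Pimsner--Voiculescu exact sequence for $\mathbb{F}_d$-crossed products to the action $\mathbb{F}_d\curvearrowright C(X)$, where $X:=\partial\mathbb{F}_d/\mathcal{R}_S$. By Lemma~\ref{Lem:A1}, $X$ is homeomorphic to the Cantor set, so $K_0(C(X))=C(X,\mathbb{Z})$ and $K_1(C(X))=0$, and the six-term sequence collapses to
$$0\to K_1(C(X)\rtimes\mathbb{F}_d)\to C(X,\mathbb{Z})^d\xrightarrow{\phi}C(X,\mathbb{Z})\to K_0(C(X)\rtimes\mathbb{F}_d)\to 0,$$
where $\phi((\xi_s)_{s\in S})=\sum_{s\in S}(\xi_s-s^{-1}\cdot\xi_s)$. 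Thus the task reduces to identifying $\coker(\phi)\cong\mathbb{Z}^d$ with $[1]_0$ going to $(1,\ldots,1)$, and $\ker(\phi)\cong\mathbb{Z}^d$.

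For $K_0$, Lemma~\ref{Lem:A3} supplies a surjection $\mathbb{Z}^d\twoheadrightarrow K_0(C(X)\rtimes\mathbb{F}_d)$, $e_s\mapsto[q[s]]_0$; moreover, the partition of unity $\sum_{s\in S}q[s]=\sum_{s\in S\sqcup S^{-1}}p[s]=1$ shows $(1,\ldots,1)\mapsto[1]_0$. To prove this surjection is injective, I construct for each $s\in S$ an $\mathbb{F}_d$-coinvariant homomorphism $\tau_s\colon C(X,\mathbb{Z})\to\mathbb{Z}$ (one factoring through $\coker(\phi)$) satisfying $\tau_s([q[s']]_0)=\delta_{s,s'}$. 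My candidate: use the identifications $[gs^{\pm\infty}]$ defining $\mathcal{R}_S$ to define $\tau_s$ as a signed count of the $s$-labelled identifications appearing in the support of a given locally constant function, with signs arranged so that every coboundary $(1-t^{-1})\xi$ contributes zero.

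For $K_1$, the naturality of the Pimsner--Voiculescu sequence applied to the $\mathbb{F}_d$-equivariant unital inclusion $\mathbb{C}\hookrightarrow C(X)$ (with trivial action on $\mathbb{C}$) gives a canonical morphism
$$\mathbb{Z}^d=K_1(C^*_r(\mathbb{F}_d))\longrightarrow K_1(C(X)\rtimes\mathbb{F}_d)$$
sending $(n_s)_s$ to the constant tuple $(n_s\chi_X)_s\in\ker(\phi)$. Injectivity is immediate. For surjectivity I use the observation that each torsion-free $s\in\mathbb{F}_d$ has a unique fixed point $[s^{+\infty}]=[s^{-\infty}]$ on $X$, which is the global attractor of positive $s$-iteration; hence the only $s$-invariant clopen subsets of $X$ are $\emptyset$ and $X$, so every $s$-invariant element of $C(X,\mathbb{Z})$ is a constant multiple of $\chi_X$. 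A cohomological reduction argument then shows that an arbitrary $(\xi_s)_s\in\ker(\phi)$ is equivalent to its constant part.

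The main obstacle is the explicit construction of the invariant functionals $\tau_s$ for the $K_0$-side and the final reduction step on the $K_1$-side. Both require careful combinatorial handling of the infinitely generated $\mathbb{F}_d$-module $C(X,\mathbb{Z})$. The key technical input is the recurrence relations for $[q[s^k]]_0$ derived in the proof of Lemma~\ref{Lem:A3}, which reduce any cylinder-set indicator to the basic projections $\{q[s]\}_{s\in S}$ modulo the relations imposed by $\mathrm{im}(\phi)$.
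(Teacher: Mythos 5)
Your setup is the same as the paper's (the Pimsner--Voiculescu sequence for $C(X)\rtimes\mathbb{F}_d$ with $X=\partial\mathbb{F}_d/\mathcal{R}_S$, reduction of $K_0$ to the classes $[q[s]]_0$ via Lemma \ref{Lem:A3}, and the identity $\sum_{s\in S}[q[s]]_0=[1]_0$), but the two steps that actually constitute the proof are left as plans, and the sketches you give for them do not work as stated. For $K_0$, the dual functionals $\tau_s$ exist abstractly \emph{if} the theorem is true, so proposing to ``construct'' them is circular unless you actually produce them; your candidate (a signed count of $s$-labelled identifications in the support of a locally constant function) is not well defined as written, since the support of a nonzero locally constant function on $X$ contains infinitely many identified pairs $\{[gs^{+\infty}]\}$, and you give no mechanism forcing such a count to converge or to kill $\mathrm{im}(\phi)$. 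For $K_1$, your argument computes the wrong object: the kernel condition $\sum_{s\in S}(\xi_s-s\xi_s)=0$ is a single equation coupling all coordinates, not the $d$ separate conditions $\xi_s=s\xi_s$; so the observation that each generator has no proper invariant clopen set does not show $\ker(\phi)$ consists of constant tuples, and the ``cohomological reduction argument'' that would finish the job is exactly the missing content.

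The paper closes both gaps with one device you do not use: it compares $\phi$ (called $\tau$ there) with the analogous map $\eta$ on $C(\partial\mathbb{F}_d,\mathbb{Z})^{\oplus S}$ for the \emph{full} boundary, whose kernel and image are known from the computation of $K_\ast(C(\partial\mathbb{F}_d)\rtimes\mathbb{F}_d)$ by Cuntz, Pimsner--Voiculescu and Spielberg. Concretely, $\ker(\eta)$ is exactly the constant tuples (giving $\ker(\phi)=\ker(\eta)\cong\mathbb{Z}^d$ for free), and for a nontrivial combination $r=\sum_s n(s)q[s]$ one either has $r\notin\mathrm{im}(\eta)$ for congruence reasons mod $d-1$, or one writes down the explicit preimage $\eta^{-1}(\{r\})=(g_s)_s+\ker(\eta)$ with $g_s=(n(s)-m)p[s^{-1}]$ and observes that it misses $C(X,\mathbb{Z})^{\oplus S}$ because $p[s^{-1}]$ is not $\mathcal{R}_S$-invariant while $q[s]$ is. I would suggest you adopt this comparison: it converts both of your open combinatorial problems into a single appeal to the known $K$-theory of the boundary crossed product plus an elementary check of which functions descend to the quotient.
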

\begin{proof}
We first compute the pair $(K_0, [1]_0)$.
By Lemma \ref{Lem:A3}, it suffices to show the linear independence of
the family $([q[s]]_0)_{s \in S}$.
Let $$\eta\colon C(\partial\mathbb{F}_d, \mathbb{Z})^{\oplus S} \rightarrow C(\partial\mathbb{F}_d, \mathbb{Z})$$
be the additive map defined by
$(f_s)_{s\in S}\mapsto \sum_{s\in S} (f_s-sf_s)$ and denote by $\tau$ the restriction of $\eta$
to $C(\partial\mathbb{F}_d/\mathcal{R}_S, \mathbb{Z})^{\oplus S}$.
Then the Pimsner--Voiculescu exact sequence \cite{PV} shows that
the canonical map
$$C(\partial\mathbb{F}_d/\mathcal{R}_S, \mathbb{Z}) \rightarrow K_0(C(\partial\mathbb{F}_d/\mathcal{R}_S)\rtimes \mathbb{F}_d)$$
is surjective and its kernel is equal to ${\rm im}(\tau)$.
Hence it suffices to show that ${\rm im}(\tau)$ does not contain
a nontrivial linear combination of the projections $q[s], s\in S$.
The isomorphisms $\ker(\eta)\cong K_1(C(\partial\mathbb{F}_d)\rtimes \mathbb{F}_d)\cong \mathbb{Z}^d$ (see \cite{Cun, PV, Spi})
show that $\ker(\eta)=\{(f_s)_{s\in S}: {\rm each\ }f_s{\rm\ is\ constant}\}$.
Now let $r=\sum_{s\in S}n(s)q[s]$ be a nontrivial linear combination of $q[s]$'s.
If $\sum_{s\in S} n(s) \not\equiv 0 \mod (d-1)$, then
$r \not \in {\rm im}(\eta)$ by \cite{Cun, Spi}.
(See also the second paragraph of \cite[Section 4]{Suz} for the detail.)
If $\sum_{s\in S} n(s) = (d-1)m$ for some $m\in \mathbb{Z}$,
then $\sum_{s\in S}n(s)q[s]=\eta((g_{s})_{s \in S})$,
where $g_s:=(n(s)-m)p[s^{-1}]$ for $s\in S$.
Hence $\eta^{-1}(\{r\})=(g_s)_{s\in S}+ \ker(\eta)$,
which does not intersect with $C(\partial\mathbb{F}_d/\mathcal{R}_S, \mathbb{Z})^{\oplus S}$.
Thus we have $r\not\in {\rm im}(\tau)$ in either case.

The isomorphism of the $K_1$-group follows from the Pimsner--Voiculescu exact sequence \cite{PV}
and the equality $\ker(\tau)=\ker(\eta)$.
\end{proof}

\begin{proof}[Proof of Theorem \ref{Thmint:NE}: the case $d$ is finite]
Let $A$ be a stable separable nuclear \Cs -algebra.
Let
$$\iota\colon  {\rm C}_{r}^\ast(\mathbb{F}_d)\rightarrow  C(\partial\mathbb{F}_d/\mathcal{R}_S)\rtimes \mathbb{F}_d$$
be the inclusion map.
Then the proof of Theorem \ref{Thm:K} yields that the homomorphism
$\iota_{\ast, 0}$ has a left inverse and the homomorphism $\iota_{\ast, 1}$ is an isomorphism.
Consequently, the homomorphism
$${\rm Hom}(K_i(C(\partial\mathbb{F}_d/\mathcal{R}_S)\rtimes\mathbb{F}_d), K_{1-i}(A))\rightarrow {\rm Hom}(K_i({\rm C}_{r}^\ast(\mathbb{F}_d)), K_{1-i}(A))$$ induced from $\iota$ is surjective for $i=0, 1$.
Recall that both ${\rm C}_{r}^\ast(\mathbb{F}_d)$ and $C(\partial\mathbb{F}_d/\mathcal{R}_S)\rtimes\mathbb{F}_d$ satisfy the universal coefficient theorem \cite[Corollary 7.2]{RS}.
Since $K_i({\rm C}_{r}^\ast(\mathbb{F}_d))$ is a free $\mathbb{Z}$-module for $i=0, 1$,
the universal coefficient theorem \cite{RS} yields that the canonical homomorphism
$${\rm Ext}({\rm C}_{r}^\ast(\mathbb{F}_d), A)^{-1}\rightarrow \bigoplus_{i=0, 1}{\rm Hom}(K_i({\rm C}_{r}^\ast(\mathbb{F}_d)), K_{1-i}(A))$$
is an isomorphism.
Combining these facts, we see that the homomorphism
$$\iota^\ast \colon {\rm Ext}(C(\partial\mathbb{F}_d/\mathcal{R}_S)\rtimes \mathbb{F}_d, A)\rightarrow {\rm Ext}({\rm C}_{r}^\ast(\mathbb{F}_d), A)^{-1}$$
induced from $\iota$ is surjective.

Now let $B$ be the exact \Cs -algebra obtained by an extension
$\sigma$ of ${\rm C}_{r}^\ast(\mathbb{F}_d)$ by $A$ which is either absorbing or unital absorbing.
Since $A$ is nuclear and ${\rm C}_{r}^\ast(\mathbb{F}_d)$ is exact,
the Effros--Haagerup lifting theorem \cite[Theorem B and Prop.~5.5]{EH} shows that $[\sigma] \in {\rm Ext}({\rm C}_{r}^\ast(\mathbb{F}_d), A)$ is invertible in the semigroup ${\rm Ext}({\rm C}_{r}^\ast(\mathbb{F}_d), A)$.
Note that in either case,
the direct sum $\sigma\oplus 0$ is absorbing.
Thus, by the surjectivity of $\iota^\ast$ and the unicity of absorbing representative, the direct sum $\sigma\oplus 0$ extends to a $\ast$-homomorphism
$\varphi\colon C(\partial\mathbb{F}_d/\mathcal{R}_S)\rtimes \mathbb{F}_d \rightarrow \mathbb{M}_2(Q(A)).$
Then, since $\varphi(1)=\sigma(1)\oplus 0\leq 1\oplus 0$,
the map
$$\tilde{\sigma}\colon C(\partial\mathbb{F}_d/\mathcal{R}_S)\rtimes \mathbb{F}_d \ni x \mapsto \varphi(x)_{1, 1}\in Q(A)$$
defines a $\ast$-homomorphism which extends $\sigma$.

We next show that $B$
is realized as a decreasing intersection of separable nuclear \Cs -algebras.
Take a decreasing sequence $(A_n)_{n=1}^\infty$ of nuclear \Cs-subalgebras of $C(\partial\mathbb{F}_d/\mathcal{R}_S)\rtimes \mathbb{F}_d$ whose intersection is equal to ${\rm C}_{r}^\ast(\mathbb{F}_d)$.
Put $B_n:=\pi^{-1}(\tilde{\sigma}(A_n))$ for each $n$,
where $\pi \colon M(A)\rightarrow Q(A)$ denotes the quotient map.
Then, since nuclearity is preserved under taking the extension,
each $B_n$ is nuclear.
Moreover, we have
$$\bigcap_{n=1}^\infty B_n =\bigcap_{n=1}^\infty \pi ^{-1}(\tilde{\sigma}(A_n))=B.$$

For the unital case, the rest of the proof is similarly done to the proof of Theorem \ref{Thmint:Main}.
For the non-unital case, let $(B_n)_{n=1}^{\infty}$ be a decreasing sequence of separable nuclear
\Cs -algebras whose intersection is $B$. 
Denote by $1$ the unit of the unitization $\widetilde{B_1}$ of $B_1$.
Define \Cs -subalgebras $C_n$ of $\widetilde{{B_1}}\oplus \ell^{\infty}(\mathbb{N})$
by
$$C_n:={\mathrm C}^\ast(B_n, \{ 1\oplus p_k: k\in \mathbb{N}\}),$$
where $p_k$ is the characteristic function of the set $\{l\in \mathbb{N}:l\geq k\}$.
Set $D_n:=C_n\otimes \bigotimes_{k=n}^{\infty}C(X)$ for each $n$,
where $X$ is a compact metrizable space consisting at least two points.
Take a faithful state $\phi$ on $C_1$ and a faithful measure $\mu$ on $X$.
Then define a state $\varphi$ on $D_1$
by $\varphi:=\phi\otimes\bigotimes_{k=1}^{\infty}\mu$.
Now take a pure state $\psi$ on $\mathbb{M}_2$
and define
\[E_n:=q_n\left((D_n, \varphi|_{D_n})\ast \left( \bigfp_{k=n}^{\infty}(\mathbb{M}_2, \psi)\right)\right)q_n,\]
where $q_n:=(1\oplus p_n)\in D_n$.
Then, being as a corner of a simple unital separable nuclear \Cs -algebra,
each $E_n$ also has these properties.
Now put $F_n:=E_n\otimes\bigotimes_{k=n}^{\infty}\mathcal{O}_2$.
Then each $F_n$ is isomorphic to $\mathcal{O}_2$ \cite{KP}.
Now by a similar argument to that in the last paragraph of the proof of Theorem \ref{Thmint:Main},
we have $\bigcap_{n=1}^\infty F_n =B$.

Finally, when $A=\mathbb{K}$,
by Voiculescu's theorem \cite{Vo}, any essential unital extension is unital absorbing
and any essential non-unital extension is absorbing.
Moreover, since ${\rm C}_{r}^\ast(\mathbb{F}_d)$ is simple \cite{Po}, the only non-essential extension is the zero extension
${\rm C}_{r}^\ast(\mathbb{F}_d)\oplus \mathbb{K}$. In this case, the claim follows from the above argument.
\end{proof}
We remark that in the proofs of Theorems \ref{Thmint:Main} and \ref{Thmint:NE},
the following is implicitly proved. Here we record it as a proposition.

\begin{Prop}
Let $A$ be a $($possibly non-unital$)$ \Cs-algebra which is realized as a decreasing intersection of separable nuclear \Cs -algebras.
Then it is realized as a decreasing intersection of isomorphs of the Cuntz algebra $\mathcal{O}_2$.
\end{Prop}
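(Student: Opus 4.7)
The proof will unpack the content already implicit in the proofs of Theorems \ref{Thmint:Main} and \ref{Thmint:NE}: both arguments begin with an abstract decreasing sequence $(A_n)_{n=1}^\infty$ of separable nuclear C*-algebras with $\bigcap_n A_n = A$ and promote it to a decreasing sequence of isomorphs of $\mathcal{O}_2$ with the same intersection, using only the existence of such a sequence and not any special feature of $A$. So the plan is to isolate that mechanism and observe that it proves the proposition.

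Assume first that $A$ is unital (so the $A_n$ may be chosen unital with common unit). Fix a faithful state $\nu$ on $A_1$, a compact metric space $Y$ with at least two points carrying a faithful probability measure $\mu$, and a pure state $\psi$ on $\mathbb{M}_2$. Form
\[
B_n := \Bigl(\bigotimes_{k=n}^\infty C(Y)\Bigr) \otimes A_n, \qquad C_n := (B_n, \varphi_n) \ast \Bigl(\bigfp_{k=n}^\infty (\mathbb{M}_2, \psi)\Bigr),
\]
where $\varphi_n$ is the restriction to $B_n$ of the faithful product state $\bigl(\bigotimes_k \mu\bigr)\otimes \nu$ on $B_1$. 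Since the diffuse abelian algebra $\bigotimes_k C(Y)$ sits in the centralizer of $\varphi_n$, Theorems \ref{Thm:DS} and \ref{Thm:Dy} make each $C_n$ simple, unital, separable, and nuclear. The conditional expectations of \cite[Theorem 4.8.5]{BO} from $C_1$ onto the finite-stage free products map $C_{n+1}$ onto $B_{n+1}$, so $\bigcap_n C_n = \bigcap_n B_n = \bigcap_n A_n = A$ (using the trivial fact $\bigcap_n \bigotimes_{k\ge n} C(Y) = \mathbb{C}$). Setting $D_n := C_n \otimes \bigotimes_{k=n}^\infty \mathcal{O}_2$, Kirchberg--Phillips gives $D_n \cong \mathcal{O}_2$, and the pointwise-convergent tail-slicing maps $\id_{C_1}\otimes \bigotimes_{k\le n}\id_{\mathcal{O}_2}\otimes \bigotimes_{k>n}\xi$ (for a fixed state $\xi$ on $\mathcal{O}_2$) yield $\bigcap_n D_n = A$.

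For non-unital $A$, I will apply the corner device from the end of the proof of Theorem \ref{Thmint:NE}. Working inside $\widetilde{A_1}\oplus \ell^\infty(\mathbb{N})$, enlarge each $A_n$ to the unital separable nuclear C*-algebra $\tilde A_n := \mathrm{C}^\ast\bigl(A_n, \{1\oplus p_k : k \in \mathbb{N}\}\bigr)$, where $p_k$ is the characteristic function of $\{\ell \ge k\}$. Run the tensor-and-free-product construction on $(\tilde A_n)$ exactly as in the unital case, and then compress the resulting simple unital nuclear algebras by the projection $q_n := 1\oplus p_n$; full corners of simple unital separable nuclear C*-algebras retain all four of those properties. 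Finally tensor with $\bigotimes_{k=n}^\infty \mathcal{O}_2$ to obtain terms $F_n\cong \mathcal{O}_2$ with $\bigcap_n F_n = A$. The one place I expect to have to argue carefully, and the main obstacle, is bookkeeping that the decreasing intersection is preserved through the corner compression and the free-product step simultaneously in the non-unital setting: this reduces to the fact that every element of $A$ is eventually absorbed by $q_n A q_n$ and that the conditional expectations of \cite[Theorem 4.8.5]{BO} restrict sensibly to the corners, both of which are built into the way the projections $1\oplus p_k$ were adjoined. Once that is done, the construction yields the required decreasing sequence of $\mathcal{O}_2$'s.
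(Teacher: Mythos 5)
Your proposal is correct and follows essentially the same route as the paper, which itself only records this proposition as being ``implicitly proved'' by the arguments of Theorems \ref{Thmint:Main} and \ref{Thmint:NE}: the unital case via tensoring with $\bigotimes_k C(Y)$, free products with copies of $(\mathbb{M}_2,\psi)$, and $\mathcal{O}_2$-absorption, and the non-unital case via the unitization inside $\widetilde{A_1}\oplus\ell^\infty(\mathbb{N})$ followed by compression with the corners $q_n=1\oplus p_n$. The one point you add beyond the paper --- replacing $A_n$ by $eA_ne$ when $A$ is unital --- is a harmless and correct reduction, and the delicate step you flag (that the corner compression kills exactly the adjoined unit and the $c_0$-tail, leaving $\bigcap_n F_n=A$) is handled at the same level of detail as in the paper.
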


Now consider the case $d=\infty$.
\begin{proof}[Proof of Theorem \ref{Thmint:NE}: the case $d=\infty$]
Let $\Lambda$ be the commutator subgroup of $\mathbb{F}_2$. 
Then $\Lambda$ is isomorphic to $\mathbb{F}_\infty$.
Therefore we only need to show the claim for $\Lambda$.
Let $S$ be the canonical generator of $\mathbb{F}_2$
and consider the restriction $\alpha$ of the action $\mathbb{F}_2\curvearrowright \partial \mathbb{F}_2/\mathcal{R}_S$
to $\Lambda$.
Let $$\iota\colon {\rm C}^\ast_{r}(\Lambda)\rightarrow C(\partial \mathbb{F}_2/\mathcal{R}_S)\rtimes \Lambda$$
denote the inclusion.
We will show that the induced homomorphism $\iota_\ast$ on the K-theory
is left invertible.
To show the claim for the $K_0$-group, consider the following inclusion map
$$\tilde{\iota} \colon{\rm C}^\ast_{r}(\Lambda)\rightarrow C(\partial \mathbb{F}_2/\mathcal{R}_S)\rtimes \mathbb{F}_2.$$
Then by Theorem \ref{Thm:K}, the homomorphism $\tilde{\iota}_{\ast, 0}$ is left invertible.
This proves the left invertibility of $\iota_{\ast, 0}$. 

To show the claim for the $K_1$-group, first take a free basis $A$ of $\Lambda\cong \mathbb{F}_\infty$.
Define the homomorphism
$$\eta\colon C(\partial \mathbb{F}_2/\mathcal{R}_S, \mathbb{Z})^{\oplus A} \rightarrow C(\partial \mathbb{F}_2/\mathcal{R}_S, \mathbb{Z})$$
by $\eta((f_a)_{a\in A}):=\sum_{a\in A} (f_a-af_a).$
Then by the Pimsner--Voiculescu six term exact sequence,
we obtain an isomorphism
$$K_1(C(\partial \mathbb{F}_2/\mathcal{R}_S)\rtimes \Lambda)\cong \ker (\eta)$$
which maps $[u_a]_1$ to $(\delta_{a, b}1)_{b\in A}$
for each $a\in A$.
Since the subgroup generated by $1$ is a direct summand of the group $C(\partial \mathbb{F}_2/\mathcal{R}_S, \mathbb{Z})$,
the homomorphism $\mathbb{Z}^{\oplus A}\rightarrow \ker (\eta)$
given by $\delta_a \mapsto(\delta_{a, b}1)_{b\in A}$ is left invertible.
Consequently, the homomorphism $\iota_{\ast, 1}$ is left invertible.
Now the rest of the proof is similarly done to the case $d$ is finite.
\end{proof}
\begin{Rem}
Let $\sigma$ be a Busby invariant as before.
Denote by $$\varphi_i\colon K_i({\rm C}^\ast_{r}(\mathbb{F}_d))\rightarrow K_{1-i}(A)$$ the homomorphisms
corresponding to $\sigma$.
Then the six-term exact sequence gives the formula of the $K$-groups of the extension $B$ as follows.
$$K_i(B)\cong {\rm coker}(\varphi_{1-i})\oplus \mathbb{Z}^{\oplus d_i} (i=0, 1)$$
where $d_i:={\rm rank}(\ker(\varphi_i))$.
Furthermore, when $B$ is unital, the unit element $[1]_0$ corresponds to the element
\[u=\left\{ \begin{array}{ll}
0 & {\rm if\ } d_0=0, \\
0\oplus (1, 0 ,\ldots, 0) & {\rm otherwise}.\\
\end{array} \right.\]
\end{Rem}

Next consider two triplets $(G_0^{(j)}, u^{(j)}, G_1^{(j)}), j=1, 2,$ 
where $G_i^{(j)}$ are countable abelian groups and $u^{(j)}\in G_0^{(j)}$.
Then, by Theorem 4.1 of \cite{Phi}, every homomorphism between them is implemented by
a unital $\ast$-homomorphism between unital Kirchberg algebras in the UCT class.
Combining this fact with our results in this section, we obtain the following consequence.
\begin{Cor}\label{Cor:KK}
For any countable free group $\mathbb{F}$, there is a unital embedding
of ${\rm C}^\ast_{r}(\mathbb{F})$
into a Kirchberg algebra which implements the $KK$-equivalence.
\end{Cor}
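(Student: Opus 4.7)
Plan. The strategy is to factor the desired unital embedding through the ambient Kirchberg algebra from Section~\ref{Sec:gen} and then invoke Phillips's classification to project onto a Kirchberg algebra with the $K$-theory triple of ${\rm C}^\ast_r(\mathbb{F})$. The low-rank cases $d = 0, 1$ (where ${\rm C}^\ast_r(\mathbb{F})$ is $\mathbb{C}$ or $C(\mathbb{T})$) can be dispatched by directly exhibiting a unital embedding into a unital Kirchberg algebra in the UCT class with matching Elliott triple, so we focus on $d \geq 2$.

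For finite $d \geq 2$, take $E := C(\partial\mathbb{F}_d/\mathcal{R}_S) \rtimes \mathbb{F}_d$, which by Section~\ref{Sec:gen} is a unital Kirchberg algebra in the UCT class. The canonical inclusion $\iota \colon {\rm C}^\ast_r(\mathbb{F}_d) \hookrightarrow E$ is unital, and by the $K$-theory analysis carried out in the proof of Theorem~\ref{Thmint:NE} (culminating in Theorem~\ref{Thm:K}), both $\iota_{\ast, 0}$ and $\iota_{\ast, 1}$ admit left inverses in the category of abelian groups. Choosing such left inverses (any left inverse of $\iota_{\ast, 0}$ automatically sends $[1_E]_0$ to $[1]_0$ since $\iota$ is unital) yields a homomorphism of Elliott triples
\[ r \colon (K_0(E), [1_E]_0, K_1(E)) \to (K_0({\rm C}^\ast_r(\mathbb{F}_d)), [1]_0, K_1({\rm C}^\ast_r(\mathbb{F}_d))) \]
satisfying $r \circ \iota_\ast = \mathrm{id}$. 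For $d = \infty$, take $\Lambda := [\mathbb{F}_2, \mathbb{F}_2] \cong \mathbb{F}_\infty$ and let $E$ be a Kirchberg algebra in the UCT class obtained from the Section~\ref{Sec:gen} construction applied to the ICC subgroup $\Lambda$ of the hyperbolic group $\mathbb{F}_2$; the $K$-theory retraction $r$ is then supplied by the $d = \infty$ argument in the proof of Theorem~\ref{Thmint:NE} (passing through the crossed product by the full $\mathbb{F}_2$ for $K_0$, and the direct Pimsner--Voiculescu calculation for $K_1$).

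Now apply Phillips's classification (Theorem~4.1 of~\cite{Phi}, recalled in the paragraph preceding the corollary) to the triple homomorphism $r$: this produces unital Kirchberg algebras in the UCT class realising the two Elliott triples, together with a unital $\ast$-homomorphism implementing $r$. By the Kirchberg--Phillips isomorphism theorem, the source of this homomorphism is (K-theory-preservingly) unitally isomorphic to $E$, so we may regard the output as a unital $\ast$-homomorphism $\phi \colon E \to D$, where $D$ denotes the target Kirchberg algebra. Set $\psi := \phi \circ \iota \colon {\rm C}^\ast_r(\mathbb{F}) \to D$. Then $\psi_\ast = r \circ \iota_\ast = \mathrm{id}$ on $K$-theory, and since both source and target satisfy the UCT, $[\psi]$ is invertible in $KK({\rm C}^\ast_r(\mathbb{F}), D)$. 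Finally, the simplicity of ${\rm C}^\ast_r(\mathbb{F})$ (Powers) forces the unital $\ast$-homomorphism $\psi$ to be injective, yielding the required unital embedding that implements the $KK$-equivalence.

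The main obstacle is the $d = \infty$ case: one must pin down an ambient Kirchberg algebra $E$ in the UCT class that simultaneously contains ${\rm C}^\ast_r(\mathbb{F}_\infty)$ unitally and whose $K$-theory admits the required triple retraction onto $K_\ast({\rm C}^\ast_r(\mathbb{F}_\infty))$. The algebra $C(\partial\mathbb{F}_2/\mathcal{R}_S) \rtimes \Lambda$ used in Section~\ref{Sec:NE} is the natural $K$-theoretic candidate but requires a separate check of minimality and pure infiniteness for the restricted $\Lambda$-action; alternatively, one uses the quotient construction $C(L_\Lambda / \mathcal{R}_n) \rtimes \Lambda$ from Section~\ref{Sec:gen} (which is a Kirchberg algebra by Lemma~\ref{Lem:loc} together with Theorem~9 of~\cite{LaS} and is in the UCT class by Tu's theorem) and verifies the $K$-theoretic splitting of the inclusion directly. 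Once this geometric input is in place, the remainder of the argument is a routine combination of Phillips's classification with the UCT.
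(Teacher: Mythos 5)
Your proposal is correct and takes essentially the same route as the paper, which likewise combines the left-invertibility of $\iota_\ast$ established in the proof of Theorem \ref{Thmint:NE} with Phillips's Theorem 4.1 and the UCT (the paper leaves the argument at the level of the remark preceding the corollary). Your extra caution about the $d=\infty$ case --- verifying that the ambient crossed product is a unital Kirchberg algebra in the UCT class before invoking classification --- is a point the paper glosses over, but it does not alter the argument.
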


\section{Consequences to amenable minimal Cantor systems of free groups}\label{Sec:app}
Let $d$ be a natural number greater than $1$.
Again let $S$ denote the set of canonical generators of $\mathbb{F}_d$.
Let $\mathfrak{F}\subset S$ be a nonempty proper subset.
Then a similar proof to Theorem \ref{Thm:K} shows that the space $\partial \mathbb{F}_d/\mathcal{R}_{\mathfrak{F}}$ is
the Cantor set and $K_\ast(C(\partial\mathbb{F}_d/\mathcal{R}_{\mathfrak{F}})\rtimes\mathbb{F}_d)$
is isomorphic to $(\mathbb{Z}^d, (1, 0, \ldots, 0), \mathbb{Z}^d)$.
(The set
$\{ [1]_0, [p[s]]_0, [q[t]]_0: s\in S\setminus \mathfrak{F}, t\in \mathfrak{F}'\}$
is a basis of the $K_0$-group for any subset $\mathfrak{F}'$ of $\mathfrak{F}$ with the cardinality $\sharp\mathfrak{F}-1$.
We remark that the equality $(d-\sharp \mathfrak{F}-1)[1]_0=-\sum_{s\in \mathfrak{F}}[q[s]]_0$ holds
in the $K_0$-group.)

The classification theorem of Kirchberg and Phillips \cite{Kir, Phi} shows that
the crossed products of the dynamical systems $\varphi_{\mathfrak{F}}\colon \mathbb{F}_d\curvearrowright \partial\mathbb{F}_d/\mathcal{R}_{\mathfrak{F}}$ are mutually isomorphic for nonempty subsets $\mathfrak{F}$ of $S$.
Moreover, these crossed products are isomorphic to a
Cuntz--Krieger algebra \cite[Lemma 3.7]{MM}.

Recall that two minimal topologically free actions on the Cantor set
is continuously orbit equivalent if and only if their transformation groupoids are
isomorphic as $\acute{{\rm e}}$tale groupoids \cite[Def.5.4]{Suz}. See \cite{Mat} and \cite{Suz} for relevant topics.
We will show that they are not continuously orbit equivalent.
Hence we obtain examples of amenable minimal Cantor $\mathbb{F}_d$-systems
which are not continuously orbit equivalent but have isomorphic crossed products.

Let $\alpha\colon \Gamma\curvearrowright X$ be an action of a group on the Cantor set.
Recall that the topological full group $[[\alpha]]$ of $\alpha$
is the group consisting homeomorphisms $h$ on $X$
satisfying the following condition.
For any $x\in X$, there is a neighborhood $U$ of $x$ and $g\in \Gamma$
such that $h(y)=g.y$ for all $y\in U$.
\begin{Thm}\label{Thm:CE}
Let $\mathfrak{F}$ and $\mathfrak{F}'$ be nonempty subsets of $S$.
Then $\varphi_{\mathfrak{F}}$ and $\varphi_{\mathfrak{F}'}$ are continuously orbit equivalent
if and only if $\sharp \mathfrak{F}=\sharp \mathfrak{F}'$.
\end{Thm}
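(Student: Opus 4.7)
The plan is to handle the two directions separately, with the nontrivial direction using a dynamical invariant of the transformation groupoid. For the sufficiency, I would start from a bijection $\pi\colon \mathfrak{F}\to \mathfrak{F}'$, extend it to a permutation of $S$, and then to an automorphism $\hat{\pi}$ of $\mathbb{F}_d$. Since $\hat{\pi}$ preserves the word length, it induces a self-homeomorphism $\bar{\pi}$ of $\partial \mathbb{F}_d$ satisfying $\bar{\pi}(g.x)=\hat{\pi}(g).\bar{\pi}(x)$ and mapping $\mathcal{R}_{\mathfrak{F}}$ onto $\mathcal{R}_{\mathfrak{F}'}$. Passing to the quotients gives a homeomorphism that intertwines the two actions through $\hat{\pi}$, and the constant cocycle $c(g,[x]):=\hat{\pi}(g)$ witnesses continuous orbit equivalence.

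For the converse, since $\varphi_{\mathfrak{F}}$ and $\varphi_{\mathfrak{F}'}$ are minimal topologically free actions on the Cantor set, continuous orbit equivalence is equivalent to an isomorphism of the transformation groupoids $G_\mathfrak{F}:=\mathbb{F}_d\ltimes (\partial \mathbb{F}_d/\mathcal{R}_\mathfrak{F})$ and $G_{\mathfrak{F}'}$. I therefore search for a groupoid-isomorphism invariant of $G_\mathfrak{F}$ equal to $\sharp \mathfrak{F}$. My candidate is the number of $\mathbb{F}_d$-orbits of \emph{bi-attracting singular points}, where a point $[x]$ of the unit space with nontrivial isotropy is called bi-attracting if some (equivalently every) generator $s$ of the isotropy group ${\rm Iso}([x])$ acts on a sufficiently small clopen neighborhood of $[x]$ as a local homeomorphism for which $[x]$ is an attracting fixed point \emph{for both} forward and backward iteration. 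This notion is intrinsic to the étale groupoid: under a groupoid isomorphism a generator of the isotropy is carried to a $\pm 1$-power of a generator of the target isotropy, so that the local dynamics of $s$ near $[x]$ are conjugated to the local dynamics at $\phi([x])$.

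To compute the invariant, I would first argue that, because no element of $\mathbb{F}_d$ conjugates a primitive element to its inverse, no nontrivial element can swap the two points of a $2$-element $\mathcal{R}_\mathfrak{F}$-class; hence every singular point of $\partial \mathbb{F}_d/\mathcal{R}_\mathfrak{F}$ is the image of some $t^{\pm \infty}$ with $t$ primitive, and its isotropy is $\langle t\rangle$. The orbit of $[t^{+\infty}]$ is bi-attracting precisely when $[t^{+\infty}]=[t^{-\infty}]$ in the quotient, since this is exactly when the repelling fixed point $t^{-\infty}$ of $t$ on $\partial \mathbb{F}_d$ is identified with $t^{+\infty}$ and the backward iterates also accumulate at $[t^{+\infty}]$. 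By definition of $\mathcal{R}_\mathfrak{F}$, this occurs iff $t$ is conjugate in $\mathbb{F}_d$ to some element of $\mathfrak{F}\cup\mathfrak{F}^{-1}$. Since distinct generators of $\mathbb{F}_d$ are pairwise non-conjugate, and since the orbits of $[t^{+\infty}]$ for $t$ conjugate to $s$ and for $t$ conjugate to $s^{-1}$ coincide (because $[s^{+\infty}]=[s^{-\infty}]$ when $s\in \mathfrak{F}$), the bi-attracting orbits are in natural bijection with $\mathfrak{F}$, yielding exactly $\sharp \mathfrak{F}$ of them.

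The main obstacle will be making the invariance of the bi-attracting property fully rigorous under an étale groupoid isomorphism $\Phi\colon G_\mathfrak{F}\to G_{\mathfrak{F}'}$. Writing $\Phi(g,y)=(c(g,y),\phi(y))$ for the induced homeomorphism $\phi$ of unit spaces and the continuous cocycle $c$, the value $c(s,[x])$ must generate ${\rm Iso}(\phi([x]))$ because $\Phi$ restricted to isotropies is a group isomorphism. Continuity of $c$ forces $c(s,\cdot)$ to be locally constant on a clopen neighborhood of $[x]$, and then the iterates $s^n$ on this neighborhood correspond under $\phi$ to iterates of a fixed generator of the target isotropy, so that the attracting/repelling behavior transfers verbatim.
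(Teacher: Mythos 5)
Your overall strategy coincides with the paper's: the ``only if'' direction is proved by exhibiting a groupoid invariant that counts the $\mathbb{F}_d$-orbits of those singular points at which the attracting and repelling fixed points of the stabilizing primitive element have been glued together, and your computation that there are exactly $\sharp\mathfrak{F}$ such orbits agrees with the paper's identification of the set $Y=\{[gw^{+\infty}]: g\in\mathbb{F}_d,\ w\in\mathfrak{F}\}$. The ``if'' direction via a permutation of the generators is fine (the paper leaves it implicit), and your remark that no element of $\mathbb{F}_d$ conjugates a primitive element to its inverse, so that no group element swaps the two points of a $2$-element class, is a useful detail the paper glosses over.

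The gap sits exactly where you flag the main obstacle, and your proposed resolution does not close it. You argue that since $c(s,\cdot)$ is locally constant near $[x]$, the iterates $s^n$ on that neighborhood correspond under $\phi$ to iterates of a fixed generator of the target isotropy. But $\phi(s^n.y)=c(s^n,y).\phi(y)$ with $c(s^n,y)=c(s,s^{n-1}.y)\cdots c(s,y)$, and this product equals $c(s,[x])^n$ only if the entire orbit segment $y, s.y,\ldots,s^{n-1}.y$ remains in the neighborhood on which $c(s,\cdot)$ is constant. At a bi-attracting point $[w^{+\infty}]=[w^{-\infty}]$ this fails for precisely the points that witness bi-attraction: a point $y$ coming from the repelling sheet near $w^{-\infty}$ is expelled from every small neighborhood before returning toward $[w^{+\infty}]$, so the intermediate cocycle values are uncontrolled and the forward $\langle s\rangle$-orbit of $y$ need not correspond to the forward $\langle t\rangle$-orbit of $\phi(y)$; the property does not ``transfer verbatim.'' There are two standard repairs. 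The paper's: replace your pointwise condition by the condition that some $F$ in the topological full group $[[\varphi_{\mathfrak{F}}]]$ admits $[x]$ as an isolated fixed point while neither $(F^{n})_{n=1}^\infty$ nor $(F^{-n})_{n=1}^\infty$ converges uniformly to the constant map on any neighborhood of $[x]$; since the topological full group is an invariant of the transformation groupoid and the condition concerns $F$ only as a homeomorphism of the unit space, invariance is automatic. Alternatively, run your argument in the contrapositive: at a non-bi-attracting singular point one can choose a clopen $V$ with $v.{\rm cl}(V)\subset V$, so forward orbits never leave $V$, the cocycle computation is valid there, and uniform one-sided contraction transfers under $\phi$; as uniform one-sided contraction is incompatible with bi-attraction, the dichotomy is preserved. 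Either repair salvages your invariant, but as written the key invariance step would fail.
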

\begin{proof}
Let $\mathfrak{F}$ be a nonempty subset of $S$
and denote by $[y]$ the element of $\partial \mathbb{F}_d /\mathcal{R}_{\mathfrak{F}}$ represented by $y\in \partial \mathbb{F}_d$.
Set
$$X:=\left\{x\in \partial\mathbb{F}_d/\mathcal{R}_\mathfrak{F}: {\rm there\ is\ }g\in \mathbb{F}_d\setminus \{e\}{\rm\ with\ }g.x=x \right\}.$$
Notice that the definition of $X$ only depends on the structure of the
transformation groupoid
$\partial\mathbb{F}_d/\mathcal{R}_{\mathfrak{F}}\rtimes \mathbb{F}_d$.
It is clear from the definition that
$$X=\{[w^{+\infty}]: w\in \mathbb{F}_d\setminus\{e\}\}.$$ 
For each $x\in X$, consider the following condition.
\begin{itemize}
\item[$(\ast)$]There is $F \in [[\varphi_{\mathfrak{F}}]]$
such that $x$ is an isolated point of the set of fixed points of $F$,
and both $(F^{n})_{n=1}^\infty$ and  $(F^{-n})_{n=1}^\infty$
do not uniformly converge to the constant map $y\mapsto x$ on any neighborhood of $x$.
\end{itemize}
Then it is easy to check that
$$Y:=\left\{x\in X: x{\rm\ satisfies\ the\ condition\ }(\ast)\right\}=\left\{[gw^{+\infty}]: g\in \mathbb{F}_d, w\in \mathfrak{F}\right\}.$$
Now the cardinality of $\mathfrak{F}$
is recovered as the number of the $\mathbb{F}_d$-orbits in $Y$.
\end{proof}
\begin{Rem}
It follows from Matui's theorem \cite[Theorem 3.10]{Mat} and Theorem \ref{Thm:CE}
that the topological full groups of $\varphi_{\mathfrak{F}}$ and $\varphi_{\mathfrak{F}'}$ are
not isomorphic when $\sharp\mathfrak{F}\neq \sharp\mathfrak{F}'$.
\end{Rem}

Now consider a one-sided irreducible finite shift $\sigma_A$ with 
$K_\ast(\mathcal{O}_A)\cong (\mathbb{Z}^d, (1, 1,\ldots, 1), \mathbb{Z}^d)$.
(Such one exists \cite[Lemma 3.7]{MM} and is unique up to continuously orbit equivalence \cite[Theorem 3.6]{MM}.)
Then, thanks to the classification theorem of Kirchberg and Phillips \cite{Kir, Phi},
for each nonempty subset $\mathfrak{F}$ of $S$, the crossed product $C(\partial\mathbb{F}_d/\mathcal{R}_{\mathfrak{F}})\rtimes\mathbb{F}_d$ is isomorphic to the Cuntz--Krieger algebra $\mathcal{O}_A$.
Thus the Cartan subalgebra $C(\partial\mathbb{F}_d/\mathcal{R}_{\mathfrak{F}})\subset
C(\partial\mathbb{F}_d/\mathcal{R}_{\mathfrak{F}})\rtimes\mathbb{F}_d$
provides a Cartan subalgebra of $\mathcal{O}_A$ whose spectrum is the Cantor set.
On the other hand,
the transformation groupoid $G_A$ of $\sigma_A$ (see \cite[Section 2.2]{MM} for instance) does not admit
a point satisfying the condition $(\ast)$ stated in the proof of Theorem \ref{Thm:CE}.
Thus our Cartan subalgebras are not conjugate to the canonical one $C(X_A)\subset \mathcal{O}_A$ \cite[Prop. 4.13]{Ren} (see also \cite[Theorem 5.1]{Mat0}).

\subsection*{Acknowledgement}
The author would like to thank Hiroki Matui,
who turns the author's interest to amenable quotients of the boundary actions.
He also thanks Narutaka Ozawa
for helpful discussions on hyperbolic groups and approximation theory.
He also thanks Caleb Eckhardt who raised a question about
properties of the decreasing intersection of nuclear \Cs-algebras with conditional expectations.
Finally, he is grateful to the referee, whose comments and suggestions
improve the presentation of the paper.
He was supported by Research Fellow
of the JSPS (No.25-7810) and the Program of Leading Graduate Schools, MEXT, Japan.

\end{document}